\newtheorem{The}{Theorem}[section]
\newtheorem{lem}[The]{Lemma}
\newtheorem{prop}[The]{Proposition}
\newtheorem{Cor}[The]{Corollary}
\theoremstyle{definition}
\theoremstyle{remark}
\newtheorem{Rk}[The]{Remark}
\numberwithin{equation}{section}
\newcommand{\rrvert}{\vert}
\newcommand{\llvert}{\vert}
\title{
\normalsize
\textbf{{
ARM EXPONENT 
FOR
THE GAUSSIAN FREE FIELD ON METRIC GRAPHS IN 
INTERMEDIATE
DIMENSIONS}}}
\author{}
\date{}
\renewcommand{\phi}{\varphi}
\renewcommand{\epsilon}{\varepsilon}
\definecolor{Red}{rgb}{1,0,0}
\definecolor{Blue}{rgb}{0,0,1}
\definecolor{Olive}{rgb}{0.41,0.55,0.13}
\definecolor{Yarok}{rgb}{0,0.5,0}
\definecolor{Green}{rgb}{0,1,0}
\definecolor{MGreen}{rgb}{0,0.8,0}
\definecolor{DGreen}{rgb}{0,0.55,0}
\definecolor{Yellow}{rgb}{1,1,0}
\definecolor{Cyan}{rgb}{0,1,1}
\definecolor{Magenta}{rgb}{1,0,1}
\definecolor{Orange}{rgb}{1,.5,0}
\definecolor{Violet}{rgb}{.5,0,.5}
\definecolor{Purple}{rgb}{.75,0,.25}
\definecolor{Brown}{rgb}{.75,.5,.25}
\definecolor{Grey}{rgb}{.7,.7,.7}
\definecolor{Black}{rgb}{0,0,0}
\begin{document}
\thispagestyle{empty}
\maketitle
\vspace{0.1cm}
\begin{center}
\vspace{-1.9cm}
Alexander Drewitz$^1$, Alexis Pr\'evost$^2$ and Pierre-Fran\c cois Rodriguez$^{3,4}$

\end{center}
\vspace{0.1cm}
\begin{abstract}
\centering
\begin{minipage}{0.85\textwidth}
We investigate the bond percolation model on transient weighted graphs
${G}$ induced by the excursion sets of the Gaussian free field on the corresponding
metric graph. We assume that balls in ${G}$ have polynomial volume growth
with growth exponent $\alpha $ and that the Green's function for the random
walk on ${G}$ exhibits a power law decay with exponent $\nu $, in the regime
$1\leq \nu \leq \frac{\alpha}{2}$. In particular, this includes the cases
of ${G}=\mathbb{Z}^{3}$ for which $\nu =1$, and
${G}= \mathbb{Z}^{4}$ for which $\nu =\frac{\alpha}{2}=2$. For all such
graphs, we determine the leading-order asymptotic behavior for the critical
one-arm probability, which we prove decays with distance $R$, like
$R^{-\frac{\nu}{2}+o(1)}$. Our results are, in fact, more precise and yield
logarithmic corrections when $\nu >1$ as well as corrections of order
$\log \log R$ when $\nu =1$. We further obtain very sharp upper bounds
on truncated two-point functions close to criticality, which are new when
$\nu >1$ and essentially optimal when $\nu =1$. This extends previous results
from \cite{DrePreRod5, DiWi}.
\end{minipage}
\end{abstract}

\vspace{5.5cm}
\begin{flushleft}

\noindent\rule{5cm}{0.4pt} \hfill October 2025 \\
\bigskip
\begin{multicols}{2}

$^1$Universit\"at zu K\"oln\\
Department Mathematik/Informatik \\
Weyertal 86--90 \\
50931 K\"oln, Germany. \\
\url{adrewitz@uni-koeln.de}\\[2em]

$^2$Institute for Applied Mathematics\\
University of Bonn\\
Endenicher Allee 60\\
D-53115 Bonn, Germany.
\\\url{prevost@iam.uni-bonn.de}\\[2em]

\columnbreak
\thispagestyle{empty}
\bigskip
\medskip
\hfill$^3$Imperial College London\\
\hfill Department of Mathematics\\
\hfill 180 Queen's Gate\\
\hfill  SW7 2AZ London, UK\\
\hfill \\[2em]
\hfill$^3$Center for Mathematical Sciences\\
\hfill University of Cambridge\\
\hfill Wilberforce Road\\
\hfill  CB3 0WB Cambridge, UK \\
\hfill \url{pfr26@cam.ac.uk} 
\end{multicols}
\end{flushleft}

\newpage

\section{Introduction}
\label{sec:intro}

Percolation models exhibit intriguing behavior at and near their critical
point, which is notoriously difficult to describe rigorously. Among many
quantities of interest, one which plays a central role in this article
is the ``one-arm'' probability to connect a point to distance
$R\geq 1$. For Bernoulli site percolation on the two-dimensional triangular
lattice, this observable was famously shown \cite{MR1887622,Smi01} to decay
polynomially in $R$ at criticality as $R^{-\frac{5}{48} +o(1)}$. On
$\mathbb{Z}^{d}$ for sufficiently high dimension $d$ (namely,
$d\geq 11$), the decay is known to be of order $R^{-2}$ independently of
$d$ \cite{KozNac11,10.1214/17-EJP56,HarSla90,Bar-Aiz91}, a manifestation
of mean-field behavior; see also \cite{HarHofSla03} concerning spread-out
models for which this regime has been proven to extend to all $d>6$. In
intermediate dimensions $d=3,4,\dots $, however, the mere decay of the
critical one-arm probability in itself is an outstanding open problem.

Recently, a bond percolation model with long-range correlations involving
the Gaussian free field, which belongs to a different universality class
than Bernoulli percolation, has led to significant advances on questions
of the above type, notably in the challenging intermediate dimensions
\cite{DiWi,DrePreRod5,DrePreRod3,MR3502602,werner2020clusters,cai2023onearm}.
We summarize these below; see also
\cite{Lupu22,jego2023crossing,MR4091511,zbMATH07047497,DrElPrTyVi} for
related results, including in dimension two. One focus of the present article
is the one-arm decay of this model at criticality in intermediate dimensions.
Another central quantity of interest is the truncated two-point function
near criticality for which we derive very sharp upper bounds. Together,
these results have important ramifications concerning the behavior of critical
and near-critical cluster volumes. Moreover, in the special case of
$\mathbb{Z}^{3}$, they refine rather drastically recent results of
\cite{GRS21}, which concern a different (but related) model. Indeed, the
quantitative two-point estimates we derive witness rotational invariance
\textit{at} the correlation length scale.

Our results hold under certain mild conditions on the base graph, which
originate in \cite{MR1853353} (see also \cite{DrePreRod2} in the context
of percolation for the Gaussian free field) and are not specific to Euclidean
lattices. We consider $\mathcal{G}=(G, \lambda )$ a transient weighted
graph, connected and locally finite. We assume controlled weights, that
is,
%
\begin{align}
& \label{eq:ellipticity} \tag{$p_{0}$} \lambda _{x,y}/\lambda
_{x}\geq c\quad \text{for all }x, y\in{G} \text{ s.t. $\lambda _{x,y}>0$,}
\end{align}
for some $c>0$, where
$\lambda _{x} \stackrel{\mathrm{def.}}{=} \sum_{y \sim x} \lambda _{x,y}$
for $x \in G$. We further impose two natural conditions on the growth of
balls and the decay of the Green's function for the random walk on
$\mathcal{G}$. Namely, there exist a positive exponent $\alpha $ and
$ c,C \in (0 ,\infty )$ such that the volume growth condition
\begin{equation}
\label{eq:intro_sizeball} \tag{$V_{\alpha}$} cr^{\alpha}\leq \lambda
\bigl(B(x,r) \bigr)\leq Cr^{\alpha}\quad \text{ for all }x
\in{G}\text{ and }r\geq 1
\end{equation}
is satisfied, where $B(x,r)$ refers to the discrete ball of radius
$r$ around $x \in G$ in a given metric $d$ on $G$ and $\lambda $ denotes
the measure induced via the point masses $\lambda _{x}$ for
$x \in G$. Moreover, there exist an exponent $\nu >0$ and constants
$ c,C \in (0 ,\infty )$ such that
\begin{equation}
\label{eq:intro_Green} \tag{$G_{\nu}$} %
\begin{aligned} c\leq
g(x,x) \leq C \quad \text{and}\quad c d(x,y)^{-\nu}\leq g(x,y)
\leq C d(x,y)^{-\nu} \quad \text{for all }x \neq y\in{G},
\end{aligned} %
\end{equation}
where
$g(x,y) \stackrel{\mathrm{def.}}{=} \sum_{n=0}^{\infty }P_{x}(Z_{n}=y)
\frac{1}{\lambda _{y}}$, $x,y \in G$, denotes Green's function for the
random walk $(Z_{n})$ in conductances $\lambda $ on $G$ for which
$P_{x}(Z_{n}=0)=x$; see Section~\ref{sec:pre} for further details. As explained
around \cite{DrePreRod5}, (1.18), in case $d=d_{\mathrm{gr}}$, where
$d_{\mathrm{gr}}$ is the graph distance on $\mathcal{G}$, these conditions
imply that $0<\nu \leq \alpha -2$. We assume from now on that these bounds
on $\nu $ are satisfied. An emblematic example of graphs satisfying these
conditions are the Euclidean lattices $G= \mathbb{Z}^{\alpha}$ for integer
$\alpha \geq 3$ endowed with weights
$\lambda _{x,y}= \tfrac1{2d}\{ |x-y|=1 \}$, $x,y \in G$; here $|\cdot |$ denotes
the Euclidean distance, and this setting fits the above setup with
$\nu = \alpha -2$. We always tacitly assume these choices of weights when referring to $G=\mathbb{Z}^{\alpha}$ in the sequel. Whereas $\nu $ thus ranges over the integer values
$ \{1,2,\dots \}$ on the lattice, intermediate values are also attainable. Examples fulfilling the above conditions include
%
\begin{equation}
\label{intro:Gex1} %
\begin{aligned} G_{1} &=
\mathbb{Z}^{d}\quad \text{with } d \geq 3,
\\
G_{2} &= G' \times \mathbb{Z}\quad
\text{with $G'$ the discrete skeleton of the Sierpinski gasket},
\\
G_{3} &= \text{the standard $d$-dimensional graphical Sierpinski carpet for
$d\geq 3$,}
\\
G_{4}&= %
\begin{aligned} &\text{a Cayley graph of a finitely generated group
$\Gamma = \langle S \rangle $ with $S=S^{-1}$}
\\[-3pt]
&\text{having polynomial volume growth of order $\alpha >2$}; \end{aligned} \end{aligned} %
\end{equation}
see \cite{MR2076770} and also \cite{DrePreRod2} for further details. Although
the examples mentioned above often entail additional symmetry, no assumptions
beyond \eqref{eq:intro_sizeball} and \eqref{eq:intro_Green} (along with
controlled weights) are required, in particular, no transitivity assumption
needs to be imposed.

Let ${\varphi}$ denote the Gaussian free field on ${\mathcal{G}}$, that
is, the centered Gaussian process with covariance function given by
$g$, and write $\mathbb{P}$ for the underlying probability measure. The
excursion sets $\{x \in G: \varphi _{x} \geq a\}$ for varying parameter
$a \in \mathbb{R}$ lead to a natural \textit{site} percolation model on
$G$ and have been extensively studied; see,~for example,~\cite{MR914444,MR3053773,DrRo-13,MR3417515,DrePreRod,GRS21,MR4568695,muirhead2022percolation}.
A variation with improved integrability properties is obtained by considering
instead the \textit{bond} percolation model, which is obtained by retaining
each edge $\{x,y\}$ in $G$ independently with probability
%
\begin{equation}
\label{eq:bond-proba} 1-\exp \bigl\{-2\lambda _{x,y}(\varphi
_{x}-a)_{+} (\varphi _{y}-a)_{+}
\bigr\}, \quad a \in \mathbb{R}, 
\end{equation}
conditionally on $\varphi $, where $t_{+} =\max \{t,0\}$ for
$t \in \mathbb{R}$. We assume $\mathbb{P}$ to be suitably extended to carry
this additional randomness. An edge is called open if it has been retained.
Let $E^{\geq a}$ denote the set of open edges of $\mathcal{G}$ obtained
in this way. In view of \eqref{eq:bond-proba}, by a straightforward coupling
involving uniform random variables, one sees that $E^{\geq a}$ is decreasing
in $a$, thus rendering the phase transition for the percolation problem
$(E^{\geq a})_{a\in \mathbb{R}}$ well defined. What lurks behind the choice
of the bond disorder \eqref{eq:bond-proba} is an extension of
$\varphi $ to the metric graph $\widetilde{\mathcal{G}} \supset G$, comprising
$G$ along with one-dimensional cables joining neighboring vertices in
$G$. The percolation problem for $(E^{\geq a})_{a\in \mathbb{R}}$ is equivalent
to the percolation problem for excursion sets of the continuous extension
of $\varphi $ to $\widetilde{\mathcal{G}}$; see Section~\ref{sec:pre} for
details.

As a consequence of \cite{DrePreRod3}, Theorem~1.1(1), which is applicable
due to Lemma~3.4(2) therein and our assumption \eqref{eq:intro_Green},
one knows the following:
\begin{itemize}
\item[{(i)}] The percolation model $(E^{\geq a})_{a\in \mathbb{R}}$ has a nontrivial
phase transition on any graph $\mathcal{G}$ satisfying the above requirements.
\item[{(ii)}] The critical height is $a=0$ regardless of the particular choice
of $\mathcal{G}$.
\item[{(iii)}] The order parameter of the transition (i.e.,~the probability of a
point to belong to an infinite cluster in $E^{\geq a}$) is a continuous
function of $a$.
\end{itemize}

Our first result concerns the critical ``one-arm'' events
\begin{equation*}
\bigl\{ x \leftrightarrow B(x,R)^{\mathsf{c}} \bigr\},
\end{equation*}
which refer to the existence of a path in $E^{\geq 0}$ connecting
$0$ and $B(x,R)^{\mathsf{c}}=G\setminus B(x,R)$. To date, the best available
bounds on the critical one-arm probability
$\mathbb{P}(x \leftrightarrow B(x,R)^{\mathsf{c}})$ are as follows. By
\cite{DrePreRod5}, (1.22)--(1.23), one knows that, for all $\nu >0$,
$x\in{G}$, and $R \geq 1$, and some $c,C \in (0,\infty )$ (see the end
of this Introduction regarding our convention with constants),
%
\begin{align}
\label{eq:1-arm_LB}
\mathbb{P} \bigl(x \leftrightarrow B(x,R)^{\mathsf{c}} \bigr) \geq cR^{-
\frac{\nu }{2}},
\end{align}
and
%
\begin{align}
&
\label{eq:1-arm_UB-old}
\mathbb{P} \bigl(x \leftrightarrow B(x,R)^{\mathsf{c}} \bigr) \leq C
\begin{cases}
R^{-\frac {\nu }{2}} & \text{if } \nu <1,
\\
\biggl(\frac{R}{\log R} \biggr)^{-\frac{1}{2}} & \text{if }
\nu =1,
\\
R^{-\frac{1}{2}} & \text{if } \nu >1.
\end{cases}
\end{align}
The bounds \eqref{eq:1-arm_LB}--\eqref{eq:1-arm_UB-old} were first derived
in \cite{DiWi} for $G=\mathbb{Z}^{\alpha}$, with
$\alpha =\nu +2 (\geq 3)$, via different methods. Later on, in the general
graph setting of \cite{DrePreRod5}, they were seen to follow immediately
from a comparison between the radius and capacity observables for the cluster
of a point, using that the latter is integrable; see
\cite{DrePreRod5}, Corollary~1.3. In fact, the proof of
\eqref{eq:1-arm_LB}--\eqref{eq:1-arm_UB-old} does not even require the
assumption \eqref{eq:intro_sizeball}. The bounds \eqref{eq:1-arm_LB}--\eqref{eq:1-arm_UB-old}
thus express little more than the fact that the cluster of $x$ reaching
distance $R$ can be anything from a line to the full box. When
$\nu <1$, the two are indistinguishable to leading order in $R$ when measured
in terms of their capacity, but discrepancies start to arise when
$\nu \geq 1$, which corresponds to dimension three and higher on Euclidean
lattices. In the mean-field regime of $\mathbb{Z}^{\alpha}$ for any integer
$\alpha >6$, it was recently proved in \cite{cai2023onearm} that
$\mathbb{P} (x \leftrightarrow B(x,R)^{\mathsf{c}} )\asymp R^{-2}$;
see also \cite{werner2020clusters} for more results valid in high dimensions.

The mismatch between \eqref{eq:1-arm_LB} and \eqref{eq:1-arm_UB-old} for
$\nu \geq 1$ warrants further investigation, in particular, in the regime
of ``intermediate'' dimensions, for example, for integer $\alpha $ with
$3 \leq \alpha \leq 6$ on $\mathbb{Z}^{\alpha}$, which is the object of
our first main result. From here on, we impose the standing assumption
%
\begin{equation}
\label{eq:restr-params} 1\leq \nu \leq \frac{\alpha}{2}. 
\end{equation}
In particular, \eqref{eq:restr-params} includes the cases
$G=\mathbb{Z}^{\alpha}$, $\alpha \in \{3,4\}$, for which
$\nu =\alpha -2 (\leq \frac{\alpha}{2})$, but also, for example,~certain
fractal graphs from \eqref{intro:Gex1}, as will be mentioned below. Our first result is an improved
upper bound on the one-arm probability at criticality.

\begin{The}%
\label{T:1arm-crit}
There exists $C\in (0,\infty )$ such that, for all $x\in{G}$ and
$R \geq 3$,
%
\begin{align}
\label{eq:1-arm-UB-crit}
\mathbb{P}  \bigl(x \leftrightarrow B(x,R)^{\mathsf{c}}  \bigr) \leq Cq(R) R^{-
\frac{\nu }{2}},
\end{align}
where
%
\begin{equation}
\label{eq:const-UB-1arm-crit} q(R)= %
\begin{cases} \log \log R &
\text{if }\nu =1,
\\
(\log R)^{\frac{1}{2}(\nu -1)}(\log \log R)^{\nu} & \text{if }1<
\nu < \frac{ \alpha}{2},
\\
(\log R)^{\nu}(\log \log R)^{\nu} & \text{if } \nu
=\frac{ \alpha}{2}. \end{cases} %
\end{equation}
In particular, for such $\alpha$ and $\nu$ the critical one-arm exponent satisfies
%
\begin{equation}
\label{eq:def_rho} \frac{1}{\rho} \stackrel{\mathrm{def.}} {=} -\lim
_{R\rightarrow
\infty} \frac{\log \mathbb{P}(x \leftrightarrow B(x,R)^{\mathsf{c}})}{\log R} = \frac{\nu}{2}.
\end{equation}
\end{The}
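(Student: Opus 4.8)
\emph{Reduction.} The matching lower bound \eqref{eq:1-arm_LB} is already available and $q(R)=R^{o(1)}$, so once \eqref{eq:1-arm-UB-crit} is proved, \eqref{eq:def_rho} is immediate; thus the plan is to establish the upper bound \eqref{eq:1-arm-UB-crit}. I would first pass to the cable system: up to replacing $R$ by a constant multiple, $\{x\leftrightarrow B(x,R)^{\mathsf c}\}$ becomes the event that $x$ is connected to distance $R$ inside the excursion set $\{\tp\ge0\}$ of the cable field on $\tilde{\G}$; write $\mathcal{C}$ for the resulting cluster of $x$. The starting point is the capacity functional technology of \cite{DrePreRod5}: conditionally on $\{\phi_x>0\}$ the law of $\mathrm{cap}(\mathcal{C})$ admits an explicit, Brownian-type description, which yields the two-sided tail $\P(\mathrm{cap}(\mathcal{C})\ge t)\asymp t^{-1/2}$ for $t\ge1$ and, more importantly here, a conditional version of it given the restriction of $\tp$ to a ball; morally, the level of $\tp$ along $\mathcal{C}$ at ``capacity-scale'' $s$ is of order $\sqrt s$. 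The bound \eqref{eq:1-arm_UB-old} comes from noting that on $\{x\leftrightarrow B(x,R)^{\mathsf c}\}$ the cluster $\mathcal{C}$ is connected of diameter $\ge R$, so $\mathrm{cap}(\mathcal{C})\ge c\,\ell(R)$ with $\ell(R)\asymp R$ (resp.\ $R/\log R$) for $\nu>1$ (resp.\ $\nu=1$), and then applying the tail bound at $t=c\,\ell(R)$. This is lossy because, by \eqref{eq:intro_Green}, a cluster confined to $B(x,R)$ has $\mathrm{cap}(\mathcal{C})\le C\,\mathrm{cap}(B(x,R))\asymp R^\nu$, and the whole point is that reaching distance $R$ with $\mathrm{cap}(\mathcal{C})$ much smaller than $R^\nu$ should be rare.

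\emph{Multiscale analysis.} Accordingly, the core of the argument will be to show that for $\ell(R)\le t\le R^\nu$,
\[
\P\big(x\leftrightarrow B(x,R)^{\mathsf c},\ \mathrm{cap}(\mathcal{C})\le t\big)\ \le\ C\,\sigma(R,t)\,t^{-1/2}
\]
for a prefactor $\sigma(R,t)\le1$ small enough that summing over a dyadic range of thresholds $t\in[\ell(R),R^\nu]$, and adding $\P(\mathrm{cap}(\mathcal{C})\ge R^\nu)\le CR^{-\nu/2}$, produces a total $\le Cq(R)R^{-\nu/2}$. To prove the displayed bound I would run a scale analysis. Put $r_k=2^k$ for $0\le k\le n:=\lfloor\log_2R\rfloor$, let $\mathcal{C}_k=\mathcal{C}\cap B(x,r_k)$, $T_k=\mathrm{cap}(\mathcal{C}_k)$, so that $T_0\le\cdots\le T_n=\mathrm{cap}(\mathcal{C})$, and, on the event above, $c\,\ell(r_k)\le T_k\le C(r_k^\nu\wedge t)$ for every $k$. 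Decompose according to the dyadic value of the ``capacity profile'' $(T_k)_k$ --- only polylogarithmically many choices --- and for each fixed profile bound the probability by iterating the domain Markov property of $\tp$ across the annuli $B(x,r_{k+1})\setminus B(x,r_k)$: conditionally on $\tp$ near $B(x,r_k)$, the field outside is a free field on $\tilde{\G}\setminus B(x,r_k)$ plus the harmonic extension of its boundary data, and $\mathcal{C}$ reaches $\partial B(x,r_{k+1})$ exactly when the ``source'' $\mathcal{C}_k\cap\partial B(x,r_k)$, on which $\tp\ge0$, is joined to $\partial B(x,r_{k+1})$ inside the annulus. Using Lupu's isomorphism, I would control this conditional crossing probability --- given that the cluster so far has capacity $\le T$ and that $\tp$ on the source is at its typical level $\asymp\sqrt T$ --- by a quantity of schematic size $C(\log r_k)^{O(1)}(T/r_k^\nu)^{1/2}$ when $T\le r_k^\nu$. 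Chaining these bounds along the scales while tracking the level of $\tp$ on $\mathcal{C}$ at each sphere (so that the per-annulus bounds combine correctly rather than as a crude independent product), summing over profiles, and finally optimizing over the profile subject to $c\,\ell(r_k)\le T_k\le C(r_k^\nu\wedge t)$ should give the displayed bound; tracking the optimal profile carefully should yield the precise $q(R)$ in \eqref{eq:const-UB-1arm-crit}, the three cases reflecting whether the active constraint along the optimizer is $T_k\asymp\ell(r_k)$, $T_k\asymp r_k^\nu$, or, for $\nu=\tfrac\alpha2$, a companion constraint linking $\mathrm{cap}(B(x,r_k))\asymp r_k^\nu$ with the volume $\lambda(B(x,r_k))\asymp r_k^{\alpha}=r_k^{2\nu}$.

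\emph{Main obstacle and conclusion.} The hard part will be the conditional annulus-crossing estimate and the way the per-scale bounds are combined: crossings at different scales are correlated, so a naive product is not legitimate, and one needs a form of the estimate that carries along --- via the capacity/Brownian representation --- the level of $\tp$ on $\mathcal{C}$ at each sphere; the $\log\log$-scale fluctuations of that level are precisely what generate the $(\log\log R)^\nu$ correction (and the bare $\log\log R$ when $\nu=1$). I also expect the standing assumption $\nu\le\tfrac\alpha2$ in \eqref{eq:restr-params} to enter here, guaranteeing that a cluster of capacity $\asymp r^\nu$ fits comfortably inside $B(x,r)$ --- its equilibrium measure having bounded density, by \eqref{eq:intro_sizeball} --- which is what makes the conditional crossing estimate tight; the same toolkit (scale analysis plus isomorphism) is what should also deliver the sharp truncated two-point function bounds mentioned in the abstract. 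Finally, combining \eqref{eq:1-arm-UB-crit} with \eqref{eq:1-arm_LB} gives $\tfrac\nu2\le-\log\P(x\leftrightarrow B(x,R)^{\mathsf c})/\log R\le\tfrac\nu2+\log q(R)/\log R\to\tfrac\nu2$, which is \eqref{eq:def_rho}.
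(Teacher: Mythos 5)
Your proposal takes a genuinely different route from the paper, and as sketched it contains gaps that I do not think can be filled in the way you anticipate.

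The paper does not run a dyadic multiscale decomposition over capacity profiles. Instead it works at a \emph{single} scale $L$ with $R = C(\ell+1)L$ and $\ell$ only polylogarithmic in $L$: it introduces $\ell$ disjoint annuli $\mathbb{A}_1,\dots,\mathbb{A}_\ell$ inside $B_R$, and, crucially, exploits the \emph{loop-soup} picture (Le Jan's isomorphism on the cable system, \eqref{eq:iso-LS}) rather than the domain Markov property of the field. In that picture, one declares a loop ``big'' if $\mathrm{cap}(\gamma)\geq\delta L^\nu(\log L)^{-\btwo}$; if the cluster $\mathcal{C}$ of the origin is to cross $B_R$ without accumulating large capacity, then $\mathcal{C}$ must avoid the big loops in almost every annulus $\mathbb{A}_k$ (Lemma~\ref{L:cap-LB}). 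The whole argument then rests on Proposition~\ref{P:N-bad}: the probability that many annuli are simultaneously avoided decays as $\exp\{-c\delta\ell/(\log L)^{\btwo}\}$. This exponential decay is obtained by letting the union $\widetilde{\mathcal{I}}_D^b$ of big loops in the bad annuli play the role of a $(L,R,n,\kappa)$-good obstacle set (Lemma~\ref{L:fpp-ann}), and then using the \emph{restriction property} \eqref{eq:LS-rest} of the loop soup to make the remaining loops independent of the obstacle; Lemma~\ref{lem:hittingobstacle} and Corollary~\ref{cor:2pointkilled} then give the exponential bound. The $\log\log$ factors in $q(R)$ arise from the optimization of the parameters $(\delta,\rho,\ell)$ in \eqref{eq:pf-Th1-params}, not from fluctuations of the level of the field on $\mathcal{C}$.

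The differences matter for correctness, not just for style. Your proposal's central difficulty --- ``crossings at different scales are correlated, so a naive product is not legitimate'' --- is precisely what the paper's loop-soup route removes by fiat: loops confined to disjoint annuli are independent, so the awkward conditioning you contemplate (on boundary data and on the ``level of $\tp$ on $\mathcal{C}$ at each sphere'') is avoided entirely. Your sketch leaves the chaining step as the thing ``the hard part will be'', and I do not see how to make it precise: after conditioning on the field near $B(x,r_k)$ one has a free field on $\tilde{\G}\setminus B(x,r_k)$ \emph{plus} a nontrivial harmonic extension, and the conditional crossing probability in the annulus is not a function of $\mathrm{cap}(\mathcal{C}_k)$ and a scalar ``level'' alone. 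More concretely, the claim that the decomposition over capacity profiles $(T_k)_{k\le n}$ involves ``only polylogarithmically many choices'' appears to be a counting error: with $n\asymp\log R$ scales and each $T_k$ taking one of $\asymp\log R$ dyadic values, even enforcing monotonicity one has $\binom{n+M}{n}$ monotone profiles, which is polynomial in $R$ rather than polylogarithmic; this entropy would wipe out the per-profile gain you are after. The paper's counting is of a very different nature: it is a union bound over subsets $D\subset\{1,\dots,\ell\}$ with $|D|\geq(1-\rho)\ell$, costing $e^{C\rho\log(1/\rho)\ell}$, which is deliberately made small by the choice of $\rho$.

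Finally, your heuristic for why \eqref{eq:restr-params} matters (``a cluster of capacity $\asymp r^\nu$ fits comfortably inside $B(x,r)$'') is related to but different from the paper's. The paper needs $\nu\le\alpha/2$ so that a random walk trajectory (a big loop, in the proof of Theorem~\ref{T:1arm-crit}; an interlacement piece, in the proof of Theorem~\ref{thm:2pointUB}) confined to a ball of radius $L$ has capacity comparable to that of the ball, up to a $\log$ in the boundary case $\alpha=2\nu$; see the choice of $\delta L^\nu(\log L)^{-\btwo}$ in \eqref{eq:small-loop} and the use of \cite[Lemma 5.3]{DrePreRod5} in the proof of Lemma~\ref{L:loop-intensities}. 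This ``capacity thickening by random-walk pieces'' is the key mechanism, and it does not appear in your sketch. In summary, the high-level intuition that reaching scale $R$ should force $\mathrm{cap}(\mathcal{C})$ close to $R^\nu$ is shared, but the execution is different in an essential way, and the multiscale chaining you propose has unresolved gaps (correlation across scales, entropy of profiles, conditional crossing estimate) that the paper's loop-soup/obstacle approach is specifically designed to circumvent.
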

Thus, in comparison with \eqref{eq:1-arm_LB}--\eqref{eq:1-arm_UB-old},
Theorem~\ref{T:1arm-crit} yields an improvement in the discrepancy between
upper and lower bounds: first, for $\nu =1$, we obtain an improvement of
the upper bound from a factor of $(\log R)^{\frac{1}{2}}$ to
$\log \log R$, which, in particular, applies to the lattice
$\mathbb{Z}^{3}$; see Remark~\ref{rk:simplerproof} for more details. Even
more distinctively, in the regime $1< \nu \leq \frac{ \alpha}{2}$, Theorem~\ref{T:1arm-crit}
implies that the polynomial lower bound \eqref{eq:1-arm_LB} is, in fact,
sharp up to logarithmic factors. For instance, $\mathbb{Z}^{4}$ corresponds
to the case $\nu = \frac{\alpha}{2}=2$; an example with
$1< \nu < \frac{\alpha}{2}$ is the graphical Sierpinski carpet in
$\alpha =4$ dimensions; see \cite{MR1802425} and
\cite{DrePreRod2}, Remark~3.10,2). This sharpness up to logarithmic factors
implies that the limit in \eqref{eq:def_rho} exists and defines a critical
one-arm exponent.

There is, however, no specific reason to believe that the upper bounds
we derive are sharp up to constants. In fact, we prove slightly more than
\eqref{eq:const-UB-1arm-crit}. For instance, in case $\nu =1$, the proof
yields \eqref{eq:1-arm-UB-crit} with
$q(R)= (\log \log R)^{\frac{2}{3}+ \varepsilon}$ for any
$\varepsilon > 0$; see \eqref{eq:end-Th1} for best available bounds in
all cases.

The strategy we follow in order to obtain \eqref{eq:1-arm-UB-crit}--\eqref{eq:const-UB-1arm-crit}
consists of deriving substantial refinements of previous estimates for
the ``thickness'' of the cluster of $x$ when measured in terms of capacity.
This ``capacity thickening'' is due to the underlying presence of random
walk-like objects within this cluster, the capacity of which is particularly
large under the assumption \eqref{eq:restr-params}. We will return to this
below. As mentioned above, on $\mathbb{Z}^{\alpha}$ for (integer)
$\alpha >6$, it was proved in \cite{cai2023onearm} that $\rho =1/2$, and
noting that $2/\nu =1/2$ exactly when $\alpha =6$ (and hence,
$\nu =4$), we conjecture that \eqref{eq:def_rho} remains true on
$\mathbb{Z}^{\alpha}$ for $\alpha \in{\{5,6\}}$.

Theorem~\ref{T:1arm-crit} has further important consequences, which we
now discuss. Indeed, this is because our improved bounds for
$q(\cdot )$ from \eqref{eq:1-arm-UB-crit} and
\eqref{eq:const-UB-1arm-crit} feed into the expressions for the lower bound
on the near-critical one-arm probability and two-point function obtained
in \cite{DrePreRod5}, Proposition~6.1 and (8.3). We detail this in the
latter case. For each $a\in{\mathbb{R}}$ and $x,y\in{G}$, let
%
\begin{equation}
\label{eq:2-point}
\tau _{a}^{\mathrm{tr}}(x,y)= \tau _{a}^{\mathrm{tr}}(y,x)
\stackrel{\text{def.}}{=}\mathbb{P} \bigl(x\leftrightarrow y\text{ in }E^{
\geq a},x\nleftrightarrow \infty \text{ in }E^{\geq a} \bigr)
\end{equation}
denote the truncated two-point function, where ``$x\leftrightarrow y$ in $A$'' means that there is a path of edges in
$A$ from $x$ to $y$. Then under the additional hypothesis that
$d=d_{\mathrm{gr}}$, one obtains the following: there exist constants
$c,\Cl{Ctaulower} \in (0,\infty )$ such that, for all
$a\in{\mathbb{R}}$ with $|a|\leq c$ and all $x,y\in{G}$ with
$d(x,y)\geq \Cr{Ctaulower}|a|^{-2}q(a^{-1})\log (q(a^{-1}))$ when
$\nu =1$, and with
$d(x,y)\geq \Cr{Ctaulower}|a|^{-\frac{2}{\nu}} q(a^{-1}) $ when
$1<\nu < \frac{\alpha}{2}$, one has%
%
\begin{equation}
\label{eq:lowerboundtau} \tau _{a}^{\mathrm{tr}}(x,y)\geq \tau
_{0}^{\mathrm{tr}}(x,y) %
\begin{cases}
\exp \biggl\{-\frac{\Cr{Ctaulower} \llvert a \rrvert ^{2}d(x,y)}
{\log ( \llvert a \rrvert ^{2}d(x,y))} \biggr\}&\text{if }\nu =1,
\\[6pt]
\exp \bigl\{-\Cr{Ctaulower} \llvert a \rrvert ^{\frac{2}{\nu}}d(x,y) \bigl(\log
\bigl( \llvert a \rrvert ^{
\frac{2}{\nu}}d(x,y) \bigr) \bigr)^{\nu -1}
\bigr\}&\text{if }1<\nu < \frac{\alpha}{2}. \end{cases}
%
\end{equation}
We refer to the end of Section~\ref{sec:2point} for the short derivation
of these bounds, which follow from Theorem~\ref{T:1arm-crit} in combination
with \cite{DrePreRod5}, (8.3). Note, however, that, when
$\nu =\frac{\alpha}{2}$, one cannot improve on the bound of
\cite{DrePreRod5}, Theorem~1.7, for reasons explained in
\cite{DrePreRod5}, Remark~8.1,1). Regarding the one-arm probability,
\cite{DrePreRod5}, Proposition~6.1, combined with Theorem~\ref{T:1arm-crit} yields the exact same lower bounds as in
\eqref{eq:lowerboundtau}, but with $\tau _{a}^{\mathrm{tr}}(x,y)$ and
$ \tau _{0}^{\mathrm{tr}}(x,y)$ replaced by the near-critical and critical
one-arm probabilities as well as $d(x,y)$ replaced by the radius
$r$.

Our second main result Theorem~\ref{thm:2pointUB} yields an upper bound
on the truncated two-point function $\tau _{a}^{\mathrm{tr}}(x,y)$ from
\eqref{eq:2-point}, thereby assessing the sharpness of the lower bound
\eqref{eq:lowerboundtau} (up to $\log $ corrections when $\nu >1$) as well
as the resulting lower bound on the correlation length; compare~the discussion
around \eqref{eq:defxi} below. In order to put this into context, we recall
that, to date, the best upper bound was proved in
\cite{DrePreRod5}, Theorem~1.4: there exists $\Cl[c]{ctauupper}>0$ such
that, for all $a\in{\mathbb{R}}$ and $x,y\in{G}$,
%
\begin{equation}
\label{eq:bound2point1} \tau _{a}^{\mathrm{tr}}(x,y)\leq \tau
_{0}^{\mathrm{tr}}(x,y) %
\begin{cases}
\exp \biggl\{-\frac{\Cr{ctauupper} \llvert a \rrvert ^{2}d(x,y)}{\log (d(x,y)) \vee 1} \biggr\}&\text{if }\nu =1,
\\[6pt]
\exp \bigl\{-{\Cr{ctauupper} \llvert a \rrvert ^{2}d(x,y)} \bigr
\}&\text{if }\nu >1; \end{cases} %
\end{equation}
see also \cite{DiWi}, Theorem~4, for a similar result in the case
$G=\mathbb{Z}^{\alpha}$, albeit without prefactor
$\tau _{0}^{\mathrm{tr}}(x,y)$. To be precise,
\eqref{eq:bound2point1} only requires $\mathcal{G}$ to have controlled
weights and to satisfy \eqref{eq:intro_Green}. Let us also remark here
that, in the case $\nu <1$, one already has matching upper and lower bounds;
see \cite{DrePreRod5}, Theorem~1.4, by which
$\log (\tau _{a}^{\mathrm{tr}}(x,y)/\tau _{0}^{\mathrm{tr}}(x,y))$ behaves
like $-(|a|^{\frac{2}{\nu}}d(x,y))^{\nu}$. Our second main result improves
the bound \eqref{eq:bound2point1} in the regime of parameters
$1\leq \nu \leq \frac{\alpha}{2}$ from \eqref{eq:restr-params}.

\begin{The}
\label{thm:2pointUB}
There exist $\Cl[c]{ctwopoint}, \Cl{Ctwopoint} \in (0,\infty )$ such that,
for all $a\in{\mathbb{R}}$ and $x,y\in{G}$,
%
\begin{equation}
\label{eq:bound2point} \tau _{a}^{\mathrm{tr}}(x,y)\leq \Cr{Ctwopoint}
\tau _{0}^{\mathrm{tr}}(x,y) %
\begin{cases} \exp \biggl\{- \frac{\Cr{ctwopoint} \llvert a \rrvert ^{2}d(x,y)}{\log ( \llvert a \rrvert ^{2}d(x,y)) \vee 1} \biggr\}& \text{if }\nu
=1,
\\[10pt]
\exp \bigl\{-{\Cr{ctwopoint} \llvert a \rrvert ^{\frac{2}{\nu}}d(x,y)} \bigr
\}& \text{if }1<\nu <\frac {\alpha}{2},
\\
\exp \biggl\{- \frac{\Cr{ctwopoint} \llvert a \rrvert ^{\frac{2}{\nu}}d(x,y)}{\log ( \llvert a \rrvert ^{-1})\vee 1} \biggr\}& \text{if }\nu =
\frac {\alpha}{2}. \end{cases} %
\end{equation}
\end{The}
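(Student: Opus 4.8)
\textbf{Proof plan for Theorem \ref{thm:2pointUB}.}

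The plan is to use the same strategy that underlies the upper bound \eqref{eq:bound2point1} from \cite[Theorem~1.4]{DrePreRod5}, but fed with the sharper input coming from the capacity-thickening phenomenon behind Theorem~\ref{T:1arm-crit}. Recall that on the metric graph $\tilde\G$ the event $\{x\leftrightarrow y$ in $E^{\geq a}$, $x\not\leftrightarrow\infty\}$ forces the existence of a sign cluster of $\{\tilde\varphi=0\}$ (or, after the usual isomorphism-theorem bookkeeping, a loop-soup/second-moment object) that separates $x$ from $y$; the standard route is to condition on the cluster $\K$ of $x$, use the strong Markov property of $\tilde\varphi$ along $\partial\K$, and estimate the conditional probability that the field reaches $a$-level connections across the annular region between $x$ and $y$ in terms of $\mathrm{cap}(\K)$. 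The key quantitative mechanism is that a cluster realizing the truncated two-point function at distance $d(x,y)=:D$ is, with overwhelming probability, \emph{thick}: its capacity inside $B(x,\ell)$ is at least of order $\ell^{?}$ times the trivial (single-path) bound, for $\ell$ up to $D$, precisely because of the random-walk-like strands it contains, whose capacity is maximal exactly in the regime \eqref{eq:restr-params}. This is the same estimate that improves $R^{-1/2}$ to $q(R)R^{-\nu/2}$ in Theorem~\ref{T:1arm-crit}; here it should be recycled at every dyadic scale $2^k\leq D$ to produce an exponential cost in the number of scales, i.e.\ a cost exponential in $\log D$, with the $|a|$-dependence entering through the correlation-length scale $|a|^{-2/\nu}$ (resp.\ $|a|^{-2}$ when $\nu=1$) below which the thickening is not yet effective.

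Concretely I would proceed as follows. First, reduce to the regime $D\geq \Cr{ctwopoint}^{-1}|a|^{-2/\nu}$ (for $\nu>1$) or $D\geq \Cr{ctwopoint}^{-1}|a|^{-2}\log^{+}$ (for $\nu=1$): below that, \eqref{eq:bound2point1} already gives \eqref{eq:bound2point} with room to spare, up to the constant $\Cr{Ctwopoint}$. Second, partition the annulus between $B(x,|a|^{-2/\nu})$ and $B(x,D/2)$ into $\asymp\log(|a|^{2/\nu}D)$ dyadic shells $S_k=B(x,2^{k+1})\setminus B(x,2^k)$. On the event defining $\tau_a^{\mathrm{tr}}(x,y)$, in each shell the cluster of $x$ must cross $S_k$; running the conditional (on $\partial(\text{cluster so far})$) estimate of \cite{DrePreRod5} together with the capacity lower bound for a crossing cluster restricted to $S_k$ — which in the regime \eqref{eq:restr-params} is of order $2^{k\nu}$ up to the relevant logarithmic factor, \emph{not} just $2^{k(\nu-1)}$ as a single path would give — one gains, relative to the trivial estimate, a multiplicative factor $\exp\{-c\,|a|^{2/\nu}\cdot(\text{length of }S_k)\}=\exp\{-c\,|a|^{2/\nu}2^{k}\}$ from the "volume$\times$gap$^2$" term, which is already in \eqref{eq:bound2point1}; the new point is the \emph{absence} of the compensating $2^{-k(\nu-1)}$ loss per scale (hence no $\log D$ in the denominator) in the range $1<\nu<\alpha/2$, and the presence of only a $\log(|a|^{-1})$ (rather than $\log D$) denominator when $\nu=\alpha/2$, reflecting that at $\nu=\alpha/2$ the cluster is as thick as the ambient box so the thickening saturates at the correlation-length scale. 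Third, sum the per-scale gains over $k$ with $2^k$ ranging from $|a|^{-2/\nu}$ to $D$; the geometric sum is dominated by its largest term and yields $\exp\{-c|a|^{2/\nu}D\}$ for $1<\nu<\alpha/2$, $\exp\{-c|a|^{2/\nu}D/(\log|a|^{-1}\vee1)\}$ for $\nu=\alpha/2$, and $\exp\{-c|a|^{2}D/(\log(|a|^2D)\vee1)\}$ for $\nu=1$ (where the single surviving $\log$ comes from the one scale of shells). Fourth, restore the prefactor $\tau_0^{\mathrm{tr}}(x,y)$ by an FKG/finite-energy comparison exactly as in \cite{DrePreRod5}: the critical configuration on $B(x,D)$ and the near-critical decoupled estimate on the complement can be combined at the cost of a constant $\Cr{Ctwopoint}$, since the thickening argument only consumes a bounded fraction of the available events at each scale.

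The main obstacle is making the per-scale capacity-thickening estimate \emph{conditionally} uniform: one must show that, conditionally on the (highly correlated, random) cluster of $x$ up to scale $2^k$, the further excursion into the shell $S_k$ still carries capacity of the enhanced order $2^{k\nu}$ (times the $q$-type logarithmic factor) with probability not too small — and crucially that these conditional gains multiply cleanly across the $\asymp\log D$ scales without the error terms compounding. This is where the restriction $\nu\leq\alpha/2$ is essential (beyond it the relevant random-walk trace would no longer have near-maximal capacity, and the argument breaks exactly as flagged for $\nu=\alpha/2$ in \cite[Remark~8.1]{DrePreRod5}), and it is also where one has to be careful that the second-moment/Chebyshev control on the capacity of the crossing piece — which is what Theorem~\ref{T:1arm-crit}'s proof supplies at a single scale — survives the conditioning; I expect this to require re-running the capacity second-moment computation localized to each shell, with the Green's function bounds \eqref{eq:intro_Green} and volume bounds \eqref{eq:intro_sizeball} entering to control the off-diagonal contributions, and then a union bound over the $O(\log D)$ scales absorbed into the constants.
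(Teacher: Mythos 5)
Your proposal identifies a plausible heuristic (capacity thickening in the regime $\nu\leq\alpha/2$ driving an improvement over the old bound) but misses the central mechanism the paper actually uses, and the route you propose has a gap you yourself flag without resolving.

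The paper does \emph{not} improve the two-point bound by conditioning on the cluster of $x$ at successive dyadic scales and re-running a capacity second-moment computation per shell. Instead it brings in \emph{random interlacements} via Sznitman's isomorphism (Lemma~\ref{lem:couplingwithinter}): the near-critical level-$a$ cluster $\tilde{\K}^{\sqrt{2u}}$, with $u=a^2/2$, is coupled so as to be dominated by the sign cluster of a GFF \emph{killed} on the interlacement set $\tilde{\I}^u$. This is where the dependence on $a$ enters --- through the interlacement intensity $u$ --- and it is crucial, because it turns the problem into one where the obstacle $\mathcal{O}=\I^u$ is an \emph{independent}, explicitly analyzable random set, sidestepping exactly the conditioning difficulty you describe as "the main obstacle" of making the per-scale thickening conditionally uniform across $O(\log D)$ scales. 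The bound \eqref{eq:bound2pointviacoupling} reduces matters to estimating $\P_{\tilde{\V}^u}(0\leftrightarrow x)$ on the event that $\I^u$ is a $(L,R,n,\kappa)$-good obstacle, which is controlled by the first-passage-percolation input from \cite{prevost2023passage} (giving \eqref{eq:boundM}); Corollary~\ref{cor:2pointkilled} then delivers the exponential factor $\exp\{-c\kappa n/L^\nu\}$, and the specific choices $L\asymp u^{-1/\nu}$, $n\asymp Ru^{1/\nu}$, $\kappa\asymp u^{-1}(\log u^{-1})^{-\btwo}$ yield the three cases in \eqref{eq:bound2point}.

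Concretely, the gap in your proposal is twofold. First, you propose to obtain the $a$-dependence from a "volume$\times$gap$^2$" cost already present in \eqref{eq:bound2point1}, combined with "absence of the $2^{-k(\nu-1)}$ loss per scale"; but you never explain how the cluster-of-$x$ capacity improvement (which Theorem~\ref{T:1arm-crit} supplies only at criticality, unconditionally, at a single outer scale) should translate into a per-scale multiplicative gain in the near-critical truncated two-point function --- the relevant random object for the improvement is not the cluster of $x$ itself but the ambient near-critical killing set, and you never introduce it. Second, the multiplication of per-scale gains across $\asymp\log D$ shells is precisely where conditioning on the highly correlated cluster history would have to be controlled; the paper avoids this entirely by making the obstacle $\I^u$ independent of the auxiliary field $\gamma$ in the coupling, and then using a Markov-chain (strong Markov at successive hitting and exit times) decomposition as in Lemma~\ref{lem:hittingobstacle}, not a second-moment argument per shell. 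Without a replacement for this independence, your scheme does not close, and your own description of where it might fail is where it in fact does.

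Finally, two smaller mismatches: the $\nu=\alpha/2$ case's $\log(|a|^{-1})$ denominator comes quantitatively from the allowed $\kappa$ in the good-obstacle estimate for interlacements, not from a vague "saturation at the correlation-length scale"; and the $\nu=1$ denominator $\log(|a|^2 d(x,y))$ arises from the $\bone$-exponent in the FPP bound \eqref{eq:boundM}, not from "the one scale of shells."
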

In view of Theorem~\ref{thm:2pointUB}, the bounds
\eqref{eq:lowerboundtau} and \eqref{eq:bound2point} now exactly match up
to constants, when $\nu =1$, and match up to logarithmic correction in
$|a|^{\frac{2}{\nu}}d(x,y)$ when $1<\nu <\frac{\alpha}{2}$. We believe
that our upper bound \eqref{eq:bound2point} is sharp, that is, one can
remove the term $(\log (|a|^{\frac{2}{\nu}}d(x,y)))^{\nu -1}$ in
\eqref{eq:lowerboundtau}; we hope to return to this elsewhere. More importantly,
as opposed to \eqref{eq:bound2point1}, the upper bounds in
\eqref{eq:bound2point} are functions of $|a|^{\frac{2}{\nu}}d(x,y)$ for
all $ 1 \leq \nu < \frac{\alpha}{2}$. Thus, combining
\eqref{eq:lowerboundtau}, \eqref{eq:bound2point}, and
\cite{DrePreRod5}, Theorem~1.7, for the lower bound, when
$\nu =\frac{\alpha}{2}$, and defining
%
\begin{equation}
\label{eq:defxi} \xi \equiv \xi (a)\stackrel{\text{def.}} {=} \llvert a
\rrvert ^{-\frac{2}{\nu}} \quad \text{(with $\xi (0)\stackrel{\text{def.}}{=} \infty $)}, 
\end{equation}
this entails that $\xi $ is the length scale after which the ratio
$\tau _{a}^{\mathrm{tr}}/\tau _{0}^{\mathrm{tr}}$ starts to decay to zero rapidly
(with corrections of order $\log (\xi )$ when
$\nu =\frac{\alpha}{2}$). In particular, it follows that the critical exponent
$\nu _{c}$ associated to the correlation length is equal to
$\frac{2}{\nu}$ for all $1\leq \nu \leq \frac{\alpha}{2}$, which extends
the results from \cite{DrePreRod5}, (1.28). Let us also emphasize that,
contrary to \eqref{eq:bound2point1} or the bounds in case $\nu <1$ mentioned
below \eqref{eq:bound2point1} (cf.~also the disconnection results of
\cite{MR3417515}), the upper bounds in \eqref{eq:bound2point} have a functional
dependence on the parameter $a$, which is \textit{not} Gaussian (not even
when $\nu =1$).

When $G=\mathbb{Z}^{3}$, following ideas from
\cite{GRS21} (see also \cite{MuiSev} for related results), one can actually
strengthen this even more and (almost) match the constants
$\Cr{Ctaulower}$ from \eqref{eq:lowerboundtau} and $\Cr{ctwopoint}$ from
\eqref{eq:bound2point}. More precisely, denoting by ${S}^{2}$ the two-dimensional
unit sphere and abbreviating for each $x\in{\mathbb{R}^{3}}$ by
$[x]$ the vertex closest to $x$ in $\mathbb{Z}^{3}$ (with some arbitrary
choice if $x$ is equidistant to two or more vertices of
$\mathbb{Z}^{3}$), the following result is proved at the end of Section~\ref{sec:2point}.

\begin{Cor}
\label{cor:2pointZ3}
Let $G=\mathbb{Z}^{3}$. There exists $c\in (0,\infty )$, and for all
$\eta \in{(0,1)}$, there exists $C=C(\eta ) \in (0,\infty )$ such that,
for all $e\in{S^{2}}$, $a\in{\mathbb{R}}$ with $|a|\leq c$, and all
$\lambda \geq C$,
%
\begin{equation}
\label{eq:2pointZ3} -\frac{\pi}{6}(1+\eta )\frac{\lambda}{\log (\lambda )}-C\log \log
\xi \leq \log \biggl( \frac{\tau ^{\mathrm{tr}}_{a}(0,[\lambda \xi e])}{\tau ^{\mathrm{tr}}_{0}(0,[\lambda \xi e])} \biggr)\leq -\frac{\pi}{6}(1-
\eta )\frac{\lambda}{\log (\lambda )}. 
\end{equation}
\end{Cor}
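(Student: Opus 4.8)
The plan is to combine the general upper and lower bounds from Theorems \ref{T:1arm-crit} and \ref{thm:2pointUB} with a sharpening of the constants specific to $\Z^3$, following the strategy of \cite{GRS21}. Recall that in the regime $\nu = 1$ (which is precisely the case $G = \Z^3$) the truncated two-point function is controlled, up to the prefactor $\tau_0^{\mathrm{tr}}$, by $\exp\{-c|a|^2 d(x,y)/\log(|a|^2 d(x,y))\}$ from above (Theorem \ref{thm:2pointUB}) and by $\exp\{-C|a|^2 d(x,y)/\log(|a|^2 d(x,y))\}$ from below (the bound \eqref{eq:lowerboundtau}). Writing $d(0,[\lambda\xi e]) = \lambda\xi(1+o(1)) = \lambda|a|^{-2}(1+o(1))$ since $\xi = |a|^{-2}$ when $\nu=1$, we have $|a|^2 d(0,[\lambda\xi e]) = \lambda(1+o(1))$ and $\log(|a|^2 d(0,[\lambda\xi e])) = \log\lambda + o(\log\lambda)$ for $\lambda$ large (uniformly in $|a|\le c$ small). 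So both bounds already take the shape $\exp\{-\Theta(\lambda/\log\lambda)\}$; the content of the corollary is to pin the constant to $\pi/6$.

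\medskip

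The first step is to identify $\pi/6$ as the relevant constant. On $\Z^3$ with unit weights the Green's function satisfies $g(x,y) \sim \frac{3}{2\pi}|x-y|^{-1}$ (equivalently, in the normalization consistent with \eqref{eq:intro_Green}, the relevant capacity/energy constant is the Newtonian one in $\R^3$). The heuristic, made rigorous in \cite{GRS21} for the closely related model there, is that conditionally on $\{0 \leftrightarrow [\lambda\xi e]\}$ in $E^{\ge a}$ but not to infinity, the dominant cost of the connection at level $a$ versus level $0$ comes from forcing a Brownian-like path (the metric-graph cable trajectory carrying the cluster) to stay in the excursion set $\{\varphi \ge a\}$ along a tube of length $\sim d(0,[\lambda\xi e])$; the exponential rate is governed by the principal Dirichlet eigenvalue problem for the generator of this path conditioned on an $h$-transform, and optimizing the tube geometry yields the constant $\pi/6$ (this is where $S^2$ enters — the direction $e$ is irrelevant by the asymptotic rotational invariance of the Green's function at the correlation-length scale, which is exactly the ``rotational invariance at the correlation length'' advertised in the introduction). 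Concretely, I would transcribe the computation of \cite[Section ...]{GRS21}: the surviving probability of the relevant diffusion in a slab or cylinder of cross-sectional capacity profile, after the logarithmic time-change $t \mapsto t/\log t$ coming from the $\nu=1$ capacity-of-a-line estimate, produces rate $\frac{\pi}{6}\lambda/\log\lambda$.

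\medskip

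For the upper bound in \eqref{eq:2pointZ3} I would run the second-moment / capacity argument underlying Theorem \ref{thm:2pointUB} but keep track of constants: the Markov-type inequality controlling $\tau_a^{\mathrm{tr}}/\tau_0^{\mathrm{tr}}$ is driven by the probability that an excursion-set cluster joining $0$ to $[\lambda\xi e]$ has its cable content confined; inserting the sharp $\Z^3$ Green's function constant into the isoperimetric/capacity estimate for such a confined cluster gives the factor $\frac{\pi}{6}(1-\eta)$, with the $\eta$ absorbing the $o(1)$ corrections in $d(0,[\lambda\xi e]) = \lambda\xi(1+o(1))$ and in $\log(|a|^2 d(0,[\lambda\xi e]))/\log\lambda$. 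For the lower bound I would exhibit an explicit strategy: build a connection through a cable trajectory contained in a tube of the optimal shape between $0$ and $[\lambda\xi e]$, staying above level $a$; the FKG inequality and the Gaussian computation (entropy/energy of the corresponding $h$-transform of $\varphi$) lower-bound the conditional probability by $\exp\{-\frac{\pi}{6}(1+\eta)\lambda/\log\lambda\}$ times the unconditioned connection probability, with the additive error $-C\log\log\xi$ coming from the cost of attaching the two endpoints of the tube to $0$ and $[\lambda\xi e]$ — this is exactly the $q(\cdot) = \log\log$ correction in the $\nu=1$ one-arm bound of Theorem \ref{T:1arm-crit}, which bounds the ``endpoint gluing'' cost.

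\medskip

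The main obstacle I anticipate is transferring the precise eigenvalue/entropy computation of \cite{GRS21} — which was carried out for their (different but related) model — to the metric-graph GFF bond percolation model \eqref{eq:bond-proba}, in particular verifying that the logarithmic time-change and the constant $\pi/6$ survive intact. Closely tied to this is proving the claimed uniformity in $e \in S^2$, i.e.\ that no direction-dependent constant appears; this requires the $o(1)$ error in the Green's function asymptotics $g(x,y) = \frac{3}{2\pi}|x-y|^{-1}(1+o(1))$ to be genuinely isotropic at scale $\xi = |a|^{-2} \to \infty$, which is classical for $\Z^3$ but must be threaded through the whole argument. The additive $\log\log\xi$ slack in the lower bound is what buys us the room to be sloppy about these endpoint and boundary corrections; the delicate part is making sure the multiplicative constant in the leading term is genuinely $\pi/6(1\pm\eta)$ and not merely $\Theta(1)$.
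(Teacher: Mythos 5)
Your high-level plan is on the right track: sharpen the constants in Theorems \ref{T:1arm-crit} and \ref{thm:2pointUB} using the $\Z^3$-specific structure, following the strategy of \cite{GRS21}, with the $\log\log\xi$ slack absorbing the endpoint-attachment cost from the one-arm bound. However, you have misidentified where $\pi/6$ comes from, and the remaining steps that you flag as your ``main obstacle'' are in fact the entire content of the corollary, so the proposal is a plan rather than a proof.

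The constant $\pi/6$ is not extracted from a principal Dirichlet eigenvalue or an $h$-transform of the cable diffusion. The paper derives it potential-theoretically: the ratio $\tau_a^{\mathrm{tr}}/\tau_0^{\mathrm{tr}}$ is governed, to leading exponential order, by $\exp\{-a^2\,\mathrm{cap}(\L_\ell)/2\}$, where $\L_\ell$ is a discrete solid tube from $0$ to $[Re]$ of length $R$ and width $\ell\asymp\xi$, and the sharp $\Z^3$ tube-capacity asymptotics (from \cite[Lemma~2.1]{prevost2023passage}, or \cite[Lemma~2.1]{dembo2022capacity}) give $\mathrm{cap}(\L_\ell)\sim \tfrac{\pi}{3}\,R/\log(R/\ell)$. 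Writing $R=\lambda\xi=\lambda a^{-2}$ produces $\tfrac12\cdot\tfrac\pi3\cdot\lambda/\log\lambda=\tfrac\pi6\,\lambda/\log\lambda$; the $1/2$ is the usual Gaussian energy factor (equivalently $u=a^2/2$ in the isomorphism), and $\pi/3$ is the $\Z^3$ Green's function/capacity constant. Your ``log time-change'' heuristic captures the $\log$ but not the constant.

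Concretely, what is missing and must be supplied: for the upper bound, one must trace the constant $\Cr{ctwopoint2}$ in the good-obstacle probability bound \eqref{eq:boundM} back to the first-passage percolation estimate of \cite[Theorem~5.4]{prevost2023passage}, whose $\Z^3$ constant is exactly $\pi/3$ (your ``isoperimetric estimate for confined clusters'' is not how the paper obtains it). For the lower bound, one needs (i) the Markov-property decomposition and change of measure tilting $\varphi$ by $a\overline{h}_\ell$ (harmonic shift on the GFF, not an $h$-transform of the walk), yielding the explicit penalty $\exp\{-a^2\mathrm{cap}_U(\L''_\ell)/2\}$ as in \eqref{eq:proof2point3}; (ii) the sharp upper bound $\mathrm{cap}_{K\cup K'}(\L''_\ell)\le \tfrac\pi3(1+\eta)^2 R/\log(2R/\ell)$; and (iii) a three-fold interlacement construction to show the tilted connection probability \eqref{eq:proof2point4} is close to one. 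Your outline gestures at FKG and a Gaussian entropy bound but neither identifies the change-of-measure object nor the interlacement gluing step that makes \eqref{eq:proof2point4} precise, and without them the constant would remain $\Theta(1)$ rather than $\pi/6(1\pm\eta)$.

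Finally, a small point on rotational invariance: it does not require threading the isotropic Green's function expansion through the whole argument, because the tube $\L_\ell$ and its capacity estimate from \cite{prevost2023passage} are already direction-independent at the scale of the construction; the only place $e$ enters is in the placement of the tube's balls \eqref{eq:defLell}, and the capacity bound is uniform over directions.
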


The expected rotational invariance at criticality is manifested in
\eqref{eq:2pointZ3} by the fact that the bounds obtained are functions
of $\lambda $ alone (with $\log \log $ corrections) and do not depend on
the choice of $e\in{S^{2}}$. For the related (but harder) model where one
considers excursion sets of the discrete free field~\cite{MR914444,MR3053773},
bounds witnessing a degree of rotational invariance similar to
\eqref{eq:2pointZ3} were derived in \cite{GRS21}, but only asymptotically
in the limit $\lambda \rightarrow \infty $ for fixed parameter $a$. We
refer to \cite{Smi01,duminilcopin2020rotational} and references therein
for rotational invariance results at criticality when $\alpha =2$.

Summing the bounds \eqref{eq:lowerboundtau} and
\eqref{eq:bound2point} over all $y\in{G}$, one also obtains bounds on the
average volume of a (bounded) cluster at level $a \neq 0$, and one can
furthermore deduce from Theorem~\ref{T:1arm-crit} bounds on the tail of
the volume of the critical cluster. This generalizes results from
\cite{DrePreRod5}, Corollary~1.5 and~1.6, for $\nu \leq 1$ to the regime
$1< \nu <\frac{\alpha}{2}$, with improved logarithmic corrections when
$\nu =1$. Let $|K|$ denote the cardinality of a set $K\subset G$.

\begin{Cor}%
\label{cor:vol}
There exist $c, C \in (0,\infty )$ such that, for all $x\in{G}$ and
$n\geq 1$, denoting by $\mathcal{K}^{a}$ the open cluster of $x$ at level
$a$, under the assumption \eqref{eq:restr-params} on $\nu $ and
$\alpha $ and with $q(\cdot )$ as in \eqref{eq:const-UB-1arm-crit},
%
\begin{equation}
\label{eq:criticalvolume} \mathbb{P} \bigl( \bigl\llvert \mathcal{K}^{0}
\bigr\rrvert \geq n \bigr)\leq Cn^{-\frac{\nu}{2\alpha -\nu}}q(n). 
\end{equation}
Moreover, for all $a\neq 0$, if $\nu <\frac{\alpha}{2}$ and
$d=d_{\mathrm{gr}}$, one has
%
\begin{equation}
\label{eq:nearcriticalvolume} c \llvert a \rrvert ^{-\frac{2\alpha}{\nu}+2}\exp \bigl\{-Cq(\xi )
\bigr\}\leq \mathbb{E} \bigl[ \bigl\llvert \mathcal{K}^{a} \bigr
\rrvert 1 \bigl\{ \bigl\llvert \mathcal{K}^{a} \bigr\rrvert <\infty
\bigr\} \bigr]\leq C \llvert a \rrvert ^{-
\frac{2\alpha}{\nu}+2}, 
\end{equation}
and if $\nu <3$ is additionally fulfilled, then
%
\begin{equation}
\label{eq:gamma} \gamma \stackrel{\mathrm{def.}} {=}-\lim_{a\rightarrow 0}
\frac{\log \mathbb{E}[ \llvert \mathcal{K}^{a} \rrvert 1\{ \llvert \mathcal{K}^{a} \rrvert <\infty \}]}{\log  \llvert a \rrvert }= \frac{2\alpha}{\nu}-2. 
\end{equation}
\end{Cor}
For instance, \eqref{eq:gamma} applies when $\mathcal{G}$ is the four-dimensional
Sierpinski carpet; see \cite{DrePreRod2}, Remark~3.10,1), as to why. The
bound \eqref{eq:criticalvolume} can be deduced from Theorem~\ref{T:1arm-crit}
similarly as \cite{DrePreRod5}, Corollary~1.6, and it in particular implies
that the critical exponent $\delta $ (defined in such a way that
$n^{-1/\delta}$ controls the tail in \eqref{eq:criticalvolume}) is smaller
than or equal to $\frac{2\alpha}{\nu}-1$ when it exists. The bound
\eqref{eq:nearcriticalvolume} can be deduced from
\cite{DrePreRod5}, (8.3), and \eqref{eq:bound2point} similarly as
\cite{DrePreRod5}, Corollary~1.5, and \eqref{eq:gamma} follows readily
noting that $q(\xi )/\log (\xi )\rightarrow 0$ as
$\xi \rightarrow \infty $ when $\nu <3\wedge \frac {\alpha}{2}$; see
\eqref{eq:const-UB-1arm-crit}. If $\nu =\frac{\alpha}{2}$, one can still
bound the average near-critical volume in
\eqref{eq:nearcriticalvolume} from above by
$C|a|^{-\frac{2\alpha}{\nu}+2}(\log (|a|^{-1}))^{\alpha -\nu}$, and hence
$\gamma \geq \frac{2\alpha}{\nu}-2$ when it exists, but we cannot prove
a matching lower bound anymore since there is no bound similar to
\eqref{eq:lowerboundtau} currently available in this regime.

 We now comment on the proofs. A key role in deriving the upper bounds in
Theorems~\ref{T:1arm-crit} and~\ref{thm:2pointUB} is played by a certain
obstacle set $\mathcal{O}$ with ``good'' properties, introduced in its
general form in the paragraph leading to Lemma~\ref{lem:hittingobstacle}
below. The relevant obstacle set $\mathcal{O}$, which is a carefully chosen
part of $E^{\geq a}$, is different in each proof and described below. Incidentally,
obstacle sets with similar visibility requirements on the random walk (cf.~Lemma~\ref{lem:hittingobstacle}
below and \cite{sharpnessRI1}, (1.9)) were key in deriving the ``near-critical''
couplings for finite range interlacements, which were instrumental in the
recent proof of sharpness for the vacant set of random interlacements
\cite{sharpnessRI2,sharpnessRI1,sharpnessRI3}, another model with a similar
correlation structure as $(E^{\geq a})_{a \in \mathbb{R}}$. The interlacement
set will also play a role in this article, as explained below.

For the purpose of proving Theorem~\ref{T:1arm-crit}, the obstacle set
$\mathcal{O}$ will be made up of a selection of ``big'' loops stemming
from a loop soup at critical intensity; see \eqref{eq:pf-main1-2}. These
big loops are hard to avoid for the cluster of the origin and, when hit,
ensure that the latter is sufficiently ``fat'' when measured in terms of
its capacity. The pertinence of the loops stems from the metric version
of Le Jan's isomorphism \cite{zbMATH05757657,MR3502602}. A key input (see
Proposition~\ref{P:N-bad}) is a sufficiently strong quantitative control
guaranteeing the presence of big loops. This estimate is the main driving
force of the proof.

For the purposes of proving Theorem~\ref{thm:2pointUB}, the relevant obstacle
set $\mathcal{O}$ comprises pieces of interlacement trajectories in
$\mathcal{I}^{u}$, with $u=\frac{a^{2}}{2}$ and $a$ as appearing in Theorem~\ref{thm:2pointUB}.
Interlacements enter by means of Sznitman's isomorphism theorem
\cite{MR3492939}. They appear when $a \neq 0$ and witness the emergence
of an infinite cluster. Within the proof, interlacements allow, together
with a version of Lupu's formula involving killing on $\mathcal{O}$, to
control the two-point function when $\mathcal{O}$ is ``good;'' see, in
particular, \eqref{eq:bound2pointviacoupling} and Corollary~\ref{cor:2pointkilled}.
A key input to control the goodness features of $\mathcal{O}$ that eventually
give rise to the precise bounds in Theorem~\ref{thm:2pointUB} comes via state-of-the-art
coarse-graining techniques, quantitative in both the parameter $a$ and
the relevant spatial scales, which were developed and progressively refined
in \cite{MR3417515,GRS21,AndPre,prevost2023passage}, the last of which
will be applied here.

We now briefly comment on the relevance of the condition~\eqref{eq:restr-params}.
In the proofs of both Theorems~\ref{T:1arm-crit} and~\ref{thm:2pointUB},
the obstacle set $\mathcal{O}$ is composed of random walk-type objects,
either comprising big loops or pieces of interlacement trajectories. Consequently,
whenever a cluster of large radius intersects $\mathcal{O}$ at a vertex
$x\in{G}$, its capacity in the vicinity of $x$ is not only larger than
the capacity of a line (as needed to satisfy the large radius constraint),
but also than the capacity of a random walk. Whenever $\nu \geq 1$, the
capacity of the latter is typically much larger than the capacity of a
line: see, for example,~\cite{DrePreRod2}, Lemmas~4.4 and~A.1. This boosts
the capacity of this cluster upon intersection with $\mathcal{O}$. Furthermore,
when $\nu \leq \frac {\alpha}{2}$, the capacity of a random walk in a ball
is typically close to the capacity of this ball, and hence any cluster
with large radius locally resembles a ball if measured in terms of capacity
whenever it intersects $\mathcal{O}$. This key random walk input explains
our standing assumption \eqref{eq:restr-params}.

We now describe the organization of this article. Section~\ref{sec:pre}
introduces the framework, some notation, and the important notion of (good)
obstacle set $\mathcal{O}$. Section~\ref{sec:loops} gathers preliminary
estimates on the loop measure for certain sets of interest, at the level
of generality needed for the present article. In combination with the notion
of obstacle set developed in Section~\ref{sec:pre}, these estimates are
used in the proof of Theorem~\ref{T:1arm-crit}, which is presented in Section~\ref{sec:1-arm}.
The proofs of Theorem~\ref{thm:2pointUB}, Corollary~\ref{cor:2pointZ3},
and \eqref{eq:lowerboundtau} are given in Section~\ref{sec:2point}.

Throughout this article we always tacitly assume that
\eqref{eq:restr-params} holds. All constants belong to $(0,\infty )$ and
may implicitly depend on $\nu $ and $\alpha $ in the sequel (as well as
on the constants from \eqref{eq:intro_sizeball},
\eqref{eq:intro_Green}, and the ellipticity condition on controlled weights)
but not on the choice of base point $0$ in \eqref{eq:0} below, which is
arbitrary. Any further dependence on parameters will be stated explicitly.
Numbered constants $c_{1},C_{1},\dots $ are fixed once they first appear,
whereas the constants $c$ and $C$ may change from place to place. It will
be convenient to abbreviate
%
\begin{equation}
\label{eq:bone} b_{1}= %
\begin{cases} 1 &
\text{if }\nu =1,
\\
0 & \text{else} \end{cases} %
\quad \text{and} \quad
b_{2}= %
\begin{cases} 1 & \text{if }
\alpha =2\nu ,
\\
0 & \text{else.} \end{cases} %
\end{equation}

\textit{Note added in proof}. During the review process of this article,
the following improvements have been obtained for the critical one-arm
probabilities as well as the volume:
\begin{itemize}
\item[{(a)}] The techniques developed in this article are crucial to our follow-up
work \cite{DrePreRod10}, Theorem~1.1, where it is shown that, if
$0< \nu < \frac{\alpha}{2}$, one has
%
\begin{equation}
\label{eq:upToConst} cR^{-\frac{\nu}{2}} \leq \psi (R) \leq CR^{-\frac{\nu}{2}}\quad
\text{for all $R \geq 1$.} 
\end{equation}
In particular, this covers the physical benchmark case of
$\mathbb{Z}^{3}$.
As a consequence, one can improve Corollary~\ref{cor:2pointZ3} by removing
the $\log \log \xi $ term in \eqref{eq:2pointZ3}.
\item[{(b)}] Cai and Ding
\cite{cai2024onearmprobabilitiesmetricgraph}, Theorem~1.1, extended
\eqref{eq:upToConst} to $\mathbb{Z}^{4}$ and $\mathbb{Z}^{5}$ by use of
different methods; furthermore, they showed that in $\mathbb{Z}^{6}$ one
has $\psi (R) = R^{-2+o(1)}$ as $R \to \infty $.
\item[{(c)}] Most recently, in \cite{DrePreRod11}, Theorem~1.1, display
\eqref{eq:criticalvolume} has been improved to
\begin{equation*}
cn^{-\frac{\nu}{2\alpha - \nu}} \leq \mathbb{P} \bigl( \bigl\llvert \mathcal{K}^{0}
\bigr\rrvert \geq n \bigr) \leq Cn^{-\frac{\nu}{2\alpha - \nu}},
\end{equation*}
for all $0 < \nu < \frac{\alpha}{2}$ and on $\mathbb{Z}^{4}$ and
$\mathbb{Z}^{5}$; see also \cite{cai2024quasimulti}, Theorem~1.11, in the
lattice case. One can also set $q(\cdot )\equiv 1$ in the lower bound in
\eqref{eq:nearcriticalvolume}, when $0<\nu <\alpha /2$, and extend this
to $\mathbb{Z}^{4}$ and $\mathbb{Z}^{5}$ (with $\alpha =d=\nu +2$); see
\cite{DrePreRod11}, Corollaries 1.4 and 2.3. Note however that the lower
bound \eqref{eq:nearcriticalvolume} is obtained in \cite{DrePreRod11} by
integrating a suitable lower bound on the tail of the near-critical volume,
obtained therein, rather than on the two-point function, as in the present
article.
\end{itemize}

\section{Preliminaries}
\label{sec:pre}

In this section we collect a small amount of notation and gather a useful
preliminary result on the connectivity function for the free field in the
presence of a certain obstacle (set) $\mathcal{O} \subset G$ on which the
random walk is killed; see Corollary~\ref{cor:2pointkilled}. The obstacle
set has certain good properties, which make it hard for the random walk
to avoid; see Lemma~\ref{lem:hittingobstacle}. The related results will
play an important role for the proofs of both Theorems~\ref{T:1arm-crit}
and~\ref{thm:2pointUB}.

We start by discussing a few consequences of our setup from Section~\ref{sec:intro};
see above \eqref{eq:intro_sizeball}. In the sequel, in order to simplify
notations, we typically consider
%
\begin{equation}
\label{eq:0} 0, \quad \text{an arbitrary point in $G$} 
\end{equation}
and abbreviate $B_{r}=B(0,r)$. All results hold uniformly in the choice
of $0$. Similarly as in \cite{DrePreRod2}, Lemma~6.1, we introduce under
our assumptions on $\mathcal{G}$ (note, in particular, that controlled
weights (see above \eqref{eq:intro_sizeball}) correspond to the condition
denoted by $(p_{0})$ in \cite{DrePreRod2}, Section~2) the approximate renormalized
lattice $\Lambda (L)$ with the properties that there is a constant
$\Cl{CLambda}<\infty $ such that, for all $x\in{G}$, $L\geq 1$ and
$N\geq 1$,
%
\begin{equation}
\label{eq:defLambda} %
\begin{aligned} &0 \in \Lambda (L),\quad \bigcup
_{y\in{\Lambda (L)}}B(y,L)=G, \qquad \text{the balls }B \biggl(y,
\frac {L}{2} \biggr), \quad y\in{\Lambda (L)}, \quad\! \text{are disjoint,}\hspace*{-7pt}
\\
&\text{and } \bigl\llvert \Lambda (L)\cap B(x,LN) \bigr\rrvert \leq
\Cr{CLambda}N^{\alpha}. \end{aligned} %
\end{equation}
The asserted disjointness follow from an inspection of the proof of
\cite{DrePreRod2}, Lemma~6.1. We call $\pi =(x_{i})_{1\leq i\leq M}$ a
path in $\Lambda (L)$ from $0$ to $A\subset G$ if $x_{1}=0$,
$x_{M}\in{A}$, $x_{i}\in{\Lambda (L)}$ for all $1\leq i\leq M$, and for
each $1\leq i\leq M-1$, there exist $x\in{B(x_{i},L)}$ and
$y\in{B(x_{i+1},L)}$ such that $x$ and $y$ are neighbors in
$\mathcal{G}$.

We write $X=(X_{t})_{t \geq 0}$ for the diffusion on the cable system (or
metric graph) $\widetilde{\mathcal{G}}$ associated to $\mathcal{G}$ and
$Z=(Z_{n})_{n \geq 0}$ for the discrete skeleton of the trace of $X$ on
$G$; see \cite{DrePreRod3}, Section~2.1, for details. Furthermore, we denote
by $P_{x}$, $x\in \widetilde{\mathcal{G}}$, the canonical law of $X$ with
$X_{0}=x$. If $x\in G$, the law of $Z$ under $P_{x}$ is that of the discrete
time simple random walk on the weighted graph
$\mathcal{G}=(G,\lambda )$. For all
$U\subset \widetilde{\mathcal{G}}$ open and
$x,y\in{\widetilde{\mathcal{G}}}$, we denote by $g_{U}(x,y)$, the Green's
function of $X$ killed outside $U$ between $x$ and $y$, and abbreviate
$g(x,y)=g_{\widetilde{\mathcal{G}}}(x,y)$. When $x,y\in{G}$, then
$g(x,y)$ simply corresponds to the Green's function associated to the walk
$Z$. For $U \subset G$, we write
$H_{U}=H_{U}(Z)=\inf \{ n \geq 0 : Z_{n} \in U\}$ for the entrance time
in $U$ and $T_{U}= H_{G \setminus U}$ for the exit time of $Z$ from
$U$. We also define the equilibrium measure $e_{U}$ of a finite set
$U\subset G$ and its capacity $\operatorname{cap}(U)$ via
%
\begin{equation}
\label{eq:defcap} %
\begin{aligned} \operatorname{cap}(U)=
\operatorname{cap}_{\widetilde{\mathcal{G}}}(U) \stackrel{\text{def.}} {=}\sum
_{x\in{U}}e_{U}(x)\quad \text{where }e_{U}(x)
\stackrel{\text{def.}} {=}\lambda _{x}P_{x}(
\widetilde{H}_{U}=\infty ), \end{aligned} %
\end{equation}
with $\widetilde{H}_{U}$ denoting the first return time to $U$. One then
has
%
\begin{equation}
\label{eq:lastexit} P_{x}(H_{U}<\infty )=\sum
_{y\in{G}}g(x,y)e_{U}(y)\quad \text{for all }x\in{G},
\end{equation}
which follows immediately from a last exit decomposition for the walk;
compare \cite{MR2932978}, (1.57), for the finite graph setting. One further
knows (see, e.g.,~\cite{DrePreRod5}, (5.7)) that, uniformly in
$x \in G$ and $R \geq 1$,
%
\begin{equation}
\label{eq:cap-box} c R^{\nu }\le \operatorname{cap} \bigl(B(x,R) \bigr)
\le CR^{\nu}. 
\end{equation}

Under the standing assumptions on $\mathcal{G}$ (see Section~\ref{sec:intro}),
one also has that the following elliptic Harnack inequality holds. On account
of \cite{DrePreRod2}, (3.3) (cf.~also references therein) such that for
all $\zeta \geq \Cl{zeta0}$, $x \in G$, $R \geq 1$, and
$h: G \to [0,\infty )$, which are harmonic in $B(x,\zeta R)$,
%
\begin{equation}
\label{eq:Harnack} \sup_{y \in B(x, R)} h(y) \leq \Cl[c]{c:Harnack}
\inf_{y \in B(x, R)} h(y). 
\end{equation}

We now introduce the obstacles that will play a role in the sequel. We
call a set $\mathcal{O}\subset G$ a $(L,R,n,\kappa )$-good (or simply good
when the parameters are clear from the context) obstacle if for each path
$\pi $ in $\Lambda (L)$ from $0$ to $B_{R}^{\mathsf{c}}$, there exists
a set $A\subset \operatorname{range}(\pi \cap B_{R})$ with $|A|\geq n$ such that
$\operatorname{cap}(\mathcal{O}\cap B(y,L))\geq \kappa $ for each
$y\in{A}$. Recall our convention regarding constants $c$, $C$ from the end
of the previous section.
%
\begin{lem}
\label{lem:hittingobstacle}
There exists a constant $\Cl{C:dist0} \in (0,\infty )$ such that, for all
$\kappa ,L\geq 1$, $R\geq L$, integer $n\geq 1$,
$x\in{B_{R+\Cr{C:dist0}L}^{\mathsf{c}}}$, and for any
$(L,R,n,\kappa )$-good obstacle set $\mathcal{O}$, one has
\begin{equation*}
P_{0} (H_{x}<H_{\mathcal{O}} )\leq
Cd(x,B_{R+\Cr{C:dist0}L})^{-
\nu}\exp \biggl\{-\frac{c\kappa n}{L^{\nu}}
\biggr\}.
\end{equation*}
\end{lem}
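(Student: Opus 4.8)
The plan is to estimate $P_0(H_x < H_{\mathcal{O}})$ by decomposing the walk's trajectory from $0$ according to the sequence of renormalization boxes $B(y,L)$, $y \in \Lambda(L)$, that it visits before reaching $x$. The key point is that, on the event $\{H_x < H_{\mathcal{O}}\}$, the walk must travel from $0$ to $x$ (which lies outside $B_{R+\Cr{C:dist0}L}$) while avoiding $\mathcal{O}$ entirely; in particular it avoids $\mathcal{O}$ inside $B_R$. The trace of such a trajectory on the macroscopic lattice $\Lambda(L)$ contains a path $\pi$ in $\Lambda(L)$ from $0$ to $B_R^{\mathsf c}$ (this is where the definition of a path in $\Lambda(L)$, via neighboring vertices in the $L$-balls, is used, together with the covering property $\bigcup_{y} B(y,L) = G$ from \eqref{eq:defLambda}). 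By the good-obstacle property, for any such $\pi$ there is a set $A \subset \mathrm{range}(\pi \cap B_R)$ with $|A| \geq n$ on which $\mathrm{cap}(\mathcal{O} \cap B(y,L)) \geq \kappa$; in particular the walk visits each ball $B(y,L)$, $y \in A$, and in each such ball it must avoid a set of capacity at least $\kappa$.

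\textbf{Key steps.} First I would set up the macroscopic-path structure: on $\{H_x < H_\mathcal{O}\}$, let $\pi$ be the (ordered) sequence of $\Lambda(L)$-boxes visited by $Z$ up to time $H_x$, truncated upon exiting $B_R$; this is a path in $\Lambda(L)$ from $0$ to $B_R^{\mathsf c}$, so the good-obstacle property produces the set $A = A(\pi)$ with $|A| \geq n$. Second, I would iterate a one-box escape estimate: each time the walk is in $B(y,L)$ for $y \in A$, it must exit $B(y, \text{const} \cdot L)$ without hitting $\mathcal{O} \cap B(y,L)$, and the probability of this — starting from anywhere in $B(y,L)$ — is bounded by $1 - c\,\mathrm{cap}(\mathcal{O}\cap B(y,L))/\mathrm{cap}(B(y,CL)) \leq 1 - c\kappa L^{-\nu}$, using \eqref{eq:lastexit}, \eqref{eq:cap-box}, the Green's function bounds \eqref{eq:intro_Green}, and the Harnack inequality \eqref{eq:Harnack} to make this uniform over starting points. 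Multiplying over the $\geq n$ disjoint (hence independent, via the strong Markov property at successive entrance times) passages in distinct boxes of $A$ gives a factor $(1-c\kappa L^{-\nu})^n \leq \exp\{-c\kappa n/L^\nu\}$. Third, I would insert the polynomial prefactor: since $x \notin B_{R+\Cr{C:dist0}L}$ and the path exits $B_R$ "before" reaching $x$, I would use a last-exit / Green's function bound $P_0(H_x < H_\mathcal{O}) \leq P_{\cdot}(H_x < \infty)$ run from a point on the boundary of $B_{R + \Cr{C:dist0}L}$, giving $g(\cdot, x) \asymp d(x, B_{R+\Cr{C:dist0}L})^{-\nu}$ via \eqref{eq:intro_Green} and \eqref{eq:cap-box} applied to the point $x$. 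Combining the two factors yields the claimed bound; $\Cr{C:dist0}$ is chosen large enough (depending on the constant in the one-box estimate and on the geometry in \eqref{eq:defLambda}) that this decomposition is legitimate.

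\textbf{Main obstacle.} I expect the delicate point to be making the one-box escape estimate genuinely uniform and correctly "disjointifying" the $\geq n$ passages. Concretely: the walk may enter and leave a given box $B(y,L)$ many times, and one must be careful to extract $n$ distinct boxes from $A$ in which independent escape attempts take place, i.e.\ to set up the right sequence of stopping times (successive entrances into not-yet-visited boxes of $A$) and apply the strong Markov property so that the product over $n$ factors is valid; here the disjointness of the balls $B(y,L/2)$ from \eqref{eq:defLambda} and a bounded-overlap argument for the dilated balls $B(y, CL)$ are what keep the geometry under control. The other technical nuisance is the uniform lower bound $\mathrm{cap}(\mathcal{O} \cap B(y,L)) / \sup_{z} P_z(H_{\mathcal{O} \cap B(y,L)} < T_{B(y,CL)}) \gtrsim \kappa L^{-\nu}$ uniformly over the starting point $z \in B(y,L)$, which requires \eqref{eq:Harnack} (hitting probability of a fixed set seen from inside a ball is comparable up to constants across the ball) combined with the standard capacity estimate $P_z(H_K < \infty) \asymp \mathrm{cap}(K) \cdot g(z, K)$ from \eqref{eq:lastexit}. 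Once these are in place the rest is bookkeeping.
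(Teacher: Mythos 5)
Your proposal follows the same overall structure as the paper's proof: (i) a one-box escape estimate $P_z(H_{\mathcal{O}}<T_{B(y,CL)})\geq c\kappa L^{-\nu}$ uniformly over $z\in B(y,L)$, obtained from a last-exit decomposition and the Green's function bounds (the paper cites \cite[(2.17)]{prevost2023passage}; your extra appeal to Harnack is not needed but also not wrong); (ii) iteration of this estimate over $\gtrsim n$ disjoint passages via the strong Markov property; and (iii) a Green's function factor $d(x,B_{R+\Cr{C:dist0}L})^{-\nu}$ for the final leg from $\partial B_{R+\Cr{C:dist0}L}$ to $x$. This is the paper's argument.

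One point you flag as "delicate" does require a genuine fix, and it is worth being explicit about why. Your proposed stopping times are "successive entrances into not-yet-visited boxes of $A$," but $A=A(\pi)$ is determined only by the entire macroscopic path traced up to $T_{B_R}$, so it is not available to the filtration at the time of those entrances; the strong Markov iteration cannot be run against a terminal, path-dependent target set. The paper sidesteps this by defining the boxes $y_1,\dots,y_M$ \emph{greedily and adaptedly}: $y_i$ is simply the first new $\Lambda(L)$-vertex whose $L$-ball the walk enters, subject to $\mathrm{cap}(\mathcal{O}\cap B(y_i,L))\geq\kappa$ and to disjointness of $B(y_i,L)$ from the previously used dilated balls $B(y_j,\Cr{Chittingbox}L)$. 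These are honest stopping-time constructions, so the multiplicative bound $(1-c\kappa L^{-\nu})^{cn}$ goes through; the fact that one collects $M\geq cn$ of them almost surely is then deduced \emph{a posteriori} from the good-obstacle property applied to the path the walk actually traces, together with the bounded-overlap bound $|\Lambda(L)\cap B(x,LN)|\leq \Cr{CLambda}N^{\alpha}$ from \eqref{eq:defLambda} (each dilated ball $B(y_j,\Cr{Chittingbox}L)$ can absorb only $O(1)$ candidate centers). So the order of logic is reversed relative to your description: boxes are selected without reference to $A$, and $A$ only enters to certify that enough boxes were selected. With that adjustment, your proof is the paper's.
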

\begin{proof}
By a slight generalization of \eqref{eq:lastexit} (see, for instance,
\cite{prevost2023passage}, (2.17)), there exist constants
$\Cl{Chittingbox}<\infty $ and $\Cl[c]{chittingbox}>0$ such that, for any
$y\in{G}$ with $\operatorname{cap}(\mathcal{O}\cap B(y,L))\geq \kappa $, we have
%
\begin{equation}
\label{eq:hittingproba} P_{z} (H_{\mathcal{O}}<T_{B(y,\Cr{Chittingbox}L)} )
\geq \frac{\Cr{chittingbox}\kappa}{L^{\nu}}\quad \text{for all }z\in{B(y,L)}. 
\end{equation}
Let us introduce recursively a random sequence of vertices
$y_{1},\dots ,y_{M}$ depending on $Z$ under $P_{0}$ as follows:
$y_{i}$ is the first vertex $y\in{\Lambda (L)\cap B_{R}}$ visited by
$B(Z_{k},L)$, $k\in{\mathbb{N}}$ (with an arbitrary rule for splitting
ties) and such that:
\begin{itemize}
\item $\operatorname{cap}(B(y,L)\cap \mathcal{O})\geq \kappa $, and
\item $B(y,L)\cap B(y_{j},\Cr{Chittingbox}L)=\emptyset $ for all
$j\leq i-1$.
\end{itemize}
We stop the recursion the first time $M=M(L,R,\mathcal{O},\kappa )$ after
which such a vertex $y$, as described before, does not exist. Since
$\mathcal{O}$ is a $(L,R,n,\kappa )$-good obstacle, there are at least
$n-1$ different vertices $y\in{\Lambda (L)\cap B_{R}}$ such that $Z$ hits
$B(y,L)$ and $\operatorname{cap}(\mathcal{O}\cap B(y,L)) \ge \kappa $. One can
easily deduce from \eqref{eq:defLambda} that there exists $c>0$ such that
$M\geq cn$ almost surely. Let us denote by $H_{i}$ the first time
$Z$ hits $B(y_{i},L)$ and by $T_{i}$ the first time $Z$ exits
$B(y_{i},\Cr{Chittingbox}L)$ after $H_{i}$. Choosing $\Cr{C:dist0}$ large
enough, we then have
%
\begin{equation}
\label{eq:boundhittingIu} %
\begin{aligned} P_{0}(H_{x}<H_{\mathcal{O}})&
\leq P_{0} \bigl(\mathcal{O} \cap \{Z_{k},H_{i}
\leq k< T_{i}\}=\emptyset \forall 1 \leq i\leq \lceil cn\rceil
,T_{\lceil cn\rceil }\leq H_{x}<\infty \bigr)
\\
&\leq \biggl(1-\frac{\Cr{chittingbox}\kappa}{L^{\nu}} \biggr)^{cn}\cdot
Cd(x,B_{R+
\Cr{C:dist0}L})^{-\nu}; \end{aligned} %
\end{equation}
here the last inequality is obtained as follows: we first apply the strong
Markov property of $Z$ at time $T_{\lceil cn\rceil}$ combined with
\eqref{eq:intro_Green}, and noting that we have
$d(x,Z_{T_{\lceil cn\rceil}})\geq d(x,B_{R+CL})$, in order to obtain the
second factor in the last line of the right-hand side of
\eqref{eq:boundhittingIu}. We then apply the strong Markov property recursively
at times $H_{\lceil cn \rceil -i}$,
$0\leq i\leq \lceil cn \rceil -1$, together with the bound
\eqref{eq:hittingproba}. Using the inequality $1-t\leq e^{-t}$ for all
$t\geq 0$, we obtain the first factor and can conclude.
\end{proof}

We now discuss a consequence of the above result for the Gaussian free
field, which is tailored to our later purposes. Extending the definition
from Section~\ref{sec:intro}, we write ${\varphi}$ from here on to denote
the Gaussian free field on the metric graph
$\widetilde{\mathcal{G}}$ and continue to write $\mathbb{P}$ for its canonical
law. More generally, for $U\subset \widetilde{\mathcal{G}}$ open, we denote
by $\mathbb{P}_{U}$ the law under which $(\varphi _{x})_{x\in{U}}$ is a
Gaussian free field on $U$, that is, a centered Gaussian field with covariance
$g_{U}(x,y)$, $x,y\in{\widetilde{\mathcal{G}}}$; we refer to
\cite{MR3502602} and \cite{DrePreRod3} for further details regarding the
cable system Gaussian free field. We further write ``$x
\leftrightarrow y$ in $A$'' if there is a continuous path in
$A\subset \widetilde{\mathcal{G}}$ from $x$ to $y$, which is consistent
with the notation introduced below \eqref{eq:2-point} when identifying
edges with their respective cables in $\widetilde{\mathcal{G}}$.

We now present a consequence of Lemma~\ref{lem:hittingobstacle} and
\cite{MR3502602}, Proposition~5.2, which is one of the key observations
for the proofs of Theorems~\ref{T:1arm-crit} and \ref{thm:2pointUB}.
%
\begin{Cor}
\label{cor:2pointkilled}
For all $\kappa ,L\geq 1$, $R\geq L$, $n\in{\mathbb{N}}$,
$x\in{G\setminus B_{R+\Cr{C:dist0}L}}$,
$U\subset \widetilde{\mathcal{G}}$ open such that
$\mathcal{O}\stackrel{\mathrm{def.}}{=}G\cap U^{\mathsf{c}}$ is a
$(L,R,n,\kappa )$-good obstacle, we have
\begin{equation*}
\mathbb{P}_{U} \bigl(0\leftrightarrow x\text{ in }\{y\in{U}:\varphi _{y}
\geq 0\} \bigr)\leq Cd(x,B_{R+\Cr{C:dist0}L})^{-\nu}\exp  \biggl\{-
\frac{c\kappa n}{L^{\nu}} \biggr\}.
\end{equation*}
\end{Cor}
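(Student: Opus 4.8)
The plan is to deduce Corollary~\ref{cor:2pointkilled} from Lemma~\ref{lem:hittingobstacle} by means of the metric-graph isomorphism/connectivity identity of Lupu, specifically \cite[Proposition~5.2]{MR3502602}, which relates the probability that two points are connected in the positive excursion set of the (killed) cable-system free field to a hitting probability for the associated diffusion. Concretely, for $U\subset\tilde\G$ open and $x,y\in U$, that identity reads (in the form we need)
\[
\P_U\big(x\leftrightarrow y \text{ in }\{z\in U:\phi_z\geq 0\}\big)=\frac{g_U(x,y)}{\sqrt{g_U(x,x)g_U(y,y)}}\,,
\]
or, after the usual manipulation, is bounded above by something proportional to $P^U_x(H_y<\infty)$, the probability that the diffusion $X$ started at $x$ and killed on exiting $U$ hits $y$. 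The first step is therefore to invoke \cite[Proposition~5.2]{MR3502602} to pass from the left-hand side of the claimed inequality to a quantity of the form $C\,g_U(x,y)$ (using $g_U(z,z)\geq c$, which follows from \eqref{eq:intro_Green} and domain monotonicity, or is built into the cited statement), and then to note $g_U(x,y)\leq g(x,y)\,\cdot\,$(hitting-type factor); more precisely one wants $g_U(x,y)\leq C\,P_x(H_x<H_{\mathcal O})\cdot(\text{something})$ — I would rather directly bound $\P_U(x\leftrightarrow y\text{ in the excursion set})\le C\,P_x^U(H_y<\infty)$, since any continuous positive-sign path from $x$ to $y$ inside $U$ forces the diffusion (run until it exits $U$) to be able to travel from $x$ to $y$ without touching $\partial U$, i.e.\ without hitting $\mathcal O=G\cap U^{\mathsf c}$.

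Having reduced to bounding $P_x^U(H_y<\infty)$, or equivalently $P_x(H_y<H_{\mathcal O})$ for the walk $Z$ on $G$ (after reducing the diffusion to its discrete skeleton, as is standard and as set up in Section~\ref{sec:pre}), the second step is simply to apply Lemma~\ref{lem:hittingobstacle} with the roles of the base point and target point played by $x$ and $0$: note that $\mathcal O$ is assumed to be a $(L,R,n,\kappa)$-good obstacle, that $x\in G\setminus B_{R+\Cr{C:dist0}L}$, and that the event $\{0\leftrightarrow x$ in the positive excursion set of $U\}$ is contained, via the isomorphism step, in $\{H_x<H_{\mathcal O}\}$ for the walk started at $0$ (or in $\{H_0<H_{\mathcal O}\}$ started at $x$, whichever orientation matches the hypotheses of Lemma~\ref{lem:hittingobstacle}; by symmetry of the Green's function these are comparable). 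Lemma~\ref{lem:hittingobstacle} then gives exactly
\[
P_0(H_x<H_{\mathcal O})\leq C\,d(x,B_{R+\Cr{C:dist0}L})^{-\nu}\exp\Big\{-\frac{c\kappa n}{L^{\nu}}\Big\},
\]
which is the desired bound.

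The main obstacle — really the only non-bookkeeping point — is getting the reduction from the free-field connection event to the walk hitting event exactly right, i.e.\ correctly quoting \cite[Proposition~5.2]{MR3502602} and checking that the prefactors $g_U(x,x)$, $g_U(y,y)$ are bounded below uniformly (so that they can be absorbed into $C$); this uses $g_U\leq g$ and the lower bound $g(z,z)\geq c$ from \eqref{eq:intro_Green}, together with the fact that $x\notin\mathcal O$ so $g_U(x,x)$ is comparable to $g(x,x)$ away from the killing set, or more robustly one keeps only the Green-function identity and bounds $g_U(x,y)\le g(x,y)$ and then uses \eqref{eq:lastexit}-type reasoning to extract the hitting probability. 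A secondary technical point is passing between the cable-system diffusion $X$ killed outside $U$ and the discrete walk $Z$ killed on $\mathcal O$: a continuous path in $U$ from $x$ to $y$ staying in $\{\phi\ge 0\}$ in particular stays in $U$, so the trace of $X$ on $G$, i.e.\ $Z$, does not hit $\mathcal O=G\cap U^{\mathsf c}$ before reaching the $G$-vertex nearest $y$; this is routine given the setup in Section~\ref{sec:pre} and \cite[Section~2.1]{DrePreRod3}. Modulo these identifications the corollary is an immediate combination of \cite[Proposition~5.2]{MR3502602} and Lemma~\ref{lem:hittingobstacle}.
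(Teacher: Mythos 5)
Your overall plan is the right one and matches the paper's: invoke Lupu's metric-graph connectivity identity \cite[Proposition~5.2]{MR3502602} to convert the excursion-set connection probability into a Green's-function ratio, reduce to the hitting probability $P_0(H_x<H_{\mathcal O})$ for the walk killed on $\mathcal O$, and then apply Lemma~\ref{lem:hittingobstacle}. But the reduction step as you describe it has real gaps.

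The central problem is your proposed justification for removing the denominator $\sqrt{g_U(0,0)\,g_U(x,x)}$. You claim $g_U(z,z)\geq c$ ``from \eqref{eq:intro_Green} and domain monotonicity,'' but domain monotonicity gives $g_U\le g$, which is the \emph{wrong} direction, and in fact no such uniform lower bound holds: if $U^{\mathsf c}$ cuts into the cables adjacent to $0$ arbitrarily close to $0$, the diffusion leaves $U$ almost immediately and $g_U(0,0)$ is arbitrarily small. The hedge ``or bound $g_U(x,y)\leq g(x,y)$ and then use \eqref{eq:lastexit}-type reasoning'' does not work either, since passing to the unkilled Green's function discards precisely the killing on $\mathcal O$ that produces the exponential factor. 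The paper sidesteps the issue entirely with the strong-Markov identities $g_U(0,x)=g_U(x,x)P_0(H_x<H_{U^{\mathsf c}})=g_U(0,0)P_x(H_0<H_{U^{\mathsf c}})$, which yield the exact cancellation
\begin{equation*}
\frac{g_U(0,x)}{\sqrt{g_U(0,0)\,g_U(x,x)}}=\sqrt{P_0(H_x<H_{U^{\mathsf c}})\,P_x(H_0<H_{U^{\mathsf c}})}\,,
\end{equation*}
with no lower bound on diagonal Green's functions needed; one then bounds each factor by the corresponding $P_\cdot(H_\cdot<H_{\mathcal O})$ and uses reversibility and controlled weights (not mere ``symmetry of the Green's function'') to identify the two. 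Separately, the ``direct'' alternative you propose — that a positive-sign path in $\{\phi\ge 0\}$ ``forces'' the killed diffusion to travel from $x$ to $y$ — is not a valid probabilistic step: the field $\phi$ and the diffusion $X$ are distinct objects, and the only bridge between the connection event and the hitting probability is Lupu's $\arcsin$ identity (which you also quote imprecisely, omitting the $\frac1\pi\arcsin(\cdot)$ and the subsequent use of $\arcsin t\le \pi t/2$). Finally, you should handle the degenerate case $0\in\mathcal O$ or $x\in\mathcal O$ explicitly, as the paper does, rather than implicitly via a false uniform bound on $g_U$.
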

\begin{proof}
If $0\in{\mathcal{O}}$ or $x\in{\mathcal{O}}$, the statement is trivial.
Otherwise, by the symmetry of the Gaussian free field and
\cite{MR3502602}, Proposition~5.2, for the graph $\mathcal{G}$ with infinite
killing on $\mathcal{O}$, we have
%
\begin{equation}
\label{eq:PUarcsin}
\mathbb{P}_{U} \bigl(0\leftrightarrow x\text{ in }\{y\in{U}: \varphi _{y}
\geq 0\} \bigr)=\frac{1}{\pi}\arcsin  \biggl(
\frac{g_{U}(0,x)}{\sqrt{g_{U}(0,0)g_{U}(x,x)}} \biggr).
\end{equation}
Using the inequality $\arcsin (t)\leq \pi t/2$ for all $t\in{[0,1]}$, the
right-hand side is upper bounded by
\begin{equation*}
\frac{g_{U}(0,x)}{2 \sqrt{g_{U}(0,0)g_{U}(x,x)}}.
\end{equation*}
Now, note that
$g_{U}(0,x)=g_{U}(x,x)P_{0}(H_{x}<H_{U^{\mathsf{c}}})$ and
$g_{U}(0,x)=g_{U}(0,0)P_{x}(H_{0}<H_{U^{\mathsf{c}}})$. Combining this
with the fact that
$P_{0}(H_{x}<H_{U^{\mathsf{c}}})\leq P_{0}(H_{x}<H_{\mathcal{O}})$ and
that
\begin{equation*}
P_{x}(H_{0}<H_{U^{\mathsf{c}}})\leq
P_{x}(H_{0}<H_{\mathcal{O}})= \lambda
_{0}P_{0}(H_{x}<H_{\mathcal{O}})/
\lambda _{x}\leq CP_{0}(H_{x}<H_{
\mathcal{O}}),
\end{equation*}
which is due to \cite{MR2932978}, (1.23), and
\cite{DrePreRod2}, (2.10), one deduces that the right-hand side of
\eqref{eq:PUarcsin} is bounded by $CP_{0}(H_{x}<H_{\mathcal{O}})$. The
conclusion now follows using Lemma~\ref{lem:hittingobstacle}.
\end{proof}

\section{Markovian loops}
\label{sec:loops}

We now collect some useful properties concerning Markovian loop soups,
as introduced, for instance, in
\cite{MR2045953,zbMATH05120529,zbMATH05757657,MR3502602}. These properties
will then be used in the next section to prove Theorem~\ref{T:1arm-crit}.
Indeed, the upper bound \eqref{eq:1-arm-UB-crit} relies on a profound link
between the Gaussian free field and a Poisson cloud of Markovian loops,
which we now recall. The (rooted) loop soup ${\mathcal{L}}$ of parameter
$\frac{1}{2}$ is a Poisson point process of (bounded, continuous and rooted)
loops on $\widetilde{\mathcal{G}}$ under $\mathbb{Q}$ having intensity
measure $\alpha \mu $, with $\alpha =\frac{1}{2}$ and a measure
$\mu $, which we proceed to introduce. The measure $\mu $ acts on the space
of rooted loops, that is,~of continuous trajectories
$\gamma : [0,T] \to \widetilde{\mathcal{G}}$ satisfying
$\gamma (0)= \gamma (T)$, for some $T=T(\gamma ) \in (0,\infty )$ called
the duration of the loop. More precisely, we define
%
\begin{equation}
\label{eq:loops-mu} \mu ( \cdot ) \stackrel{\text{def.}} {=}
\int _{
\widetilde{\mathcal{G}}} {\mathrm{d}}m(x)
\int _{0}^{\infty} \frac{{\mathrm{d}}t}{t}
q_{t}(x,x) P_{x,x}^{t}( \cdot ) ,
\end{equation}
where $P_{x,x}^{t}( \cdot  )$ is the time $t$ bridge probability measure
for the diffusion $X$ on $\widetilde{\mathcal{G}}$ (introduced above
\eqref{eq:lastexit}), $m$ is the natural Lebesgue measure on
$\widetilde{\mathcal{G}}$, which assigns length
$1/(2\lambda _{x,y})$ to the cable joining $x$ and $y$, and $q_{t}$ is
the transition density of $X$ relative to $m$; see
\cite{MR3238780}, Section~2, and \cite{MR3502602}, Section~2, where it is
denoted by $\widetilde{\mathcal{L}}_{\nicefrac12}$, for details.

If $U \subset \widetilde{\mathcal{G}}$ is an open subset, the loop soup
$\mathcal{L}_{U}$ is obtained by retaining only the loops in
$\mathcal{L}$ whose range is contained in $U$. The restriction property
of $\mathcal{L}$ (see \cite{MR3238780}, Theorem~6.1, or also
\cite{MR2932978}, Proposition~3.6, in the discrete setting) asserts that
%
\begin{equation}
\label{eq:LS-rest} {\mathcal{L}}_{U} \text{ has law }
\mathbb{Q}_{U}, 
\end{equation}
where $\mathbb{Q}_{U}$ now denotes the law of the metric graph loop soup
with underlying graph $U$, that is, with infinite killing on
$U^{\mathsf{c}}$. The loop soup $\mathcal{L}$ induces clusters in
$\widetilde{\mathcal{G}}$ as follows. Two continuous loops belong to the
same cluster if there exists a finite sequence of loops starting and ending
with the two loops of interest, and such that the ranges of any two consecutive
loops in the sequence intersect. Clusters of loops are connected components
obtained in this way using loops in the support of $\mathcal{L}$. We will
take advantage of the following link relating the loop soup
$\mathcal{L}$ to the free field $\varphi $ under $\mathbb{P}$. Considering
%
\begin{equation}
\label{eq:C_s} \mathcal C= \mathcal{C}({\mathcal{L}}),\quad  \text{the trace on $\widetilde{\mathcal{G}}$ of the cluster of loops (in
$\mathcal{L}$) containing $0$},
\end{equation}
one knows (see, for instance, \cite{MR3502602}, Proposition~2.1, for a proof,
which relies on an isomorphism of Le Jan; see
\cite{zbMATH05757657}, Theorem~13) that
%
\begin{equation}
\label{eq:iso-LS} \text{the cluster of $0$ in
$\bigl\{x\in{\widetilde{\mathcal{G}}}:  \llvert \varphi _{x} \rrvert >0\bigr\}$ has the same law
under $\mathbb{P}$ as $\mathcal C$ under $\mathbb{Q}$.} 
\end{equation}
Further, notice that by \cite{DrePreRod3}, Lemma~4.1, if
$\varphi _{0}>0$. then the closure of the cluster of $0$ in
$\{x\in{\widetilde{\mathcal{G}}}: |\varphi _{x}|>0\}$ is almost surely
equal to the cluster of $0$ in
$\{x\in{\widetilde{\mathcal{G}}}: \varphi _{x}\geq 0\}$, whose intersection
with $G$ has the same law as the cluster of $0$ in $E^{\geq 0}$; see below
\eqref{eq:bond-proba}, as explained around \cite{DrePreRod5}, (1.6). Combined
with \eqref{eq:iso-LS}, this explains the relevance of $\mathcal{L}$ in
our context.

We proceed to gather specific features of the latter that will be beneficial
for us. To this effect, it is convenient to consider the discrete time
loop soup $\widehat{\mathcal{L}}$ on $G$ induced by $\mathcal{L}$, which
is obtained by considering the trace on $G$ of all continuous loops in
the support of $\mathcal{L}$ intersecting $G$, and only retaining nontrivial
loops, that is,~any loop visiting more than one vertex of
$\mathcal{G}$. As explained in \cite{MR3502602}, Section~2, combined with
the same proof as in the finite setting of \cite{MR2932978}, (3.17), the
intensity measure of $\widehat{\mathcal{L}}$ can be described as follows.
To any rooted loop $\gamma $ on $\widetilde{\mathcal{G}}$ visiting at least
two vertices of $G$, one associates $N=N(\gamma ) $ the number of times
($\geq 2$) $\gamma $ jumps to another vertex in $G$ and the corresponding
discrete skeleton $Z_{0}=Z_{0}(\gamma ),\ldots,Z_{n}=Z_{n}(\gamma )= Z_{0}$
when $N(\gamma )=n$. Then for all $n \geq 2$ and
$x_{0},\dots , x_{n-1} \in {G}$,
%
\begin{equation}
\label{eq:loops-mu-disc} \mu (N=n , Z_{0} = x_{0},\dots
Z_{n-1}= x_{n-1}) = \frac{1}{n} \prod
_{0
\leq i < n} \frac{\lambda _{x_{i},x_{i+1}}}{\lambda _{x_{i}}}  \quad \text{(with $x_{n}=x_{0}$).}
\end{equation}

We now collect several useful properties of the measure $\mu $ in
\eqref{eq:loops-mu}. In the special case of $\mathbb{Z}^{\alpha}$,
$\alpha \geq 3$ integer, results of this kind have
appeared in \cite{zbMATH06566372}, Section~2. These relied in the case
$\alpha =4$ on intersection results of \cite{zbMATH03791381} for two random
walks. With hopefully obvious notation, we write
$ K \stackrel{\gamma}{\longleftrightarrow} U $ to denote the property that
the loop $\gamma $ intersects both $K$ and $U$, for
$K,U \subset {G}$. Furthermore, recall that $b_{2}$ has been defined in
\eqref{eq:bone}.

\begin{lem}[$0< \nu \leq \frac{\alpha}{2} $]
\label{L:loop-intensities}
For all $\zeta \geq C$, $R \geq 1$, $x \in {G}$, and
$K \subset B(x,R)$,
%
\begin{align}
 \label{eq:mu-UB} &\mu \bigl(K \stackrel{\gamma} {\longleftrightarrow} B(x,
\zeta R)^{
\mathsf{c}} \bigr) \leq C \cdot \operatorname{cap}(K) (\zeta
R)^{-\nu}, 
\\
 \label{eq:mu-LB} &\mu \bigl( K \stackrel{\gamma} {\longleftrightarrow} B(x,
\zeta R)^{
\mathsf{c}}, \operatorname{cap}(\gamma ) \geq c \cdot
{R^{\nu}} { ( \log R)^{-b_{2}}} \bigr) \geq c \cdot
\operatorname{cap}(K) (\zeta R)^{-
\nu}, 
\end{align}
where with a slight abuse of notation
$\operatorname{cap}(\gamma ) = \operatorname{cap}(\operatorname{range}(
\gamma )\cap G)$.
\end{lem}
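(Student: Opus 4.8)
\textbf{Proof proposal for Lemma~\ref{L:loop-intensities}.}

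The plan is to exploit the explicit form of $\mu$ in \eqref{eq:loops-mu-disc} (or equivalently a last-exit/first-hit decomposition of the loop measure) to relate the event $\{K \stackrel{\gamma}{\longleftrightarrow} B(x,\zeta R)^{\mathsf{c}}\}$ to an excursion of the random walk. Concretely, a loop $\gamma$ intersecting both $K$ and $B(x,\zeta R)^{\mathsf{c}}$ contains a first hit of $K$; rooting the loop there and using the known formula $\mu(\gamma \text{ hits } K) $ decomposed through the equilibrium measure $e_K$, one gets that
\[
\mu\big(K \stackrel{\gamma}{\longleftrightarrow} B(x,\zeta R)^{\mathsf{c}}\big) = \sum_{y \in K} e_K(y)\, P_y\big(\text{the walk started at } y \text{ returns to } K \text{ after visiting } B(x,\zeta R)^{\mathsf{c}}\big) \cdot (\text{some bounded factor}),
\]
so that the left-hand side is comparable to $\sum_y e_K(y) P_y\big(H_{B(x,\zeta R)^{\mathsf{c}}} < \infty\big)$. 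By \eqref{eq:intro_Green} and the fact that $\mathrm{diam}(K) \le 2R$ while points of $B(x,\zeta R)^{\mathsf{c}}$ are at distance $\gtrsim \zeta R$ from $K$, \eqref{eq:lastexit} gives $P_y(H_{B(x,\zeta R)^{\mathsf{c}}}<\infty) \asymp \mathrm{cap}(B(x,\zeta R)) \cdot (\zeta R)^{-\nu} \asymp 1$ uniformly (using \eqref{eq:cap-box}); more carefully, $P_y(H_{B(x,\zeta R)^{\mathsf{c}}}<\infty) \le C (\zeta R)^{-\nu}\,\mathrm{cap}(B(x,\zeta R)^{\mathsf{c}} \cap \text{annulus})$ — rather, the clean statement is that this probability is of order $(\zeta R)^{-\nu}$ times a capacity term, and summing $e_K(y)$ over $y \in K$ gives $\mathrm{cap}(K)$. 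This yields the upper bound \eqref{eq:mu-UB}. For the lower bound of the same order one argues symmetrically: with probability bounded below the excursion from $K$ reaches $B(x,\zeta R)^{\mathsf{c}}$, the Green's function lower bound in \eqref{eq:intro_Green} being used here, together with the Harnack inequality \eqref{eq:Harnack} to make the hitting estimate uniform over the starting point and over $K$.

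The substantive part is \eqref{eq:mu-LB}: we must show that, on an event of probability of the correct order, the loop's trace has capacity $\gtrsim R^\nu (\log R)^{-\btwo}$. The natural route is to note that a loop contributing to \eqref{eq:mu-LB} must, in particular, cross the annulus $B(x,\zeta R) \setminus B(x, 2R)$, hence its trace contains a random-walk excursion from (near) $\partial B(x,2R)$ to $\partial B(x,\zeta R)$ and back. The key random-walk input is then a lower bound on the capacity of such an excursion: the range of a random walk run for a time comparable to the crossing of an annulus of radius $\asymp R$ has capacity $\gtrsim R^\nu$ when $\nu < \alpha/2$, and $\gtrsim R^\nu / \log R$ when $\nu = \alpha/2$ (the logarithmic loss at $\nu=\alpha/2$ being the familiar borderline phenomenon, present e.g.\ on $\Z^4$). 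This is precisely where the hypothesis $\nu \le \alpha/2$ enters, and it is exactly the type of estimate recorded in \cite[Lemmas~4.4 and~A.1]{DrePreRod2}; I would invoke those (or reprove the needed one-sided bound via a second-moment/Paley--Zygmund argument on $\sum_{z} e_{B(x,R)}(z)\mathbf 1\{z \in \mathrm{range}(\gamma)\}$, controlling the second moment through the Green's function bound $g(z,w) \le C d(z,w)^{-\nu}$ and the constraint $\nu \le \alpha/2$ that keeps $\sum_{z,w \in B(x,R)} e(z) e(w) g(z,w)$ of order $R^\nu$, up to the log at equality). Combining: decompose the loop at its first hit of $K$, follow it to $B(x,\zeta R)^{\mathsf c}$ picking up the factor $\asymp \mathrm{cap}(K)(\zeta R)^{-\nu}$ as above, and on this event further impose that the excursion through the annulus be ``typical'' in the sense that its range has the claimed capacity — the latter has conditional probability bounded below by the random-walk estimate — whence \eqref{eq:mu-LB}.

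The main obstacle I anticipate is making the capacity lower bound for the loop's trace both \emph{uniform} and \emph{quantitatively correct} at the boundary case $\nu = \alpha/2$: one has to be careful that conditioning the loop on connecting $K$ to $B(x,\zeta R)^{\mathsf c}$ does not distort the law of the annulus-crossing excursion in a way that destroys the second-moment bound, and that the $(\log R)^{-1}$ factor — not a worse power of $\log$ — is genuinely achievable. Cleanly, I would isolate a single mesoscopic annulus, say between radii $R$ and $2R$ around a well-chosen center on $K$, run the excursion estimate there using only the strong Markov property and the Green's-function two-sided bounds, and absorb all conditioning into events of bounded Radon--Nikodym derivative via \eqref{eq:Harnack}. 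Everything else (the last-exit decomposition of $\mu$, the reduction to hitting probabilities, the matching of upper and lower bounds in \eqref{eq:mu-UB}) is routine given \eqref{eq:intro_Green}, \eqref{eq:cap-box} and \eqref{eq:Harnack}.
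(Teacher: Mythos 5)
Your overall strategy --- rooting the loop at its first visit to $K$, decomposing the intensity through the equilibrium measure $e_K$, extracting the $(\zeta R)^{-\nu}$ rate from the Green's function, and for \eqref{eq:mu-LB} invoking a range-capacity estimate for a random walk crossing a scale-$R$ annulus where $\nu\le\alpha/2$ controls the $\log$ at the borderline --- is essentially the paper's. But there is a genuine error in the upper-bound derivation. The factor $\mathrm{cap}(K)(\zeta R)^{-\nu}$ does \emph{not} arise from $P_y\big(H_{B(x,\zeta R)^{\mathsf c}}<\infty\big)$: since the walk is transient and $K\subset B(x,\zeta R)$, that probability is identically $1$. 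It arises from the probability of \emph{returning} to $K$ after exiting $B(x,\zeta R)$. Your display has the right event (return to $K$ after visiting $B^{\mathsf c}$), but the next sentence replaces it by the exit probability, and the subsequent ``more carefully'' clause would then produce an extra factor of a capacity on top of $\sum_y e_K(y)=\mathrm{cap}(K)$, giving $\mathrm{cap}(K)^2(\zeta R)^{-\nu}$ rather than $\mathrm{cap}(K)(\zeta R)^{-\nu}$. The clean chain is: after re-rooting, the one-excursion contribution is $\sum_{y\in\partial K}P_y(R_1<\infty,\,Z_{R_1}=y)\le \sum_y E_y[g(Z_{T_B},y)]e_K(y)\le C(\zeta R)^{-\nu}\mathrm{cap}(K)$. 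Moreover, you do not address loops making $k\ge 2$ excursions between $K$ and $B^{\mathsf c}$; the paper controls these via an iterated elliptic Harnack bound showing the $k$-excursion term is at most a $k$-th power of a quantity $\le 1-c$ once $\zeta\ge C$, hence summable.

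For the lower bound your concern that conditioning the loop on connecting $K$ to $B^{\mathsf c}$ may distort the law of the annulus-crossing excursion is exactly the right concern, but your proposed fix (absorb the conditioning into events of bounded Radon--Nikodym derivative via Harnack) is not what the paper does and would need careful justification. The paper sidesteps the issue: retain only loops making exactly one excursion ($\kappa(\gamma)=1$), re-root at the first hit of $K$, and require the capacity to be produced on the \emph{return} leg, within $B(Z_{D_1},R)$ --- a window which, for $\zeta\ge C$, is visited strictly before hitting $K$. The strong Markov property at $T_{B(z,R)}$ then factors the estimate exactly into (i) a range-capacity event for an \emph{unconditioned} walk and (ii) a hitting estimate for $K$, so no Radon--Nikodym control is required. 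The random-walk input you correctly identify as the crux is \cite[Lemma~5.3]{DrePreRod5} applied with $K=\emptyset$ (a close cousin of the estimates in \cite{DrePreRod2} you mention); the second-moment/Paley--Zygmund route you sketch is a reasonable alternative derivation of that same input.
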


\begin{proof}
We start by showing \eqref{eq:mu-UB}. Abbreviate $B=B(x, \zeta R)$, and
write $Z=Z(\gamma )$ for the discrete skeleton of the loop $\gamma $. Let
$R_{0} = H_{K}= \inf \{ n \geq 0 : Z_{n} \in K\}$, with the convention
$\inf \emptyset = \infty $, and for $k \geq 0$, let
$D_{k+1} = R_{k} + T_{B} \circ \theta _{R_{k}}$, where we recall the notation
$T_{B}=H_{B^{\mathsf{c}}}$, when $R_{k} < \infty $ and
$D_{k}= \infty $ otherwise, and
$R_{k+1}= D_{k+1} + H_{K} \circ \theta _{D_{k+1}} $ (possibly infinite).
The random variable
$\kappa = \kappa (\gamma )= |\{ k \geq 1 : R_{k} \leq N(\gamma ) \}|$,
which denotes the number of times $\gamma $ returns to $K$, is always positive
(and finite) on the event appearing on the left-hand side of
\eqref{eq:mu-UB}. Thus, using \eqref{eq:loops-mu-disc}, decomposing over
the value of $\kappa =k$ and rerooting the (discrete, rooted) loop
$Z$ at $Z_{R_{i}}$, where $i\in{\{1,\dots ,k\}}$ is chosen uniformly at
random, which produces a factor $\frac{n}{k}$, one obtains that
%
\begin{equation}
\label{eq:mu-UB-pf1} \mu \bigl( K \stackrel{\gamma} {\longleftrightarrow} B(x, \zeta
R)^{
\mathsf{c}} \bigr) = \sum_{k \geq 1}
\frac{1}{k} \sum_{y \in \partial K}
P_{y}(R_{k}<\infty , Z_{R_{k}}=y),
\end{equation}
with $\partial K$ denoting the interior boundary of the set $K$. For
$k \geq 1$, one has for all $y \in B(x,R)$, applying the Markov property
at time $D_{k-1}$ and noting that, for $z \in K$ the function
$u \mapsto P_{u}(R_{1}<\infty ,Z_{R_{1}}=z)$ is harmonic in $B$,
%
\begin{align}
\label{eq:mu-UB-pf2} %
\begin{aligned} &P_{y}(R_{k}<
\infty , Z_{R_{k}}=z)
\\
&\quad = \sum_{u \in \partial K} P_{y}(R_{k-1}<
\infty , Z_{R_{k-1}}=u)P_{u}(R_{1}< \infty
,Z_{R_{1}}=z)
\\
&\quad \!\!\!\stackrel{\text{\eqref{eq:Harnack}}} {\leq} \sum_{u \in \partial K}
P_{y}(R_{k-1}< \infty , Z_{R_{k-1}}=u)
\Cr{c:Harnack} P_{x}(R_{1}<\infty ,Z_{R_{1}}=z)
\\
&\quad \leq \Cr{c:Harnack} P_{y}(R_{k-1}<\infty )
P_{x}(R_{1} < \infty , Z_{R_{1}}= z).
\end{aligned} 
\end{align}
By a straightforward induction argument, one finds that
$P_{y}(R_{k}<\infty ,Z_{R_{k}}=z)$ is bounded from above by
$(\Cr{c:Harnack}P_{x}(R_{1}<\infty ))^{k-1}P_{x}(R_{1}<\infty ,Z_{R_{k}}=z)$.
We now choose $z=y$, sum over $y\in{\partial K}$, and plug the resulting
estimate into \eqref{eq:mu-UB-pf1}. Consequently, using that
$P_{x}(R_{1}<\infty ) \leq \sup_{z \in B^{\mathsf c}} P_{z}(H_{K} <
\infty )$, which is, at most,
$c \cdot \text{cap}(K) (\zeta R)^{-\nu}$ by a last-exit decomposition and
\eqref{eq:intro_Green} (in particular, this implies that
$\Cr{c:Harnack}P_{x}(R_{1}<\infty ) \leq 1-c$ for some $c> 0$ upon possibly
taking $\zeta \geq C$ and hence the convergence of the resulting geometric
series in \eqref{eq:mu-UB-pf1}), the bound \eqref{eq:mu-UB} readily follows.

To deduce \eqref{eq:mu-LB}, one first observes, recalling the discussion
that led to \eqref{eq:mu-UB-pf1}, that $\kappa (\gamma ) \geq 1$ on the
event appearing on the left-hand side of \eqref{eq:mu-LB} if
$\gamma $ is rooted in $K$. To obtain a lower bound, one simply retains
loops $\gamma $ with $\kappa (\gamma )=1$, reroots such loops similarly
as above \eqref{eq:mu-UB-pf1}, and requires the required capacity to be
generated ``on the way back,'' that is,~after time $D_{1}$ and before exiting
the ball of radius $R$ around $Z_{D_{1}}$ (which occurs before hitting
$K$ if $\lambda _{0}>2$), to find that, for all $ t \geq 1$,
%
\begin{align}
\label{eq:mu-UB-pf3} %
\begin{aligned} &\mu \biggl( K \stackrel{\gamma} {
\longleftrightarrow} B(x, \zeta R)^{
\mathsf{c}} , \operatorname{cap}(\gamma )
\geq \frac{R^{\nu}}{t (\log R)^{b_{2}}} \biggr)
\\
&\quad \geq \inf_{z \in \partial _{\text{out}}B } P_{z} \biggl(H_{K}<
\infty , \operatorname{cap} (Z_{[0, T_{B(z,R)}]} ) \geq \frac{R^{\nu}}{t (\log R)^{b_{2}}}
\biggr), \end{aligned} 
\end{align}
where
$Z_{[0,s]}= \{ x \in {G}: Z_{t} = x \text{ for some } t \in [0,s]\} $ and
$\partial _{\text{out}}B=\{x\in{B^{\mathsf{c}}}:\exists  y\in{B}
\text{ with }y\sim x\}$. Applying the Markov property at time
$T_{B(z,R)}$ and noting that $X_{T_{B(z,R)}}\in{B(x, (\zeta +C)R)}$ for
all $z\in{\partial _{\text{out}}B}$ by \cite{DrePreRod2}, (2.8), the probability
in the second line of \eqref{eq:mu-UB-pf3} is bounded from below by
\begin{equation*}
\inf_{z \in G } P_{z} \biggl(\operatorname{cap}
(Z_{[0, T_{B(z,R)}]} ) \geq \frac{R^{\nu}}{t (\log R)^{b_{2}}} \biggr) \cdot \inf
_{z'
\in B(x,(\zeta +C)R)} P_{z'}(H_{K}<\infty ).
\end{equation*}
Regarding the second factor, by a similar computation as above involving
a last-exit decomposition and the lower bounds in
\eqref{eq:intro_Green}, it is bounded from below by
$c \cdot \operatorname{cap}(K)(\zeta R)^{-\nu}$. With respect to the first
factor, observe that, applying \cite{DrePreRod5}, Lemma~5.3, with $K$ as
appearing therein chosen as $K= \emptyset $ and $t= C$ sufficiently large,
one infers that the infimum over $z$ is bounded away from zero by
$c>0$, whence \eqref{eq:mu-LB} follows.
\end{proof}

\section{Critical one-arm probability}
\label{sec:1-arm}

In this section we prove Theorem~\ref{T:1arm-crit}. We start with some
preparation and consider the scales
%
\begin{equation}
\label{eq:R-form} R= \Cl{CKL}(\ell +1) L\quad \text{for } \ell , L \geq
2, 
\end{equation}
with $\Cr{CKL}$ to be chosen momentarily (see \eqref{eq:A_i-prop} below).
Recall the approximate lattice $\Lambda (L)$ from the beginning of Section~\ref{sec:pre}.
We define $\mathcal{A}_{k} \subset \Lambda (L)$ for
$1 \leq k \leq \ell $ as
%
\begin{equation}
\label{eq:A_i} \mathcal{A}_{k} = \bigl\{ x \in \Lambda (L) :
B(x,L) \cap \partial B_{
\Cr{CKL} k L } \neq \emptyset \bigr\}, 
\end{equation}
and the associated ``annulus''
%
\begin{equation}
\label{eq:AA_i} \mathbb{A}_{k}= \bigcup
_{x \in \mathcal{A}_{k}} B(x,L) \ (\subset G). 
\end{equation}
Henceforth, the constant $\Cr{CKL}$ in \eqref{eq:R-form} is fixed so that,
for all $\ell , L \geq 2$ and $k, k' \in \{1, \ldots , \ell \}$,
%
\begin{equation}
\label{eq:A_i-prop} \mathbb{A}_{k} \subset B_{R}\quad
\text{and}\quad  d(\mathbb{A}_{k},\mathbb{A}_{k'})
\geq \bigl\llvert k-k' \bigr\rrvert L, 
\end{equation}
which is always possible by \eqref{eq:A_i} and
\cite{DrePreRod2}, (2.8). We now consider the continuous loop soup
${\mathcal{L}}$ of intensity $ \alpha =\frac{1}{2}$ on
$\widetilde{\mathcal{G}}$ with canonical law ${\mathbb{Q}}$; see Section~\ref{sec:loops}
for details, and in the sequel we refer to as ``loop'' any element in the
support of its intensity measure. For given $\delta >0$ and
$L \geq 1$, we will call a loop \textit{small} if
%
\begin{equation}
\label{eq:small-loop} \text{cap}(\gamma )< \delta L^{\nu} (\log
L)^{-b_{2}}, 
\end{equation}
and \textit{big} otherwise. With this terminology the point measures
${\mathcal{L}}^{\mathsf{b}}_{k}$ for $1\leq k \leq \ell $ are defined as follows.
If ${\mathcal{L}}= \sum_{i} \delta _{\gamma _{i}}$ refers to a generic
realization, then
%
\begin{equation}
\label{eq:LS-D} {\mathcal{L}}_{k}^{\mathsf{b}} =\sum
_{i} \delta _{\gamma _{i}} 1 \bigl\{ \gamma
_{i} \text{ is big and } \operatorname{range}(\gamma _{i})
\subset \widetilde{B}(x,L) \text{ for some } x\in \mathcal{A}_{k}
\bigr\}, 
\end{equation}
where with hopefully obvious notation,
$\widetilde{B}(x,L) \subset \widetilde{\mathcal{G}}$ is obtained by adding
all the cables between neighboring vertices in $B(x,L)$. Recalling from
\eqref{eq:C_s} the cluster ${\mathcal C}$ of the origin in the loop soup
$\mathcal{L}$, we consider the events
%
\begin{equation}
\label{eq:B-i} \mathbf{B}_{k} = \begin{Bmatrix}
\text{$\mathcal{C} \cap B_{R}^{\mathsf{c}} \neq \emptyset $ and
$\mathcal{C}$ does not contain the}
\\
\text{trace of any loop in the support of ${\mathcal{L}}_{k}^{\mathsf{b}}$} \end{Bmatrix}. 
\end{equation}
Note that \eqref{eq:B-i} depends implicitly on the choice of parameters
$\delta $, $\ell $, and $L$ (furthermore, $\mathbf{B}_{k}$ is measurable
since the restrictions in \eqref{eq:small-loop} and \eqref{eq:LS-D} induce
measurable constraints). Intuitively, the event $\mathbf{B}_{k}$ refers
to an annulus in which certain big loops are avoided by
$\mathcal{C}$ (which we think of as being bad). Our interest will be in
the quantity
%
\begin{equation}
\label{eq:N-bad} {N}= {N}_{\delta , \ell ,L} \stackrel{\mathrm{def.}} {=} \sum
_{1
\leq k \leq \ell} 1_{\mathbf{B}_{k}}. 
\end{equation}
The following result is key to the proof of Theorem~\ref{T:1arm-crit}.
It asserts that large families of bad annuli (i.e., $k$'s such that
$\mathbf{B}_{k}$ occurs) are typically rare. Importantly, the quantitative
error bound is sufficiently sharp.

\begin{prop}
\label{P:N-bad}
For all integers $\ell , L \geq 2$,
$\delta , \rho \in (0,\frac{1}{2})$ such that
$ L^{-c} \leq \delta \leq \Cl[c]{C:N-bad-1}(\log \ell )^{-
\frac{\nu}{\alpha}}$ and
$\rho \log (\rho ^{-1})\leq \Cl[c]{C:N-bad-6}\delta (\log L)^{-b_{2}}$,
one has
%
\begin{equation}
\label{eq:N-bad-LD} {\mathbb{Q}} \bigl(N\geq (1-\rho ) \ell \bigr) \leq C (\ell
L)^{\alpha } \exp \biggl\{ - \frac{\Cl[c]{C:N-bad-2}\delta \ell}{(\log L)^{b_{2}}} \biggr\}. 
\end{equation}
\end{prop}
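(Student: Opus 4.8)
The plan is to bound $\Q(N \geq (1-\rho)\ell)$ by a union bound over the choice of the set $S \subset \{1,\dots,\ell\}$ of "bad" indices, $|S| \geq (1-\rho)\ell$, combined with a multiscale / exploration argument that controls $\Q(\bigcap_{k \in S}\mathbf{B}_k)$. The number of such sets is at most $2^{H(\rho)\ell}$ with $H$ the binary entropy, which under the hypothesis $\rho\log(\rho^{-1}) \leq \Cr{C:N-bad-6}\delta\log(L)^{-\btwo}$ is swallowed by the main exponential term $\exp\{-\Cr{C:N-bad-2}\delta\ell/\log(L)^{\btwo}\}$, provided the per-annulus cost is of order $\exp\{-c\delta/\log(L)^{\btwo}\}$. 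So the crux is: \emph{conditionally on everything explored in annuli $\mathbb{A}_{k'}$, $k' \in S$, $k' < k$, the conditional probability that $\mathbf{B}_k$ occurs is at most $1 - c\delta\log(L)^{-\btwo}$}, uniformly.

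To extract this per-annulus gain I would explore the loop-soup cluster $\mathcal{C}$ of the origin from inside out, revealing annulus by annulus. On the event $\{\mathcal{C} \cap B_R^{\mathsf c} \neq \emptyset\}$, the cluster must cross each annulus $\mathbb{A}_k$; hence there is (at least) one box $B(x,L)$, $x \in \A_k$, through which $\mathcal{C}$ passes, i.e.\ $\mathcal{C}$ contains the trace of some loop meeting both $\partial B_{\Cr{CKL}(k-1)L}$-side and $\partial B_{\Cr{CKL}(k+1)L}$-side of that box, so in particular $\mathcal{C}\cap B(x,L)$ has capacity at least of the order of a line crossing $B(x,L)$, i.e.\ $\gtrsim L^{\nu - \bone}$ or so (here the assumption \eqref{eq:restr-params} and the capacity-of-a-line estimates are used). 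The point is then that a \emph{big} loop contained in $\widetilde B(x,L)$ — i.e.\ one counted by $\mathcal{L}_k^b$ — has a fair chance, \emph{conditionally on there being a crossing piece of $\mathcal{C}$ in $B(x,L)$}, to touch that crossing piece and hence be absorbed into $\mathcal{C}$; if it is, then $\mathbf{B}_k$ fails. Quantitatively, by \eqref{eq:LS-rest} the loops inside disjoint boxes $\widetilde B(x,L)$ are independent across $x$ and independent of the loops outside, and Lemma \ref{L:loop-intensities} (applied at scale $L$ with $K$ the crossing piece of $\mathcal{C}$, or a single site of it) gives that the $\mu$-mass of big loops in $\widetilde B(x,L)$ that hit $K$ is $\gtrsim \mathrm{cap}(K)\cdot L^{-\nu} \gtrsim \delta \log(L)^{-\btwo}$ after using $\mathrm{cap}(K)\gtrsim \delta L^\nu \log(L)^{-\btwo}$ coming from the fact that $\mathcal{C}$ itself reached distance $R$, hence contains big loops; so the probability that $\mathcal{L}_k^b$ has no loop hitting $\mathcal{C}\cap B(x,L)$ is at most $\exp\{-c\delta\log(L)^{-\btwo}\}$. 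Multiplying these conditionally independent factors over $k \in S$ yields $\Q(\bigcap_{k\in S}\mathbf{B}_k) \leq \exp\{-c\delta|S|\log(L)^{-\btwo}\} \leq \exp\{-c\delta(1-\rho)\ell\log(L)^{-\btwo}\}$, and since $\rho < \frac12$ this is $\leq \exp\{-\tfrac{c}{2}\delta\ell\log(L)^{-\btwo}\}$.

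Assembling: $\Q(N \geq (1-\rho)\ell) \leq \binom{\ell}{\lceil\rho\ell\rceil}\exp\{-\tfrac c2 \delta\ell\log(L)^{-\btwo}\} \leq \exp\{C\rho\log(\rho^{-1})\ell\}\exp\{-\tfrac c2 \delta\ell\log(L)^{-\btwo}\}$, and the hypothesis on $\rho$ makes the first factor at most $\exp\{\tfrac c4\delta\ell\log(L)^{-\btwo}\}$, giving the bound $\exp\{-\tfrac c4\delta\ell\log(L)^{-\btwo}\}$; the polynomial prefactor $C(\ell L)^\alpha$ absorbs any lower-order terms (e.g.\ the number of choices of box $x\in\A_k$ in each annulus, bounded via \eqref{eq:defLambda}, and the passage between "$\mathcal{C}$ reaches $B_R^{\mathsf c}$" and the crossing structure). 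The conditions $L^{-c}\leq\delta$ and $\delta\leq\Cr{C:N-bad-1}\log(\ell)^{-\nu/\alpha}$ enter to guarantee respectively that big loops are not too rare at scale $L$ (so the capacity lower bound $\mathrm{cap}(\mathcal{C}\cap B(x,L))\gtrsim \delta L^\nu\log(L)^{-\btwo}$ is actually achieved with good probability when $\mathcal{C}$ crosses) and that the definition of "big" is consistent across all $\ell$ scales being probed, i.e.\ that a typical crossing cluster, which must contain $\Theta(\ell)$ independent "crossing events," indeed generates enough capacity at each scale — roughly, the total capacity $\mathrm{cap}(\mathcal{C})\lesssim R^\nu \sim (\ell L)^\nu$ must accommodate $\ell$ contributions of size $\delta L^\nu\log(L)^{-\btwo}$ each, forcing $\delta\ell \lesssim$ a power of $\log\ell$ type bound. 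I expect the main obstacle to be making the conditional exploration rigorous — specifying the $\sigma$-algebra revealed up to and including annulus $k$ in a way that (i) determines whether $\mathbf{B}_{k'}$ occurred for $k'<k$, (ii) determines a crossing site $x\in\A_k$ with $\mathrm{cap}(\mathcal{C}\cap B(x,L))$ large, and (iii) leaves the big loops inside $\widetilde B(x,L)$ still conditionally fresh (or at least stochastically dominated below) so that Lemma \ref{L:loop-intensities} applies — this is where the restriction property \eqref{eq:LS-rest} and a careful sequential conditioning must be combined, and where the factor $\log(L)^{-\btwo}$ in the case $\alpha = 2\nu$ must be tracked with care through the capacity-of-a-random-walk estimates.
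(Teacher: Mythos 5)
Your outer shell is correct and matches the paper: a union bound over the choice of the set $D\subset\{1,\dots,\ell\}$ of bad indices, the entropy estimate $\binom{\ell}{\lceil\rho\ell\rceil}\leq e^{C\rho\log(\rho^{-1})\ell}$, and the hypothesis on $\rho$ to absorb it. The gap lies in the per-annulus estimate. You want to show that, conditionally on the cluster $\mathcal{C}$ having crossed annuli $k'<k$ while avoiding all big loops there, the conditional probability of $\mathbf{B}_k$ is at most $1 - c\delta\log(L)^{-\btwo}$, and you derive this from the claim $\mathrm{cap}(\mathcal{C}\cap B(x,L))\gtrsim \delta L^{\nu}\log(L)^{-\btwo}$ for some $x\in\mathcal{A}_k$, which you justify by ``coming from the fact that $\mathcal{C}$ itself reached distance $R$, hence contains big loops.'' This is circular: precisely on $\bigcap_{k'}\mathbf{B}_{k'}$, the cluster $\mathcal{C}$ has \emph{not} absorbed any big loop confined to a box $\widetilde{B}(x,L)$ in those annuli, and there is no a priori reason for its crossing piece in a box to have capacity comparable to that of a big loop. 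The only unconditional lower bound available for a connected set crossing a box of side $L$ is the capacity of a line, which is of order $L^\nu/\log L$ when $\nu=1$ and $L^{1}$ when $\nu>1$. Feeding this into Lemma~\ref{L:loop-intensities} gives a per-annulus gain of order $1/\log L$ (resp.\ $L^{1-\nu}$), and in the regime where Proposition~\ref{P:N-bad} is actually applied in the proof of Theorem~\ref{T:1arm-crit} one has $\delta\asymp(\log\log L)^{-\nu/\alpha}\gg 1/\log L\gg L^{1-\nu}$, so this falls strictly short of the required $\delta\log(L)^{-\btwo}$. Consequently the exploration, even if it could be set up rigorously (which is itself delicate, since revealing whether $\mathbf{B}_{k'}$ occurs requires the full cluster $\mathcal{C}$ and hence loops in annulus $k$ as well, so the big loops in $\mathbb{A}_k$ are not conditionally fresh), would produce a bound of the form $\exp\{-c\ell/\log L\}$ rather than $\exp\{-c\delta\ell/\log(L)^{\btwo}\}$, which is not strong enough.

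The paper's argument avoids this by flipping the logic and never lower-bounding the capacity of the conditioned cluster. It instead makes an \emph{unconditional} statement about the environment: the union $\mathcal{O}$ of big loops over all annuli in $D$ is, with probability at least $1-\exp\{-c\delta^{-\alpha/\nu}|D|\}$, a $(L,R,|D|/2,\kappa)$-good obstacle with $\kappa=\delta L^\nu\log(L)^{-\btwo}$ (Lemma~\ref{L:fpp-ann}); this only requires big loops to be plentiful, which follows from Lemma~\ref{L:loop-intensities} and Poisson concentration, and has nothing to do with $\mathcal{C}$. To reconcile the fact that $\mathcal{C}$ is not independent of $\mathcal{O}$, the proof then resamples the big loops: it writes $\widecheck{\L}=\widecheck{\L}_D^b+(\L-\L_D^b)$ with $\widecheck{\L}_D^b$ an i.i.d.\ copy of $\L_D^b$, so that $\widecheck{\L}\stackrel{\mathrm{law}}{=}\L$ is independent of $\widetilde{\mathcal{I}}_D^b$, and uses the monotonicity $\L_{\tilde{\G}\setminus\widetilde{\mathcal{I}}_D^b}\leq\widecheck{\L}_{\tilde{\G}\setminus\widetilde{\mathcal{I}}_D^b}$ to bound $\Q(\mathbf{B}_D,\mathbf{G})$ by the crossing probability of a loop soup with infinite killing on $\mathcal{O}$. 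Finally, Lupu's isomorphism in the form of Corollary~\ref{cor:2pointkilled} converts that killed loop-soup crossing probability into a random walk hitting probability $P_0(H_x<H_{\mathcal{O}})$, and it is the \emph{random walk} — not the crossing piece of a conditioned cluster — that must survive $\sim\ell$ boxes, each containing a set of capacity $\kappa$, paying $1-c\kappa/L^\nu=1-c\delta\log(L)^{-\btwo}$ per box (Lemma~\ref{lem:hittingobstacle}). This is the step that produces the factor $\delta\log(L)^{-\btwo}$ you need but cannot obtain from line capacity, and it is the essential idea missing from your proposal.
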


We will prove Proposition~\ref{P:N-bad} further below and first show how
to deduce Theorem~\ref{T:1arm-crit} from it. This involves an estimate
on the capacity of the union of many big loops, such as the cluster
$\mathcal{C}$ on the complement of the event in \eqref{eq:N-bad-LD}. The
following lemma gives a lower bound which is not far from additive.

\begin{lem}%
\label{L:cap-LB}
For all integers $\ell , L \geq 2$, the following holds. Let
$I \subset \{1,\dots , \ell \}$, and assume that
$S_{k} \subset \mathbb{A}_{k}$ with $\text{cap}(S_{k}) \geq \eta $ for each
$k \in I$. Then recalling $b_{1}$ from \eqref{eq:bone}, one has
%
\begin{equation}
\label{eq:cap-loops-LB} \operatorname{cap} \biggl( \bigcup
_{k \in I} S_{k} \biggr) \geq \Cl[c]{ccapcalC}
\llvert I \rrvert \Bigl( L^{\nu}{ \bigl(\log \bigl(1+ \llvert I \rrvert
\bigr) \bigr)^{-b_{1}}} \wedge \inf_{k \in I}
\operatorname{cap}(S_{k}) \Bigr). 
\end{equation}
\end{lem}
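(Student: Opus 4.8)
\textbf{Proof plan for Lemma~\ref{L:cap-LB}.}

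The plan is to exploit the fact that the sets $S_k$ live in well-separated annuli $\mathbb{A}_k$ — by \eqref{eq:A_i-prop} one has $d(\mathbb{A}_k,\mathbb{A}_{k'}) \geq |k-k'|L$ — together with the near-additivity of capacity for well-separated sets under the Green's function decay \eqref{eq:intro_Green}. Recall the standard variational characterization $\mathrm{cap}(U)^{-1} = \inf\{ \langle \mu, g\mu\rangle : \mu \text{ a probability measure on } U\}$, where $\langle \mu, g\mu\rangle = \sum_{x,y} g(x,y)\mu(x)\mu(y)$. Writing $U = \bigcup_{k\in I} S_k$, the natural trial measure is $\mu = \sum_{k\in I} \frac{1}{|I|} \mu_k$, where $\mu_k$ is the (normalized) equilibrium measure of $S_k$, so that $\langle \mu_k, g\mu_k\rangle = \mathrm{cap}(S_k)^{-1}$. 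Then
\[
\langle \mu, g\mu\rangle = \frac{1}{|I|^2}\sum_{k\in I} \langle \mu_k, g\mu_k\rangle + \frac{1}{|I|^2}\sum_{\substack{k,k'\in I\\ k\neq k'}} \langle \mu_k, g\mu_{k'}\rangle \leq \frac{1}{|I|}\sup_{k\in I}\mathrm{cap}(S_k)^{-1} + \frac{1}{|I|^2}\sum_{\substack{k,k'\in I\\ k\neq k'}} \langle \mu_k, g\mu_{k'}\rangle.
\]
The first term gives the contribution $\frac{1}{|I|}\sup_k \mathrm{cap}(S_k)^{-1}$, which after inversion yields $|I| \inf_k \mathrm{cap}(S_k)$ as desired; the whole point is to show that the cross terms $\langle \mu_k, g\mu_{k'}\rangle$ are dominated by the diagonal contribution (up to the logarithmic factor when $\nu = 1$).

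For the cross terms, since $S_k \subset \mathbb{A}_k \subset B_R$ and $\mathbb{A}_k$ sits roughly at distance $\Cr{CKL} kL$ from the origin with the annuli $|k-k'|L$-separated, the bound $g(x,y) \leq C d(x,y)^{-\nu} \leq C(|k-k'|L)^{-\nu}$ holds for $x\in S_k$, $y\in S_{k'}$. Hence $\langle \mu_k, g\mu_{k'}\rangle \leq C(|k-k'|L)^{-\nu}$, and
\[
\frac{1}{|I|^2}\sum_{\substack{k,k'\in I\\ k\neq k'}} \langle \mu_k, g\mu_{k'}\rangle \leq \frac{C}{|I|^2 L^\nu}\sum_{\substack{k,k'\in I\\ k\neq k'}} |k-k'|^{-\nu} \leq \frac{C}{|I|^2 L^\nu}\cdot |I| \sum_{1\leq j \leq |I|} j^{-\nu} = \frac{C}{|I| L^\nu}\sum_{1\leq j\leq |I|} j^{-\nu}.
\]
Now $\sum_{1\leq j\leq |I|} j^{-\nu} \leq C$ when $\nu > 1$ and $\leq C\log(1+|I|)$ when $\nu = 1$ (we never need $\nu<1$ here since \eqref{eq:restr-params} is in force), i.e. $\sum_{1\leq j\leq |I|} j^{-\nu} \leq C\log(1+|I|)^{\bone}$. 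This bounds the cross-term total by $\frac{C\log(1+|I|)^{\bone}}{|I|L^\nu}$. Combining,
\[
\langle \mu, g\mu\rangle \leq \frac{1}{|I|}\Big(\sup_{k\in I}\mathrm{cap}(S_k)^{-1} + \frac{C\log(1+|I|)^{\bone}}{L^\nu}\Big) \leq \frac{2}{|I|}\max\Big\{\sup_{k\in I}\mathrm{cap}(S_k)^{-1},\, \frac{C\log(1+|I|)^{\bone}}{L^\nu}\Big\},
\]
so that $\mathrm{cap}(U) \geq \langle\mu,g\mu\rangle^{-1} \geq \frac{|I|}{2}\min\{\inf_k \mathrm{cap}(S_k),\, c L^\nu \log(1+|I|)^{-\bone}\}$, which is \eqref{eq:cap-loops-LB} after renaming constants.

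\textbf{Remarks on the obstacle.} The one genuinely delicate point is the cross-term estimate: it relies on the geometric fact that each $S_k$ is confined to the annulus $\mathbb{A}_k$, and that these annuli are at mutual distance at least $|k-k'|L$, so that the off-diagonal Green's function values decay at least like $(|k-k'|L)^{-\nu}$ — here one must be slightly careful that for two indices $k,k'$ the relevant separation is $|k-k'|L$ rather than a fixed multiple of $L$, so the sum over pairs produces $\sum_j j^{-\nu}$ rather than $|I|$; it is precisely the borderline divergence of this sum at $\nu=1$ that forces the logarithmic correction $\log(1+|I|)^{\bone}$ and explains the appearance of $\bone$ from \eqref{eq:bone}. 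Everything else — the variational principle, the normalization of equilibrium measures, and the elementary bound $\sum_{1\leq j\leq m} j^{-\nu}$ — is routine. One should also note that the claimed inequality only uses $\mathrm{cap}(S_k)\geq\eta$ implicitly through $\inf_{k\in I}\mathrm{cap}(S_k)$, so no uniform lower bound is actually needed in the argument itself; the hypothesis $\mathrm{cap}(S_k)\geq\eta$ is there for the application.
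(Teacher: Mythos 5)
Your proof is correct and follows essentially the same route as the paper: it uses the variational characterization of capacity \eqref{eq:capVar} with the trial measure $\mu=\frac{1}{|I|}\sum_{k\in I}\bar e_{S_k}$, splits into diagonal and cross terms, and bounds the cross terms via $g(x,y)\leq C(|k-k'|L)^{-\nu}$ using the separation \eqref{eq:A_i-prop} and \eqref{eq:intro_Green}, which produces exactly the $\sum_j j^{-\nu}$ term whose borderline divergence at $\nu=1$ explains the factor $\log(1+|I|)^{\bone}$. The only (cosmetic) difference is that you bound the diagonal sum by $|I|^{-1}\sup_k\mathrm{cap}(S_k)^{-1}$ before inverting, whereas the paper leaves it as $\frac{1}{|I|^2}\sum_k\mathrm{cap}(S_k)^{-1}$; both lead to the same conclusion.
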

\begin{proof}
By a classical variational characterization of capacity (see, for instance,
\cite{MR2932978}, Proposition~1.9, for a proof on finite graphs), for any
$S \subset G$ one has that
%
\begin{equation}
\label{eq:capVar} \operatorname{cap} ( S ) = \biggl(\inf_{\mu}
\sum_{x,y \in G} \mu (x) g(x,y) \mu (y)
\biggr)^{-1}, 
\end{equation}
where the infimum ranges over all probability measures $\mu $ supported
on $S$. Consider the measure
%
\begin{equation}
\label{eq:mu-cap} \mu (x)\stackrel{\mathrm{def.}} {=} \frac{1}{ \llvert I \rrvert }\sum
_{k \in I} \bar{e}_{S_{k}}(x), \quad x \in G,
\end{equation}
where $\bar{e}_{S_{k}}= {e}_{S_{k}}/\text{cap}(S_{k})$ denotes the normalized
equilibrium measure, so $\mu $ is indeed a probability measure supported
on $\bigcup_{k \in I} S_{k}$. Evaluating the sum on the right-hand side
of \eqref{eq:capVar} at $\mu $ from \eqref{eq:mu-cap} and noting that,
for all $x \in S_{k}$, $y \in S_{j}$, $1\leq k\neq j \leq \ell $, the bound
$g(x,y) \leq C d(\mathbb{A}_{k}, \mathbb{A}_{j})^{-\nu} \leq CL^{-\nu}|j-k|^{-
\nu}$ holds owing to \eqref{eq:intro_Green} and \eqref{eq:A_i-prop}, it
follows that
\begin{align*}
\operatorname{cap} \biggl( \bigcup_{k \in I}
S_{k} \biggr)^{-1} &\leq \frac{1}{ \llvert I \rrvert ^{2}} \sum
_{k,j \in I} \sum_{x \in S_{k}}
\sum_{y \in S_{j}} \bar{e}_{S_{k}}(x) g(x,y)
\bar{e}_{S_{j}}(y)
\\
&\leq \frac{1}{ \llvert I \rrvert ^{2}} \sum_{k \in I}
\frac{1}{\text{cap}(S_{k})} + \frac {2}{ \llvert I \rrvert } \sum_{n=1}^{ \llvert I \rrvert }
C(nL)^{-\nu}.
\end{align*}
Here the first term in the last display emanates from the on-diagonal sums
for $k=j$ using \eqref{eq:lastexit} and the definition
$\bar{e}_{S_{k}}= {e}_{S_{k}}/\text{cap}(S_{k})$, while to obtain the second
term one also takes advantage of assumption \eqref{eq:intro_Green}. This
now entails \eqref{eq:cap-loops-LB}.
\end{proof}
Using Proposition~\ref{P:N-bad} and Lemma~\ref{L:cap-LB}, we now give the
proof of our first main result.

\begin{proof}[Proof of Theorem~\ref{T:1arm-crit}]
On account of \eqref{eq:iso-LS} and below, and since $\varphi $ and
$-\varphi $ have the same law under $\mathbb{P}$, one has that
%
\begin{equation}
\label{eq:pf-Th1-1}
\mathbb{P}(0 \leftrightarrow B_{R}^{\mathsf{c}}) = \frac{1}{2}
\mathbb{Q}({\mathcal{C}} \cap B_{R}^{\mathsf{c}} \neq \emptyset ).
\end{equation}
As a consequence, it is sufficient to upper bound the quantity on the right-hand
side of \eqref{eq:pf-Th1-1}. It is further sufficient to assume that
$R \geq C$, which will be tacitly supposed from here on. In particular,
no loss of generality is incurred by assuming that $R$ is of the form
\eqref{eq:R-form} and by proving the statement for a suitable choice of
$\ell >1$ (see \eqref{eq:pf-Th1-params} below) and all $L \geq C$.

As we now explain, for all $\ell >1$, $L \geq C$, $\delta \in (0,c)$, and
$\rho \in (0,1)$, with
%
\begin{equation}
\label{eq:def_eta} \eta =\Cr{ccapcalC}\rho \ell L^{\nu} \biggl(
\frac{\delta }{ (\log L)^{b_{2}} }\wedge \frac{1}{(\log (1+\rho \ell ))^{b_{1}} } \biggr) 
\end{equation}
(see Lemma~\ref{L:cap-LB} regarding $\Cr{ccapcalC}$), one has the inclusion
%
\begin{equation}
\label{eq:pf-Th1-2} \bigl\{{\mathcal{C}} \cap B_{R}^{\mathsf{c}}
\neq \emptyset , \text{cap}(\mathcal{C}) < \eta \bigr\} \subset \bigl\{ N
\geq (1-\rho )\ell \bigr\} 
\end{equation}
with the random variable $N$ as in \eqref{eq:N-bad}. To see this, suppose
that $\{N < (1-\rho ) \ell \}$. We now argue that the intersection of this
event with the one on the left-hand side of \eqref{eq:pf-Th1-2} is empty,
from which the desired inclusion in \eqref{eq:pf-Th1-2} follows. For this
purpose, consider the (random) set
$I =\{k: \mathbf{B}_{k} \text{ does not occur}\} \subset \{1,\dots ,
\ell \}$ so that
%
\begin{equation}
\label{eq:I-LB} \llvert I \rrvert =\ell - N > \rho \ell 
\end{equation}
on the event $\{N < (1-\rho ) \ell \}$. By definition of
$\mathbf{B}_{k}$ in \eqref{eq:B-i} and the defining property of big loops
(see \eqref{eq:small-loop}), one has on the event
$\{ {\mathcal{C}} \cap B_{R}^{\mathsf{c}} \neq \emptyset \}$ that
$\mathcal{C} \supset \bigcup_{k\in I} S_{k}$, where
$S_{k} \subset \mathbb{A}_{k}$ comprises at least the range of one loop
of capacity at least $\delta L^{\nu} (\log L)^{-b_{2}}$ for each
$k \in I$, whence
$\text{cap}(S_{k}) \geq \delta L^{\nu} (\log L)^{-b_{2}} $ by monotonicity
of the capacity. Applying Lemma~\ref{L:cap-LB}, this yields that
\begin{equation*}
\text{cap}(\mathcal{C}) \geq \Cr{ccapcalC} (\ell -N)L^{\nu}
\bigl( \bigl( \delta (\log L)^{-b_{2}} \bigr)\wedge \bigl(\log (1+\ell -N)
\bigr)^{-b_{1}} \bigr) \stackrel{\text{\eqref{eq:I-LB}}, \text{\eqref{eq:def_eta}}} {
\geq} \eta ,
\end{equation*}
and \eqref{eq:pf-Th1-2} follows.

Using \eqref{eq:pf-Th1-1}, \eqref{eq:pf-Th1-2}, the tail bound
$\mathbb{Q}(\text{cap}(\mathcal{C}) > t) \leq C t^{-1/2}$ (which holds by
\eqref{eq:iso-LS}) and \cite{DrePreRod5}, Corollary~1.3, it follows from
\eqref{eq:N-bad-LD} that under the assumptions of Proposition~\ref{P:N-bad},
%
\begin{align}
\label{eq:pf-Th1-3}
\begin{aligned}
\mathbb{P}(0 \leftrightarrow B_{R}^{\mathsf{c}}) &\leq \mathbb{Q}(
\text{cap}(\mathcal{C}) \geq \eta ) + \mathbb{Q}( N \geq (1-\rho )
\ell )
\\
&\leq C  \big( \eta ^{-1/2} + (\ell L)^{\alpha }\exp  \{ -
\Cr{C:N-bad-2} (\log L)^{-b_{2}} \delta \ell  \}  \big).
\end{aligned}
\end{align}
We now choose the parameters $\delta $, $\rho $, and $\ell $ so that they
satisfy the assumptions of Proposition~\ref{P:N-bad} and that the second
line of \eqref{eq:pf-Th1-3} is small. For $\gamma \in (0,1)$ to be specified
in a moment and for any $L>e^{4}$, let
%
\begin{equation}
\label{eq:pf-Th1-params} %
\begin{aligned} \rho &=\gamma (\log
L)^{-b_{2}} (\log \log L)^{-(b_{2}+
\frac{\nu}{\alpha})}(\log \log \log
L)^{-1+b_{2}},
\\
\ell &=\gamma ^{-1} (\log L)^{1+b_{2}}(\log \log
L)^{\nu /\alpha},
\\
\delta & = \Cr{C:N-bad-1} (\log \ell )^{-\nu /\alpha}. \end{aligned}
%
\end{equation}
Observe that $\gamma \log (1/\gamma ) \to 0$ as $\gamma \to 0$. Choosing
$\gamma $ small enough, one can thus ensure that
$\rho \log (1/\rho ) \leq \Cr{C:N-bad-6} \delta (\log L)^{-b_{2}}$ for
all $L \geq C$, and hence by our choice of $\delta $ and $\ell $, the assumptions
of Proposition~\ref{P:N-bad} are satisfied upon possibly further increasing
$L$. In particular, \eqref{eq:pf-Th1-3} is in force and, possibly reducing
the value of $\gamma $ even further so that
$\frac{\Cr{C:N-bad-2}\Cr{C:N-bad-1}}{\gamma}>\alpha +\frac{\nu}{2}$, the
second term in brackets in the second line of \eqref{eq:pf-Th1-3} is
$O(L^{-\nu /2-\varepsilon })$ for some $\varepsilon >0$ as
$L \to \infty $. In view of \eqref{eq:def_eta}, \eqref{eq:pf-Th1-3}, and
\eqref{eq:pf-Th1-params}, this means that the term $ \eta ^{-1/2} $ dominates,
and one obtains from \eqref{eq:def_eta}, noting further that
$\delta (\log L)^{-b_{2}}\geq (\log (1+\rho \ell ))^{-b_{1}}$ if and only
if $\nu = 1$, that for all $L \geq C$,
%
\begin{equation}
\label{eq:end-Th1}
\mathbb{P}(0 \leftrightarrow B_{R}^{\mathsf{c}})
 \leq C (\log \log \log L)^{\frac{1-b_{2}}{2}}(\log \log L)^{
\frac{b_{1}+ b_{2}}{2} + \frac{(1+\nu -b_{1})\nu}{2\alpha}} (\log L)^{
\frac{\nu -1 + b_{2}(\nu +1)}{2}} \cdot (\ell L)^{-\frac{\nu}{2}}.
%
\end{equation}
Recalling the definition of $R$ in \eqref{eq:R-form}, the claim now follows
since one can safely replace $L$ by $\ell L$ inside the $\log $ and
$\log \log $ factors by definition of $\ell $ in
\eqref{eq:pf-Th1-params}, and the exponents of $\log \log R$ in
\eqref{eq:const-UB-1arm-crit} arise using that $\nu < \alpha /2$ in the
second case and that $\alpha \geq \nu +2=3$ when $\nu =1$.
\end{proof}

We now prove Proposition~\ref{P:N-bad}.

\begin{proof}[Proof of Proposition~\ref{P:N-bad}]
Recall the event $\mathbf{B}_{k}$ from \eqref{eq:N-bad}, and for arbitrary
$D \subset \{1,\dots , \ell \}$, consider the event
$\mathbf{B}_{D}=\bigcap_{k \in D} \mathbf{B}_{k}$. For each
$D\subset \{1,\dots ,\ell \}$ with $|D| \geq (1-\rho )\ell $, and
$\delta $, $\ell $, $L $, $\rho $ as appearing in the statement, we will argue
that, if
$ L^{-c}\leq \delta \leq ( \Cr{C:N-bad-1}(\log \ell )^{-
\frac{\nu}{\alpha}})$,
%
\begin{equation}
\label{eq:pf-main1-1} {\mathbb{Q}} (\mathbf{B}_{D} ) \leq C (\ell
L)^{\alpha }\exp \biggl\{ - \frac{c(1-\rho )\delta \ell}{ (\log L)^{b_{2}}} \biggr\}. 
\end{equation}
Once this is shown, applying a union bound over subsets of
$\{1,\dots ,\ell \}$ of cardinality $\lceil (1-\rho )\ell \rceil $ and
using the bound
$\tbinom{\ell}{\lceil (1-\rho )\ell \rceil} \leq e^{C\rho \log (1/
\rho )\ell}$ valid for all $\rho \in (0,\frac{1}{2})$, which is a consequence
of Stirling's formula, the bound \eqref{eq:N-bad-LD} directly follows when
$\rho \log (\rho ^{-1})\leq \Cr{C:N-bad-6}\delta (\log L)^{-b_{2}}$ for
$\Cr{C:N-bad-6}$ small enough.

We now prove \eqref{eq:pf-main1-1}. With ${\mathcal{L}}_{k}^{\mathsf{b}}$ as
in \eqref{eq:LS-D}, let
%
\begin{equation}
\label{eq:LS-D-really} {\mathcal{L}}_{D}^{\mathsf{b}} =\sum
_{k \in D} {\mathcal{L}}_{k}^{\mathsf{b}}.
\end{equation}
In words, ${\mathcal{L}}_{D}^{\mathsf{b}}$ collects all the big loops contained
in a box $\widetilde{B}(x,L)$ for some
$x \in \bigcup_{k\in D}\mathcal{A}_{k}$. Upon intersection with $G$, their
union forms the set $\mathcal{O}$; that is, writing
${\mathcal{L}}_{D}^{\mathsf{b}} = \sum_{i} \delta _{\gamma _{i}}$, we define
%
\begin{equation}
\label{eq:pf-main1-2} \mathcal{O}\stackrel{\text{def.}} {=} G \cap \widetilde{
\mathcal{I}}_{D}^{
\mathsf{b}} \quad \text{where }
\widetilde{\mathcal{I}}_{D}^{\mathsf{b}}= \bigcup
_{i} \operatorname{range}(\gamma _{i}).
\end{equation}
Recall the notion of a good obstacle set from above Lemma~\ref{lem:hittingobstacle}. We first isolate the following result.
%
\begin{lem}
\label{L:fpp-ann}
For $\ell ,L \geq 2$,
$ L^{-c}\leq \delta \leq \Cr{C:N-bad-1}(\log \ell )^{-
\frac{\nu}{\alpha}} $, $D\subset \{1,\dots ,\ell \}$ with
$|D| \geq \frac{\ell}{2}$, $R$ as in \eqref{eq:R-form} and
$\kappa =\delta L^{\nu} (\log L)^{-b_{2}}$, letting
$\mathbf{G}=\{
\text{$\mathcal{O}$ is a $ (L,R, \frac{|D|}{2}, \kappa  )$-good obstacle set}
\}$, one has
%
\begin{equation}
\label{eq:fpp-ann-1} \mathbb{Q}(\mathbf{G}) \geq 1- \exp \bigl\{- c \delta
^{-
\frac{\alpha}{\nu}} \llvert D \rrvert \bigr\}. 
\end{equation}
\end{lem}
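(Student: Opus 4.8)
\textbf{Proof strategy for Lemma~\ref{L:fpp-ann}.}
The plan is to show that, outside an event of probability at most $\exp\{-c\delta^{-\alpha/\nu}|D|\}$, every path $\pi$ in $\Lambda(L)$ from $0$ to $B_R^{\mathsf c}$ meets at least $|D|/2$ vertices $y\in\Lambda(L)$ with $\mathrm{cap}(\mathcal{O}\cap B(y,L))\geq\kappa$. First I would observe the geometric fact that, by \eqref{eq:A_i-prop}, the annuli $\mathbb{A}_k$, $k\in D$, are pairwise at distance $\geq|k-k'|L$ and each is contained in $B_R$, so any path $\pi$ in $\Lambda(L)$ from $0$ to $B_R^{\mathsf c}$ must, for each $k\in D$, pass through at least one vertex $x_k\in\mathcal{A}_k$ (this is where $R$ of the form \eqref{eq:R-form} and the defining property \eqref{eq:A_i} of $\mathcal{A}_k$ are used — the path must cross every sphere $\partial B_{\Cr{CKL}kL}$). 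Hence it suffices to guarantee that for at least $|D|/2$ indices $k\in D$, \emph{every} $x\in\mathcal{A}_k$ satisfies $\mathrm{cap}(\mathcal{O}\cap B(x,L))\geq\kappa$; equivalently, that the number of `defective' indices $k\in D$ — those for which some $x\in\mathcal{A}_k$ has $\mathrm{cap}(\mathcal{O}\cap B(x,L))<\kappa$ — is at most $|D|/2$.

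Next I would control, for a fixed index $k$ and a fixed $x\in\mathcal{A}_k$, the probability that $\mathrm{cap}(\mathcal{O}\cap B(x,L))<\kappa=\delta L^\nu(\log L)^{-\btwo}$. Since $\mathcal{O}\cap B(x,L)$ contains the trace on $G$ of \emph{any} big loop whose range lies in $\widetilde{B}(x,L)$, and a single big loop already has capacity $\geq\delta L^\nu(\log L)^{-\btwo}=\kappa$ by the definition of `big' in \eqref{eq:small-loop}, the event $\{\mathrm{cap}(\mathcal{O}\cap B(x,L))<\kappa\}$ forces that $\mathcal{L}_k^b$ contains \emph{no} loop supported in $\widetilde{B}(x,L)$. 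By the restriction property \eqref{eq:LS-rest} of the loop soup and its Poissonian nature, the number of such loops is Poisson with parameter $\mu\big(\mathrm{range}(\gamma)\subset\widetilde{B}(x,L),\ \text{$\gamma$ big}\big)/2$; a lower bound of order $\delta^{-\alpha/\nu}$ on this parameter — uniformly in $x$ and $L$ — is exactly the content one extracts from \eqref{eq:mu-LB} in Lemma~\ref{L:loop-intensities} (applied with $K$ a small ball inside $B(x,L)$, say $K$ a ball of radius comparable to $L$, so $\mathrm{cap}(K)\asymp L^\nu$ and the scale ratio $\zeta$ bounded, giving $\mu(\ldots,\ \mathrm{cap}(\gamma)\geq cL^\nu(\log L)^{-\btwo})\geq c>0$; to get the $\delta^{-\alpha/\nu}$ rate one instead compares the threshold $\delta L^\nu(\log L)^{-\btwo}$ in \eqref{eq:small-loop} with the $R^\nu(\log R)^{-\btwo}$ threshold available at scale $R$ and rescales, which is why the constraint $\delta\leq\Cr{C:N-bad-1}\log(\ell)^{-\nu/\alpha}$ and $\delta\geq L^{-c}$ appear). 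Thus $\Q(\mathrm{cap}(\mathcal{O}\cap B(x,L))<\kappa)\leq\exp\{-c\delta^{-\alpha/\nu}\}$, and since $|\mathcal{A}_k|\leq CN^\alpha$-type bounds from \eqref{eq:defLambda} give $|\mathcal{A}_k|\leq C(R/L)^{\alpha-1}$ which is polynomial, a union bound over $x\in\mathcal{A}_k$ keeps the probability that index $k$ is defective bounded by, say, $\exp\{-\frac12 c\delta^{-\alpha/\nu}\}$ once $\delta^{-\alpha/\nu}$ dominates $\log(R/L)$, which is ensured by $\delta\leq\Cr{C:N-bad-1}\log(\ell)^{-\nu/\alpha}$ and $R\asymp \ell L$.

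Finally I would upgrade the per-index estimate to the bound \eqref{eq:fpp-ann-1} by exploiting independence across well-separated annuli. The loops counted by $\mathcal{L}_k^b$ are supported in $\bigcup_{x\in\mathcal{A}_k}\widetilde{B}(x,L)$, and for $k\neq k'$ these regions are disjoint by \eqref{eq:A_i-prop} (the separation $\geq|k-k'|L\geq L$ exceeds the loop-support radius), so by the independence property of the Poisson loop soup over disjoint regions the events $\{k\text{ is defective}\}$, $k\in D$, are mutually independent. The number of defective indices in $D$ is therefore stochastically dominated by a $\mathrm{Binomial}(|D|,p)$ with $p=\exp\{-\frac12 c\delta^{-\alpha/\nu}\}$, and for $\delta$ small (which is forced by $\delta\leq\Cr{C:N-bad-1}\log(\ell)^{-\nu/\alpha}$ for $\ell$ large, and otherwise handled by choosing constants) one has $p<\frac14$, so a standard binomial tail / Chernoff bound gives $\Q(\text{at least }|D|/2\text{ defective})\leq\exp\{-c|D|\log(1/p)\}\leq\exp\{-c\delta^{-\alpha/\nu}|D|\}$. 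On the complementary event at most $|D|/2$ indices of $D$ are defective, so by the first paragraph $\mathcal{O}$ is a $(L,R,|D|/2,\kappa)$-good obstacle, which is \eqref{eq:fpp-ann-1}.

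\textbf{Main obstacle.} The delicate point is the uniform lower bound of order $\delta^{-\alpha/\nu}$ on the $\mu$-mass of big loops inside a single box $\widetilde{B}(x,L)$: one needs the capacity threshold $\delta L^\nu(\log L)^{-\btwo}$ to be low enough relative to the `typical' capacity of a loop confined to that box that many such loops are present, and quantifying this requires the scaling input from Lemma~\ref{L:loop-intensities} (ultimately \cite[Lemma~5.3]{DrePreRod5}) together with the precise interplay between $\delta$, $L$ and $\ell$ encoded in the hypotheses $L^{-c}\leq\delta\leq\Cr{C:N-bad-1}\log(\ell)^{-\nu/\alpha}$. Getting the constants to line up so that the single-box exponent $\delta^{-\alpha/\nu}$ beats the polynomial-in-$R$ cardinality of $\mathcal{A}_k$ (whence the $(\ell L)^\alpha$ prefactor in \eqref{eq:pf-main1-1} rather than in \eqref{eq:fpp-ann-1}) is the crux.
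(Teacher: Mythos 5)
Your proposal is correct and rests on the same three pillars as the paper's argument: (i) every path in $\Lambda(L)$ from $0$ to $B_R^{\mathsf c}$ must cross each annulus $\mathcal{A}_k$, $k\in D$; (ii) the single-box estimate $\Q\big(\tilde B(x,L)\text{ contains no big loop}\big)\le e^{-c\delta^{-\alpha/\nu}}$; (iii) independence of the loop soup over the disjoint annular regions $\tilde{\mathbb{A}}_k$, followed by a binomial/Chernoff bound whose entropy cost (of order $e^{C\log(\ell)|D|}$) is absorbed into the main exponential precisely because $\delta\le\Cr{C:N-bad-1}\log(\ell)^{-\nu/\alpha}$.

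The one genuine structural difference is in how the union bound is organized. The paper union-bounds over all tuples $A'=(x_k)_{k\in D}$ of one candidate crossing point per annulus, then for a fixed tuple applies independence of the events $\{\tilde B(x_k,L)\text{ contains a big loop}\}$. You instead define an index $k$ to be defective if \emph{some} $x\in\mathcal{A}_k$ fails, union-bound over $x\in\mathcal{A}_k$ first (picking up a factor $|\mathcal{A}_k|\le C\ell^\alpha$ per index), and then use independence of the defective events across $k$. Both accountings produce the same $e^{C\log(\ell)|D|}$-type entropy against $e^{-c\delta^{-\alpha/\nu}|D|/2}$ and hence need the same constraint on $\delta$; your variant has the small advantage of not having to enumerate paths explicitly, at the cost of demanding the (stronger but still affordable) condition that \emph{all} of $\mathcal{A}_k$ be good for at least $|D|/2$ indices. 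Either way the result is the same.

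One place where the sketch is vague and should be sharpened: your parenthetical route to the single-box estimate, applying \eqref{eq:mu-LB} with $K$ a ball of radius comparable to $L$, only produces a constant lower bound on the $\mu$-mass and not the crucial $\delta^{-\alpha/\nu}$. The mechanism the paper actually uses, and which you only gesture at with ``rescales'', is to tile $B(x,L)$ by $\asymp\delta^{-\alpha/\nu}$ disjoint sub-boxes of radius $\asymp\delta^{1/\nu}L$ (this is where \eqref{eq:intro_sizeball} enters), apply \eqref{eq:mu-LB} at that smaller scale so that each sub-box independently contains a big loop confined to it with uniformly positive probability, and multiply the failure probabilities; the hypothesis $\delta\ge L^{-c}$ is needed there to ensure $\log(\delta^{1/\nu}L)\gtrsim\log L$ so that the $\log$-corrected capacity threshold at scale $\delta^{1/\nu}L$ still dominates $\kappa$. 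With that step spelled out, your proof goes through.
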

We delay the proof of Lemma~\ref{L:fpp-ann} for a few lines. Since the
point measures ${\mathcal{L}}_{k}^{\mathsf{b}}$ have disjoint supports as
$k$ varies, owing to \eqref{eq:LS-D}, \eqref{eq:AA_i}, and
\eqref{eq:A_i-prop}, the event $\mathbf{B}_{D}$ can be recast in view of
\eqref{eq:B-i} and \eqref{eq:LS-D-really} as
%
\begin{equation}
\label{eq:B_D-rewrite}
\mathbf{B}_{D}=  \bigl\{\mathcal{C} \cap B_{R}^{\mathsf{c}} \neq
\emptyset ,   \mathcal{C} \cap \widetilde{\mathcal{I}}_{D}^{\mathsf{b}} =
\emptyset   \bigr\} \subset  \bigl\{ 0\leftrightarrow B_{R}^{\mathsf{c}}
\text{ in } \mathcal{L}_{\widetilde{\mathcal{G}} \setminus
\widetilde{\mathcal{I}}_{D}^{\mathsf{b}}}  \bigr\}
\end{equation}
(recall the notation from below \eqref{eq:2-point}, as well as from above \eqref{eq:LS-rest} that $\mathcal{L}_{U}$ denotes the
restriction of $\mathcal{L}$ to loops contained in $U$). One issue with
the event $\mathbf{B}_{D}$ rendered visible by \eqref{eq:B_D-rewrite} is
that $\mathcal{L}$ is not independent of
$\widetilde{\mathcal{I}}_{D}^{\mathsf{b}}$---in particular,
\eqref{eq:LS-rest} is not directly applicable in this context with
$U = \widetilde{\mathcal{G}} \setminus \widetilde{\mathcal{I}}_{D}^{
\mathsf{b}}$. To deal with this issue, we proceed as follows. Let
$\widecheck{\mathcal{L}}_{D}^{\mathsf{b}} \stackrel{\text{law}}{=}
\mathcal{L}_{D}^{\mathsf{b}}$ denote a copy independent of $\mathcal{L}$ defined
under $\mathbb{Q}$ by suitably enlarging the underlying probability space.
Let $\mathcal{L}' = \mathcal{L}- \mathcal{L}_{D}^{\mathsf{b}}$, and define
\begin{equation*}
\widecheck{\mathcal{L}}= \widecheck{\mathcal{L}}_{D}^{\mathsf{b}}
+ \mathcal{L}'.
\end{equation*}
Since $\mathcal{L}'$ is independent of $\mathcal{L}_{D}^{\mathsf{b}}$ and
using the definition of $\widecheck{\mathcal{L}}_{D}^{\mathsf{b}}$, one infers
that $\widecheck{\mathcal{L}} \stackrel{\text{law}}{=} \mathcal{L}$ and
that $\widecheck{\mathcal{L}}$ and $\mathcal{L}_{D}^{\mathsf{b}}$ are independent
under $\mathbb{Q}$. In particular, since
$\widetilde{\mathcal{I}}_{D}^{\mathsf{b}}$ is plainly
$\mathcal{L}_{D}^{\mathsf{b}}$-measurable in view of \eqref{eq:pf-main1-2},
one obtains that
%
\begin{equation}
\label{eq:pf-main1-3} \text{$\widecheck{\mathcal{L}}$ and
$\widetilde{\mathcal{I}}_{D}^{\mathsf{b}}$ are independent under
$\mathbb{Q}$.} 
\end{equation}
Moreover,
$\mathcal{L}_{\widetilde{\mathcal{G}} \setminus
\widetilde{\mathcal{I}}_{D}^{\mathsf{b}}} \leq \mathcal{L}'$ by definition of
$\widetilde{\mathcal{I}}_{D}^{\mathsf{b}}$ and, therefore,
$\mathcal{L}_{\widetilde{\mathcal{G}} \setminus
\widetilde{\mathcal{I}}_{D}^{\mathsf{b}}} \le \widecheck{\mathcal{L}}_{
\widetilde{\mathcal{G}} \setminus \widetilde{\mathcal{I}}_{D}^{\mathsf{b}}}$
by monotonicity. Thus, returning to \eqref{eq:B_D-rewrite}, observing that
the event $\mathbf{G}$, defined above \eqref{eq:fpp-ann-1}, is
$\widetilde{\mathcal{I}}_{D}^{\mathsf{b}}$-measurable and that, conditionally
on $\widetilde{\mathcal{I}}_{D}^{\mathsf{b}}$, the loop soup
$\widecheck{\mathcal{L}}_{\widetilde{\mathcal{G}} \setminus
\widetilde{\mathcal{I}}_{D}^{\mathsf{b}}}$ has law
$\mathbb{Q}_{\widetilde{\mathcal{G}} \setminus
\widetilde{\mathcal{I}}_{D}^{\mathsf{b}}}$ owing to \eqref{eq:pf-main1-3} and
the restriction property \eqref{eq:LS-rest}, it follows that
%
\begin{equation}
\label{eq:pf-main1-4}
\mathbb{Q}(\mathbf{B}_{D},   \mathbf{G})\leq \mathbb{Q} \bigl( 0
\leftrightarrow B_{R}^{\mathsf{c}} \text{ in } \widecheck{\mathcal{L}}_{
\widetilde{\mathcal{G}} \setminus \widetilde{\mathcal{I}}_{D}^{\mathsf{b}}}
,   \mathbf{G} \bigr) =2 \mathbb{E}^{\mathbb{Q}} \bigl[ \mathbb{P}_{
\widetilde{\mathcal{G}} \setminus \widetilde{\mathcal{I}}_{D}^{\mathsf{b}}}
\bigl(0\leftrightarrow B_{R}^{\mathsf{c}}\bigr) \cdot 1_{\mathbf{G}}  \bigr],
\end{equation}
where the last step also uses the formula \eqref{eq:pf-Th1-1} applied to
$\widetilde{\mathcal{G}} \setminus \widetilde{\mathcal{I}}_{D}^{\mathsf{b}}$
instead of $\widetilde{\mathcal{G}}$. Applying a union bound over points
on the boundary of $G\setminus B_{R}$ connected to $0$ on the right-hand
side of \eqref{eq:pf-main1-4} and, subsequently, applying Corollary~\ref{cor:2pointkilled},
which is in force on the event $\mathbf{G}$, with the choices
$n=(1-\rho ) \frac{\ell}{2}$ and $\kappa $ as in Lemma~\ref{L:fpp-ann},
one deduces that
\begin{equation*}
\mathbb{Q}(\mathbf{B}_{D}, \mathbf{G})\leq C (\ell
L)^{\alpha } \exp \biggl\{ - \frac{ c(1-\rho )\delta \ell }{(\log L)^{b_{2}}} \biggr\}.
\end{equation*}
Together with the bound on $\mathbb{Q}( \mathbf{G}^{\mathsf{c}})$ provided
by Lemma~\ref{L:fpp-ann}, the upper bound \eqref{eq:pf-main1-1} immediately
follows, which completes the proof.
\end{proof}

It remains to provide the proof of Lemma~\ref{L:fpp-ann}.

\begin{proof}[Proof of Lemma~\ref{L:fpp-ann}]
The event $\mathbf{G}$ in question refers to the existence of a set
$A\subset \operatorname{range}(\pi \cap B_{R})$ with $|A|\geq \frac{|D|}{2}$ for
every path $\pi $ in $\Lambda (L)$ from $0$ to $B_{R}^{\mathsf{c}}$ with
the property that
%
\begin{equation}
\label{eq:pf-ann-1} \operatorname{cap} \bigl(\widetilde{\mathcal{I}}_{D}^{\mathsf{b}}
\cap B(y,L) \bigr) \geq \delta L^{\nu} (\log L)^{-b_{2}},
\quad y \in A. 
\end{equation}
Let $\pi $ be any such path. By definition of $\mathcal{A}_{k}$ in
\eqref{eq:A_i}, the range of $\pi $ intersects $\mathcal{A}_{k}$ for any
$k \in D$. We pick one such point of intersection for every $k\in D$, which
defines the set $A'$. In view of \eqref{eq:defLambda} and
\eqref{eq:R-form}, applying a union bound over the possible choices for
$A'$ yields a factor bounded by
%
\begin{equation}
\label{eq:pf-ann-entro} \bigl\llvert B_{R} \cap \Lambda (L) \bigr\rrvert
^{ \llvert D \rrvert } \leq e^{C \log (\ell ) \llvert D \rrvert }. 
\end{equation}
As we now explain, it is enough to consider a fixed set
$A'=\{x_{k} : k \in D\}$ with $x_{k} \in \mathcal{A}_{k}$ for all
$k\in D$ and to show for
%
\begin{equation}
\label{eq:pf-ann-2} A\stackrel{\text{def.}} {=} \bigl\{x_{k} :
\widetilde{B}(x_{k},L) \text{ contains a big loop in $\mathcal{L}$} \bigr\} 
\end{equation}
that, for $\delta \geq L^{-c}$,
%
\begin{equation}
\label{eq:pf-ann-3} \mathbb{Q} \biggl( \llvert A \rrvert < \frac{ \llvert D \rrvert }{2}
\biggr) \leq e^{-c\delta ^{-
\frac{\alpha}{\nu}} \llvert D \rrvert }. 
\end{equation}
Indeed, the set $A$, defined by \eqref{eq:pf-ann-2}, satisfies
\eqref{eq:pf-ann-1} by definition of
$\widetilde{\mathcal{I}}_{D}^{\mathsf{b}}$ and of big loops (see
\eqref{eq:small-loop}), and $A$ has the required cardinality on the complement
of the event in \eqref{eq:pf-ann-3}. Furthermore, for
$\delta \leq \Cr{C:N-bad-1}(\log \ell )^{-\frac{\nu}{\alpha}}$, the entropy
term \eqref{eq:pf-ann-entro} gets absorbed in the exponential from
\eqref{eq:pf-ann-3}.

To show \eqref{eq:pf-ann-3}, one observes that the events
$E_{k}=\{ \widetilde{B}(x_{k},L)
\text{ contains a big loop in $\mathcal{L}$}\}$ are independent as $k$ varies
since the sets $ \widetilde{B}(x_{k},L)$ are disjoint by construction;
see \eqref{eq:A_i-prop}. Hence, $|A|$ dominates a binomial random variable
with parameters $|D|$ and $p= \inf_{k} \mathbb{Q}(E_{k})$. We will now
argue that, for all $\delta \in (0,1)$ and $\delta \geq L^{-c}$,
%
\begin{equation}
\label{eq:pf-ann-4} p \geq 1 -e^{-c\delta ^{- \frac{\alpha}{\nu}}}. 
\end{equation}
Once this is shown, a union bound gives that
$\mathbb{Q}(|A|<n )\leq 2^{n} (1-p)^{n}$ for $n=|D|/2$ and
\eqref{eq:pf-ann-3} follows for $\delta \in{(0,c)}$ for some small enough
constant $c>0$.

To argue that \eqref{eq:pf-ann-4} holds, we use Lemma~\ref{L:loop-intensities}.
Let $x \in G$. By \eqref{eq:mu-LB} applied with $\zeta =C$ to
$K=B(x, \Cl{c:loop-int2} \delta ^{1/\nu} L)$ for suitable choice of
$\Cr{c:loop-int2}$, we can ensure that
%
\begin{equation}
\label{eq:pf-ann-5} \mathbb{Q}\begin{pmatrix}
\text{$\exists \gamma \in \text{supp}(\mathcal{L})$ s.t.~$\operatorname{range}(
\gamma ) \cap B\bigl(x, \Cr{c:loop-int2} \delta ^{1/\nu} L\bigr) \neq
\emptyset $}
\\
\text{and
$\text{cap}(\gamma ) \geq \delta L^{\nu }\bigl(\log \bigl(\delta ^{1/\nu} L\bigr)\bigr)^{-b_{2}}$
} \end{pmatrix} \geq \Cl[c]{c:loop-int3} 
\end{equation}
whenever $\delta ^{1/\nu} L \geq 1$. Requiring that
$\delta \geq L^{-c}$, one further ensures that
$\log (\delta ^{1/\nu} L) \geq c \log L$ (along with
$\delta ^{1/\nu} L \geq 1$), which effectively allows to remove the factor
$\delta ^{1/\nu}$ from the logarithm in \eqref{eq:pf-ann-5}. Now, applying
\eqref{eq:mu-UB} with $\zeta $ a large enough constant, we can further
ensure that with probability at most $\frac{\Cr{c:loop-int3}}{2}$, a loop
in $\text{supp}(\mathcal{L})$ will intersect both $K$ and the complement
of $B(x, \zeta \Cr{c:loop-int2} \delta ^{1/\nu} L)$. Combining this with
\eqref{eq:pf-ann-5} yields that
%
\begin{equation}
\label{eq:pf-ann-6} \mathbb{Q}\begin{pmatrix}
\text{$\exists \gamma \in \text{supp}(\mathcal{L})$ s.t.~$\operatorname{range}(
\gamma ) \cap B\bigl(x, \Cr{c:loop-int2} \delta ^{1/\nu} L\bigr) \neq
\emptyset $,}
\\
\text{$\operatorname{range}(\gamma ) \subset B\bigl(x, \zeta \Cr{c:loop-int2}
\delta ^{1/\nu} L\bigr)$ and
$\text{cap}(\gamma ) \geq \delta L^{\nu }(\log L)^{-b_{2}}$ } \end{pmatrix} \geq \frac{\Cr{c:loop-int3}}{2}. 
\end{equation}
One now considers, for a given $x_{k}$ as above, the set
$\widetilde{\Lambda }_{k} = B(x_{k}, L) \cap \Lambda (2\zeta
\Cr{c:loop-int2} \delta ^{1/\nu} L)$ so that the boxes
$B(x, \zeta \Cr{c:loop-int2} \delta ^{1/\nu} L)$ are disjoint as $x$ varies
in $\widetilde{\Lambda }_{k}$ by \eqref{eq:defLambda}, and forms the subset
$\Lambda _{k} \subset \widetilde{\Lambda }_{k}$ by keeping only those points
$x$ such that
$B(x, \zeta \Cr{c:loop-int2} \delta ^{1/\nu} L) \subset B(x_{k},L)$. By
\eqref{eq:defLambda} and \eqref{eq:intro_sizeball},
$|\Lambda _{k}| \geq c \delta ^{-\frac{\alpha}{\nu}}$, and the events in
\eqref{eq:pf-ann-6} are independent, as $x$ varies in $\Lambda _{k}$ by
construction. The claim \eqref{eq:pf-ann-4} now follows, since the occurrence
of at least one of these events already implies $E_{k}$.
\end{proof}

\begin{Rk}
\label{rk:simplerproof}
If one is only interested in proving \eqref{eq:def_rho}, that is, obtaining
\eqref{eq:1-arm-UB-crit} for some subpolynomial function $q$, the above
proof of Theorem~\ref{T:1arm-crit} can be simplified as follows. One replaces
the events $\mathbf{B}_{k}$ from \eqref{eq:B-i} by a single event,
\begin{equation*}
\mathbf{B}= \bigl\{\mathcal{C}\cap B_{R}^{\mathsf{c}}\neq
\emptyset , \mathcal{C}\cap B_{R}\text{ does not contain any big loop} \bigr\},
\end{equation*}
where we recall the definition of big loops from below
\eqref{eq:small-loop}, and notice that by a similar reasoning as in
\eqref{eq:pf-Th1-3}
\begin{equation*}
\mathbb{P}\bigl(0\leftrightarrow B_{R}^{\mathsf{c}}\bigr)\leq \mathbb{P}(
\mathbf{B})+\mathbb{P}\bigl(\mathcal{C}\geq \delta L^{\nu}(\log L)^{-b_{2}}\bigr)
\leq \mathbb{P}(\mathbf{B})+C\delta ^{-1/2}L^{-\nu /2}(\log L)^{b_{2}/2}.
\end{equation*}
One can then define $\mathcal{O}$ as the set of vertices in $B_{R}$ hit
by a big loop, show similarly as in Lemma~\ref{L:fpp-ann} that
$\mathcal{O}$ is a $(L,R,\ell /2,\kappa )$-good obstacle with high probability,
and hence proceeding similarly as in \eqref{eq:pf-main1-4} and below one
deduces that
$\mathbb{P}(\mathbf{B})\leq (L\ell )^{\alpha}\exp \{-c\delta (\log L)^{-b_{2}}
\ell \}$. Taking $\delta =c$ and $\ell =(\log L)^{1+b_{2}}$, it follows
that
\begin{equation*}
\mathbb{P}\bigl(0\leftrightarrow B_{R}^{\mathsf{c}}\bigr)\leq CL^{-\nu /2}(
\log L)^{b_{2}/2}\leq CR^{-\nu /2}(\log R)^{(\nu +b_{2}(\nu +1))/2}
\end{equation*}
by \eqref{eq:R-form}. When $\nu >1$, this simply corresponds to a higher
$\log $ power in \eqref{eq:const-UB-1arm-crit}, but when $\nu =1$, for
instance on $\mathbb{Z}^{3}$, one obtains $q(R)=(\log R)^{1/2}$ instead
of $\log \log R$ in \eqref{eq:const-UB-1arm-crit}. Incidentally, this factor
$(\log R)^{1/2}$ on $\mathbb{Z}^{3}$ already appeared multiple times before
(see \cite{DiWi,DrePreRod5}), and it can also be deduced from
\cite{DrePreRod3}, (1.9), hence the interest of improving it to
$\log \log R$. Intuitively, this improvement comes from asking to meet
many big loops in Proposition~\ref{P:N-bad}, whose union have a much bigger
capacity than a single loop by Lemma~\ref{L:cap-LB}.
\end{Rk}

\section{Bounds on the two-point function}
\label{sec:2point}

In this section we prove Theorem~\ref{thm:2pointUB} and Corollary~\ref{cor:2pointZ3}
and, at the very end, provide a short derivation of
\eqref{eq:lowerboundtau}. One important tool will be the random interlacements
model, originally introduced on $\mathbb{Z}^{d}$ in \cite{MR2680403} (cf. also
\cite{DrRaSa-13} for an introduction to the model), extended to general
transient graphs in \cite{MR2525105}, and to the cable system in
\cite{MR3502602}. We refer, for instance, to
\cite{DrePreRod3}, Section~2.5, for a brief introduction of the model at
the level of generality needed in the present context.

Under the probability $\overline{\mathbb{P}}$, we denote by
$\widetilde{{\mathcal{I}}}^{u}$ the closure of the interlacement set at level
$u>0$ on $\widetilde{\mathcal{G}}$ and by
$\widetilde{{\mathcal{V}}}^{u}\stackrel{\text{def.}}{=}
\widetilde{\mathcal{G}}\setminus \widetilde{{\mathcal{I}}}^{u}$ the corresponding
vacant set on $\widetilde{\mathcal{G}}$. We further abbreviate by
${\mathcal{I}}^{u}\stackrel{\text{def.}}{=}\widetilde{{\mathcal{I}}}^{u}
\cap G$ the (usual) interlacement set on $G$. With the notation introduced
above Corollary~\ref{cor:2pointkilled}, conditionally on
$\widetilde{{\mathcal{I}}}^{u}$, the measure
$\mathbb{P}_{\widetilde{{\mathcal{V}}}^{u}}$ is the law of a Gaussian free field
killed on $\widetilde{{\mathcal{I}}}^{u}$. The interest of random interlacements
in the study of the Gaussian free field is due to an isomorphism theorem
between the two objects, first observed in \cite{MR2892408} and then improved
in \cite{MR3502602,MR3492939,DrePreRod3}. Our first result is a useful
consequence of this isomorphism, combined with the restriction property
\eqref{eq:LS-rest} and the loop soup isomorphism, in the (weak) form
\eqref{eq:iso-LS}. For $a \in \mathbb{R}$, let
$ \widetilde{\mathcal{K}}^{a}= \widetilde{\mathcal{K}}^{a}(\varphi )
\subset \widetilde{\mathcal{G}}$ denote the connected component of
$0$ (see \eqref{eq:0}) in
$\{ x \in \widetilde{\mathcal{G}} : \varphi _{x} \geq a\}$.

\begin{lem}
\label{lem:couplingwithinter}
For each $u>0$, there exists a coupling of
$(\varphi ,\widetilde{{\mathcal{V}}}^{u},\gamma )$, with $\varphi $ having law
$\mathbb{P}$, $\widetilde{{\mathcal{V}}}^{u}$ a vacant set with the same law
as under $\overline{\mathbb{P}}$, and $\gamma $ with law
$\mathbb{P}_{{\widetilde{{\mathcal{V}}}^{u}}}$, conditionally on
$\widetilde{{\mathcal{V}}}^{u}$, such that
\begin{equation*}
\widetilde{\mathcal{K}}^{\sqrt{2u}}\subset \bigl\{x\in{ \widetilde{{
\mathcal{V}}}^{u}}: \gamma _{x}\geq 0 \bigr\}.
\end{equation*}
\end{lem}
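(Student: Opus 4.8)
The plan is to combine the loop-soup isomorphism \eqref{eq:iso-LS} with Sznitman's isomorphism theorem relating the Gaussian free field and random interlacements on the cable system, in the form given in \cite{MR3502602,DrePreRod3}. Recall that this isomorphism states that, under a suitable coupling of the free field $\phi$ (law $\P$) and the interlacement set $\tilde{\I}^u$ (law $\overline\P$), one has $\frac12(\phi + \sqrt{2u})^2 = \tfrac12\psi^2 + \ell^u$ in distribution (or more precisely in a sign-coupled form), and crucially that the sign clusters interact nicely: on the cable system, the connected component of $0$ in $\{x:\phi_x + \sqrt{2u} > 0\}$, intersected with the event $\{\phi_0 + \sqrt{2u}>0\}$, can be realized inside the union of $\tilde{\I}^u$ with the positive sign cluster of an independent (conditionally on $\tilde{\I}^u$) free field on $\tilde{\V}^u$. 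The cleanest route is to invoke \cite[Theorem~2.4 or Proposition~2.6]{DrePreRod3} (the cable-system version of Lupu's isomorphism with interlacements), which already produces a coupling of $\phi$, $\tilde\I^u$ and a field $\gamma$ on $\tilde\V^u$ with $\gamma$ distributed as $\P_{\tilde\V^u}$ conditionally on $\tilde\V^u$, such that the sign cluster of $0$ for the shifted field $\phi + \sqrt{2u}$ lies in $\tilde\I^u \cup \{x \in \tilde\V^u : \gamma_x \geq 0\}$.

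The key steps, in order, are: (i) record that $\tilde\K^{\sqrt{2u}}$ is, by definition, the connected component of $0$ in $\{x\in\tilde\G : \phi_x \geq \sqrt{2u}\}$, which equals the component of $0$ in $\{x : \phi_x - \sqrt{2u} \geq 0\}$; after replacing $\phi$ by the (equal in law) shifted field, this is the nonnegative sign cluster of a centered GFF; (ii) apply the cable-system isomorphism theorem to obtain the coupling of $(\phi, \tilde\V^u, \gamma)$ with the stated marginals; (iii) observe that on $\tilde\I^u$ the relevant component is absorbed into the interlacement (points of $\tilde\I^u$ have $\phi_x$ there playing no role because $\tilde\I^u$ is connected to itself along cables), so that outside $\tilde\I^u$ the component is governed by the field $\gamma$ on $\tilde\V^u$; (iv) conclude $\tilde\K^{\sqrt{2u}}\subset \{x\in\tilde\V^u:\gamma_x\geq 0\}$ — here one must be slightly careful that the statement as written does \emph{not} include $\tilde\I^u$ on the right-hand side, so one needs the stronger feature that on $\tilde\V^u$ the shifted field is exactly $\gamma$ (up to sign) and that the component of $0$ avoids $\tilde\I^u$ entirely, which holds because $0\in\tilde\V^u$ almost surely (as $\tilde\I^u$ has zero Lebesgue measure / does not disconnect in the relevant way) combined with the fact that $\gamma$ vanishes on $\partial \tilde\V^u$ so the nonnegative cluster of $\gamma$ cannot cross into $\tilde\I^u$.

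The main obstacle I anticipate is precisely handling the behavior at the interface $\partial\tilde\V^u = \partial\tilde\I^u$ and justifying why $\tilde\I^u$ need not appear on the right-hand side: the shifted free field $\phi + \sqrt{2u}$ restricted to $\tilde\V^u$ is, conditionally on $\tilde\V^u$, distributed as $\gamma$ which is a GFF on $\tilde\V^u$ with zero boundary condition on $\partial\tilde\V^u$, hence $\gamma_x \to 0$ as $x \to \partial\tilde\V^u$; since the cluster $\tilde\K^{\sqrt{2u}}$ is the component of $0$ in a strictly-nonnegative-or-nonnegative set and $0\in\tilde\V^u$, any continuous path from $0$ staying in $\{\phi\geq\sqrt{2u}\}$ that tried to enter $\tilde\I^u$ would have to pass through a point of $\partial\tilde\V^u$ where the shifted field equals $0 = \gamma = 0$, which is consistent with $\{\gamma\geq 0\}$ but means the cluster does not actually use interior points of $\tilde\I^u$. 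I would spell this out by first proving the inclusion $\tilde\K^{\sqrt{2u}}\subset \tilde\I^u\cup\{x\in\tilde\V^u:\gamma_x\geq 0\}$ directly from the isomorphism, and then upgrading it using the boundary-vanishing of $\gamma$ together with the observation that $\tilde\I^u$ contributes no new connections to the component of $0$ beyond those already present through $\partial\tilde\V^u$. A secondary point to get right is the sign coupling in the isomorphism — one must use the version (as in \cite{MR3502602}) where the signs of $\phi+\sqrt{2u}$ and of $\gamma$ are coupled to agree on each component of $\tilde\V^u$, so that "$\gamma_x\geq 0$" is the correct event rather than "$|\gamma_x|>0$"; this is exactly the refinement of \cite{MR2892408} achieved on the metric graph, and citing it carefully will be essential.
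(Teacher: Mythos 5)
The central flaw is your claim that \(0\in\tilde{\V}^u\) almost surely, which you invoke to ``upgrade'' the inclusion so as to remove \(\tilde{\I}^u\) from the right-hand side. This is false: \(\tilde{\I}^u\) is a union of full interlacement trajectories on the cable system, so it has positive one-dimensional Lebesgue measure, and \(\overline{\P}(0\in\tilde{\I}^u)=1-e^{-u\,\mathrm{cap}(\{0\})}>0\). On the event \(\{0\in\tilde{\I}^u\}\) the set \(\{x\in\tilde{\V}^u:\gamma_x\ge0\}\) does not contain \(0\), so for the stated inclusion to hold the coupling must in particular force \(\tilde{\K}^{\sqrt{2u}}=\emptyset\) (equivalently \(\phi_0<\sqrt{2u}\)) whenever \(0\in\tilde{\I}^u\). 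Your argument does not produce or even address this, whereas the paper gets it for free from the distributional form of the isomorphism: by \cite[Theorem~1.1, Lemma~3.4(2)]{DrePreRod3}, \(\tilde{\K}^{\sqrt{2u}}\) has the same law under \(\P\) as \(\tilde{\K}^{0}\,\mathds{1}\{\tilde{\K}^{0}\subset\tilde{\V}^u\}\) under \(\P\otimes\overline{\P}\), which automatically yields the empty set when \(0\in\tilde{\I}^u\).

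There are two further problems. First, your step (i) misidentifies the target set: the sign cluster of \(\phi+\sqrt{2u}\) is the cluster of \(0\) in \(\{\phi\ge-\sqrt{2u}\}\), not \(\tilde{\K}^{\sqrt{2u}}=\{0\}\)-cluster of \(\{\phi\ge\sqrt{2u}\}\); the relation between these two is exactly what the isomorphism encodes, so you cannot dismiss it as a trivial rewriting. Second, and more structurally, your proposal never uses the loop-soup isomorphism \eqref{eq:iso-LS} nor the restriction property \eqref{eq:LS-rest}, which are essential in the paper's proof. After applying (Isom), the paper identifies \(\tilde{\K}^0\) with the cluster \(\mathcal{C}\) of \(0\) in the loop soup \(\L\), observes that on the event \(\{\mathcal{C}\subset\tilde{\V}^u\}\) the cluster of \(0\) in \(\L\) coincides with the cluster of \(0\) in the restricted loop soup \(\L_{\tilde{\V}^u}\), and then uses that, conditionally on \(\tilde{\V}^u\), \(\L_{\tilde{\V}^u}\) has law \(\Q_{\tilde{\V}^u}\). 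Applying the loop-soup isomorphism a second time, now on \(\tilde{\V}^u\), converts this back into a GFF \(\gamma\) with conditional law \(\P_{\tilde{\V}^u}\), which is precisely how the stochastic domination (and hence the coupling) is obtained. Your direct boundary-value argument about \(\gamma\) vanishing on \(\partial\tilde{\V}^u\) cannot substitute for this: it neither handles \(\{0\in\tilde{\I}^u\}\), nor produces the required conditional law of \(\gamma\), nor yields the inclusion as a coupling statement rather than a heuristic.
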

\begin{proof}
By \cite{DrePreRod3}, Theorem~1.1 and~Lemma~3.4(2), the isomorphism (Isom)
on page 259 therein is satisfied for any graph satisfying
\eqref{eq:intro_Green}, such as $\mathcal{G}$. Using the symmetry of the
Gaussian free field, one readily deduces from this isomorphism that
%
\begin{equation}
\label{eq:iso-applied} \widetilde{\mathcal{K}}^{\sqrt{2u}} \text{ has the same law under $\mathbb{P}$ as }
\widetilde{\mathcal{K}}^{0}1 \bigl\{\widetilde{\mathcal{K}}^{0}
\subset{ \widetilde{{\mathcal{V}}}^{u}} \bigr\}\text{ under }
\mathbb{P}\otimes \overline{\mathbb{P}}, 
\end{equation}
where $A1\{E\}$ is the set which is equal to $A$ if the event $E$ occurs
and is equal to $\emptyset $ otherwise. By \eqref{eq:iso-LS} and the symmetry
of $\mathbb{P}$ under $\varphi \mapsto -\varphi $, the set
$\widetilde{\mathcal{K}}^{0}$ is either empty with probability
$\frac{1}{2}$ or has the same law as the cluster $\mathcal{C}$ of
$0$ in ${\mathcal{L}}$ under $\mathbb{Q}$ otherwise. Moreover, since
$\widetilde{{\mathcal{V}}}^{u}$ and ${\mathcal{L}}$ are independent, it follows
from \eqref{eq:LS-rest} that, conditionally on
$\widetilde{{\mathcal{V}}}^{u}$, the loops
${\mathcal{L}}_{\widetilde{{\mathcal{V}}}^{u}}$ which are entirely contained
in $\widetilde{{\mathcal{V}}}^{u}$ have the same law as a loop soup under
$\mathbb{Q}_{\widetilde{{\mathcal{V}}}^{u}}$. Therefore, conditionally on
$\widetilde{{\mathcal{V}}}^{u}$, the set
$\widetilde{\mathcal{K}}^{\sqrt{2u}}$ is stochastically dominated by a
set which is either empty with probability $\frac{1}{2}$ or is the cluster
of $0$ in a loop soup under $\mathbb{Q}_{\widetilde{{\mathcal{V}}}^{u}}$ otherwise
(to see this, notice that whenever the latter set is nonempty, the inclusion
from \eqref{eq:iso-applied} translates into the requirement that the cluster
of $0$ in ${\mathcal{L}}_{\widetilde{{\mathcal{V}}}^{u}}$ is equal to the cluster
of $0$ in ${\mathcal{L}}$). Using \eqref{eq:iso-LS} but for the graph
$\widetilde{\mathcal{G}}$ with infinite killing on
${\widetilde{{\mathcal{I}}}^{u}}$ and the symmetry of the Gaussian free field
again, the claim follows.
\end{proof}

As we detail below, one obtains from the first passage percolation upper
bound in \cite{prevost2023passage}, Theorem~5.4, that the set
${\mathcal{I}}^{u}$ is a $(L,R,n,\kappa )$-good obstacle set (see above Lemma~\ref{lem:hittingobstacle})
with high probability for an appropriate choice of $L$, $n$, $\kappa $ and combining
this with Corollary~\ref{cor:2pointkilled} and Lemma~\ref{lem:couplingwithinter},
we now deduce Theorem~\ref{thm:2pointUB}.

\begin{proof}[Proof of Theorem~\ref{thm:2pointUB}]
The statement is trivial for $a=0$ and by
\cite{DrePreRod3}, Lemma~4.3, we may assume without loss of generality
that $a>0$. Abbreviate $u=a^{2}/2$, let $\eta \in{(0,1)}$, and recall
$\tau _{a}^{\mathrm{tr}}$ from \eqref{eq:2-point}. For $a>0$, the truncation
$\{x\nleftrightarrow \infty \text{ in }E^{\geq a}\}$ has probability
one; hence, applying Lemma~\ref{lem:couplingwithinter}, we have that
%
\begin{equation}
\label{eq:bound2pointviacoupling}
\tau _{a}^{\mathrm{tr}}(0,x)= \tau _{\sqrt{2u}}^{\mathrm{tr}}(0,x)\leq
\overline{\mathbb{E}}  \bigl[\mathbb{P}_{\widetilde{{\mathcal{V}}}^{u}}
\bigl(0\leftrightarrow x\text{ in }\bigl\{y\in{\widetilde{{\mathcal{V}}}^{u}}:
\varphi _{y}\geq 0\bigr\} \bigr)  \bigr].
\end{equation}
To bound the right-hand side of \eqref{eq:bound2pointviacoupling}, we use
Corollary~\ref{cor:2pointkilled}. Recall the notion of a good obstacle
set introduced above Lemma~\ref{lem:hittingobstacle}. By
\cite{prevost2023passage}, (6.4) (see also below
\cite{prevost2023passage}, (2.22), regarding the function $F_{\nu}$ appearing
therein), which holds without the condition
\cite{prevost2023passage}, (2.5), for similar reasons as explained in
\cite{prevost2023passage}, Remark~6.6, there exist
$c,\Cl[c]{ctwopoint2}>0$ and $C<\infty $, depending on $\eta $, such that
letting $L=Cu^{-\frac{1}{\nu}}$, $n=cRu^{\frac{1}{\nu}}$, and
$\kappa =u^{-1}(\log (\frac {1}{u})\vee 1)^{-b_{2}}$, if
$u^{\frac{1}{\nu}}R\geq C$, then
%
\begin{equation}
\label{eq:boundM} \overline{\mathbb{P}} \bigl({\mathcal{I}}^{u}\text{ is a }
(L,R(1-\eta ),n, \kappa )\text{-good obstacle} \bigr)\geq 1-\exp \biggl\{-
\frac{\Cr{ctwopoint2}u^{\frac{1}{\nu}}R(1-\eta )}{(\log (u^{\frac{1}{\nu}}R(1-\eta ))^{b_{1}})} \biggr\}. 
\end{equation}
We now set $R=d(0,x)$ in what follows and first assume that
$u^{\frac{1}{\nu}}d(0,x) \geq C$. We start with observing that:
\begin{itemize}
\item $d(x,B_{R(1-\eta )+CL})\geq (\eta /2)d(x,0)$ if
$Ra^{\frac{2}{\nu}}$ is large enough, and
\item $d(0,x)^{-\nu}\leq C\tau _{0}^{\mathrm{tr}}(0,x)$ due to
\eqref{eq:intro_Green} and \cite{MR3502602}, Proposition~5.2.
\end{itemize}
As a consequence, we can apply Corollary~\ref{cor:2pointkilled} (with
$R(1-\eta )$ in place of $R$ therein) in order to obtain that, on the event
in the probability of \eqref{eq:boundM}, we can upper bound
%
\begin{equation}
\label{eq:goodBd}
\mathbb{P}_{\widetilde{{\mathcal{V}}}^{u}} \bigl(0\leftrightarrow x
\text{ in }\bigl\{y\in{\widetilde{{\mathcal{V}}}^{u}}:\varphi _{y}\geq 0\bigr\} \bigr)
\leq C\tau _{0}^{\mathrm{tr}}(0,x) \exp  \Big\{ -
\frac{c\kappa n}{L^{\nu}}  \Big\}.
\end{equation}
On the complement of that event, we can use the
$\overline{\mathbb{P}}$-a.s. bound
\begin{equation*}
\mathbb{P}_{\widetilde{{\mathcal{V}}}^{u}} \bigl(0\leftrightarrow x
\text{ in }\bigl\{y\in{\widetilde{{\mathcal{V}}}^{u}}:\varphi _{y}\geq 0\bigr\} \bigr)
\leq Cd(0,x)^{-\nu} \le C\tau _{0}^{\mathrm{tr}}(0,x)
\end{equation*}
(due to \cite{MR3502602}, Proposition~5.2, again). As a consequence, plugging
the choices of $u$, $L$, $n$, and $\kappa $ from before \eqref{eq:boundM} into
the right-hand side of \eqref{eq:goodBd} and combining the previous two
displays with \eqref{eq:boundM} for $\eta =1/2$, one obtains
\eqref{eq:bound2point} when $a^{\frac{2}{\nu}}R\geq C$. Finally, when
$a^{\frac{2}{\nu}}R\leq C$, one can easily check that
\eqref{eq:bound2point} remains valid by monotonicity of
$a\mapsto \tau _{a}^{\mathrm{tr}}(0,x)$ modulo adapting the constant
$\Cr{Ctwopoint}$.
\end{proof}

We now turn to the proof of Corollary~\ref{cor:2pointZ3}, which borrows
techniques from \cite{DrePreRod5}. To ease its reading, we first provide
an overview of the argument, wilfully ignoring certain technical points.
The upper bound in \eqref{eq:2pointZ3} comes as slight strengthening (needed
to obtain the pre-factor $\frac{\pi}{3}$) of the upper bound in Theorem~\ref{thm:2pointUB},
when specializing to the case of $\mathbb{Z}^{3}$. The lower bound in
\eqref{eq:2pointZ3} relies on a change of measure technique (similar to
\cite{DrePreRod5}), which roughly works as follows: to bound
$\tau ^{\mathrm{tr}}_{a}(0,[\lambda \xi e])$ for $a > 0$ from below, one
first forces the clusters of $0$ and $[\lambda \xi e]$ in balls of radius
$\approx \xi $ to have large capacity, the cost of which is commensurate
with $\tau ^{\mathrm{tr}}_{0}(0,[\lambda \xi e])$ (this is not quite true,
cf.~\eqref{eq:proof2point2} below; this forcing is also at the origin of
the $\log \log \xi $ term appearing in \eqref{eq:2pointZ3}). After conditioning
on these two clusters, in the remaining region between the balls, one ``aids''
the field by shifting the measure harmonically, as for the excursion sets
of interest to be $\geq -\eta a$ everywhere, which is slightly super-critical.
Under the shifted measure, the interlacement trajectories present in the
associated random walk picture at the shifted level then produce the remaining
connection with high probability; compare~\eqref{eq:proof2point4} below;
in particular, these trajectories hit the two clusters in question owing
to their large capacity. The entropic cost resulting from the shift gives
rise to the lower bound in \eqref{eq:2pointZ3}.

\begin{proof}[Proof of Corollary~\ref{cor:2pointZ3}]
Throughout this proof we assume that $d(x,y)=|x-y|_{2}$ is the Euclidian
distance on $\mathbb{Z}^{3}$, and similarly as before we can assume without
loss of generality that $a>0$. Furthermore, while $\nu =1$ in the current
setting, we still write $\nu $ for enhanced comparison with the general
setting of \cite{DrePreRod5}, which we frequently refer to in the sequel.
For the upper bound, one notices that the constant $\Cr{ctwopoint2}$ from
\eqref{eq:boundM} can be taken equal to $(1-\eta )\pi /3$ when
$Ru^{\frac{1}{\nu}}$ is large enough since by definition it is equal to
$(1-\eta )$ times the constant $c_{38}=c_{4}$ from
\cite{prevost2023passage}, Theorem~5.4, and that $c_{4}=\pi /3$ on
$\mathbb{Z}^{3}$ by \cite{prevost2023passage}, (2.6). Noting that when
$\lambda $ is large enough, then \eqref{eq:boundM} with the choices
$R=\lambda \xi =\lambda a^{-\frac{2}{\nu}}$ and $u=a^{2}/2$ constitutes
the main contribution to the bound \eqref{eq:bound2point} with
$R=d(x,y)$, one readily concludes the upper bound after a change of variable
for $\eta $.

For the lower bound, we follow a strategy similar to the proof of
\cite{DrePreRod5}, (8.3), which, however, requires an adaptation to obtain
the exact constant $\pi /6$ in \eqref{eq:2pointZ3} that we are now going
to detail. Let $a>0$, $R=\lambda \xi $,
${h}_{K\cup K'}(x)=P_{x}(H_{K\cup K'}<\infty )$ for
$K,K'\subset \widetilde{\mathcal{G}}$ and
\begin{equation*}
A\bigl(K,K',a\bigr)=  \bigl\{K\leftrightarrow K' \text{ in } \bigl\{x\in{
\widetilde{\mathcal{G}}\setminus \bigl(K\cup K'\bigr)}: \varphi _{x}\geq a\bigl(1-
\eta -{h}_{K\cup K'}(x)\bigr) \bigr\}  \bigr\},
\end{equation*}
where for $K$ and $K'$ connected, $K\leftrightarrow K'$ in $A$ means that
$K\cup K'\cup A$ is a connected set in $\widetilde{\mathcal{G}}$. Then
similarly as in \cite{DrePreRod5}, (6.17), it follows from the Markov property
of the Gaussian free field that, for all $s,t>0$,
%
\begin{equation}
\label{eq:proof2point1} %
\begin{aligned} &\tau ^{\mathrm{tr}}_{(1-\eta )a}
\bigl(0,[Re] \bigr)
\\
&\quad \geq \mathbb{E} \bigl[1 \bigl\{\operatorname{cap} \bigl( \widetilde{
\mathcal{K}}_{s\xi}^{a}(0) \bigr)\geq t\xi
^{\nu},\operatorname{cap} \bigl( \widetilde{\mathcal{K}}_{s\xi}^{a}(R)
\bigr)\geq t\xi ^{\nu} \bigr\} \mathbb{P}_{\mathcal{U}} \bigl(A
\bigl(\widetilde{\mathcal{K}}_{s\xi}^{a}(0), \widetilde{
\mathcal{K}}_{s\xi}^{a}(R),a \bigr) \bigr) \bigr],
\end{aligned} %
\end{equation}
where $\widetilde{\mathcal{K}}^{a}_{s\xi}(0)$, respectively,
$\widetilde{\mathcal{K}}_{s\xi}^{a}(R)$, denotes the cluster of $0$, respectively,
$[Re]$, in $\{x\in{B(0,{s\xi})}:\varphi _{x}\geq a\}$, respectively,
$\{x\in{B([Re],{s\xi})}:\varphi _{x}\geq a\}$, and
$\mathcal{U}=\widetilde{\mathcal{G}}\setminus (
\widetilde{\mathcal{K}}_{s\xi}^{a}(0)\cup \widetilde{\mathcal{K}}_{s
\xi}^{a}(R))$. One can control the first term on the right-hand side of
\eqref{eq:proof2point1} by the FKG inequality and an adaptation of the
proof of \cite{DrePreRod5}, Lemma~6.2, which implies that there exists
$\Cl[c]{cts}>0$ such that, for all $0\leq a\leq c$,
%
\begin{equation}
\label{eq:proof2point2} \mathbb{P} \bigl(\operatorname{cap} \bigl(\widetilde{
\mathcal{K}}_{s\xi}^{a}(0) \bigr) \geq
\Cr{cts}s^{\nu}\xi ^{\nu},\operatorname{cap} \bigl(
\widetilde{\mathcal{K}}_{s
\xi}^{a}(R) \bigr)\geq
\Cr{cts}s^{\nu}\xi ^{\nu} \bigr)\geq c\xi
^{-\nu}q( \xi )^{-2}\exp \bigl\{-Cq(\xi ) \bigr\},
\end{equation}
for some constants $c=c(s)>0$ and $C=C(s)<\infty $. Let us now bound the
probability on the right-hand side of \eqref{eq:proof2point1} when
$t=\Cr{cts}s^{\nu}$. For $M\geq 1$ to be chosen later, we abbreviate
$\ell =M\xi (\log (R/\xi ))^{(2\nu +1)/\nu}$ as in
\cite{DrePreRod5}, (6.19), write $\Cl[c]{c13}$ for the constant which is
equal to the constant $c_{13}$ from \cite{DrePreRod5}, Theorem~5.1, recall
the definition of the balls
$\widetilde{B}(x,L)\subset \widetilde{\mathcal{G}}$ from below
\eqref{eq:LS-D}, and denote by
%
\begin{equation}
\label{eq:defLell} \mathcal{L}_{\ell}\stackrel{\mathrm{def.}} {=}
\bigcup_{i=0}^{\lceil 2R/
\ell \rceil }\widetilde{B} \bigl(
\bigl[(i\ell /2)e \bigr],\Cr{c13}\ell \bigr), 
\end{equation}
the set which corresponds to the one introduced in
\cite{DrePreRod5}, (6.13) (the set in \eqref{eq:defLell} should not be
confused with the loop soup which plays no role here). Similarly as above
\cite{DrePreRod5}, (8.2), let
\begin{align*}
\mathcal{L}'_{\ell}&\stackrel{\mathrm{def.}} {=}
\mathcal{L}_{\ell} \cup \widetilde{B} \bigl(0,\sigma '
\ell \bigr)\cup \widetilde{B} \bigl([Re],\sigma ' \ell \bigr),
\\
\mathcal{L}''_{\ell}&\stackrel{
\mathrm{def.}} {=}\mathcal{L}'_{\ell} \setminus \bigl(
\widetilde{B}(0,\sigma \xi )\cup \widetilde{B} \bigl([Re], \sigma \xi \bigr)
\bigr),
\end{align*}
where $\sigma '\geq \Cr{c13}$ and $\sigma \geq s$ are constants we will
fix later. For $K\subset \widetilde{B}(0,s\xi )$ and
$K'\subset \widetilde{B}([Re],s\xi )$, abbreviating
$U=\widetilde{\mathcal{G}}\setminus (K\cup K')$, let us also denote by
$\mathbb{P}_{U}^{a,\ell}$ the law of
$(\varphi _{x}+a\overline{h}_{\ell}(x))_{x\in{U}}$ under
$\mathbb{P}_{U}$; see above Corollary~\ref{cor:2pointkilled}, where
$\overline{h}_{\ell}(x)=P_{x}(H_{\mathcal{L}''_{\ell}}<H_{K\cup K'})$.
Then similarly as in \cite{DrePreRod5}, (6.17), by a change of measure,
one has that
%
\begin{equation}
\label{eq:proof2point3} \mathbb{P}_{U} \bigl(A \bigl(K,K',a
\bigr) \bigr)\geq \mathbb{P}_{U}^{a,\ell} \bigl(A
\bigl(K,K',a \bigr) \bigr)\exp \biggl\{- \frac{a^{2}\operatorname{cap}_{U}(\mathcal{L}_{\ell}'')+1/e}{2\mathbb{P}_{U}^{a,\ell}(A(K,K',a))}
\biggr\}, 
\end{equation}
where $\operatorname{cap}_{U}(\mathcal{L}''_{\ell})$ denotes the capacity of
$\mathcal{L}''_{\ell}$ associated to the diffusion $X$ on
$\widetilde{\mathcal{G}}$ killed on hitting $U^{\mathsf{c}}$ (see below
\cite{DrePreRod5}, Corollary~5.2, for a rigorous definition). As we will
soon explain, similarly as in \cite{DrePreRod5}, (8.2), uniformly in
$\sigma \geq s$, one can fix $s=s(\eta )\geq 1$,
$\sigma '\geq \Cr{c13}$ and $M\geq \sigma '$ such that, for all
$K\subset \widetilde{B}(0,{s\xi})$,
$K'\subset \widetilde{B}([Re],{s\xi})$ with
$\operatorname{cap}(K)\geq \Cr{cts}s^{\nu}\xi ^{\nu}$, and
$\operatorname{cap}(K')\geq \Cr{cts}s^{\nu}\xi ^{\nu}$, still abbreviating
$U=\widetilde{\mathcal{G}}\setminus (K\cup K')$, if
$R/\xi \geq C=C(\sigma )$, then
%
\begin{equation}
\label{eq:proof2point4} \mathbb{P}_{U}^{a,\ell} \bigl(A
\bigl(K,K',a \bigr) \bigr)\geq 1-\eta . 
\end{equation}
Moreover, proceeding similarly as in
\cite{DrePreRod5}, (7.7) and~(7.8), one can fix $\sigma =\sigma (s)$ large
enough so that
%
\begin{equation}
\label{eq:proof2point5} \operatorname{cap}_{K\cup K'} \bigl(
\mathcal{L}_{\ell}'' \bigr)\leq
\operatorname{cap} \bigl( \mathcal{L}_{\ell}\cap \mathbb{Z}^{3}
\bigr)+C \bigl(s^{\nu}\xi ^{\nu}+ \bigl(\sigma
' \bigr)^{
\nu}\ell ^{\nu} \bigr)\leq
\frac{\pi}{3}(1+\eta )^{2} \frac{R}{\log (\frac{2R}{\ell})}, 
\end{equation}
where the last equality follows from
\cite{prevost2023passage}, (2.24), with
$n=P=\lceil 2R/\ell \rceil +1$ and $N=R$ therein, and upon taking
$R\geq C\xi $ for some constant $C=C(\eta ,\sigma ',s,M)$. Note that the
constant $C_{13}=C_{4}$ appearing therein is equal to $\pi /3$ on
$\mathbb{Z}^{3}$ by \cite{prevost2023passage}, (2.6), and we refer to
\cite{GRS21}, Lemma~2.2, and \cite{dembo2022capacity}, Lemma~2.1, for similar
statements proved directly on $\mathbb{Z}^{3}$. Combining
\eqref{eq:const-UB-1arm-crit}, \eqref{eq:proof2point1},
\eqref{eq:proof2point2}, \eqref{eq:proof2point3},
\eqref{eq:proof2point4}, and \eqref{eq:proof2point5} and noting that
$\log (\frac{2R}{\ell})\geq (1-\eta )\log (\lambda )$ if $\lambda $ is
large enough by our choices of $R$ and $\ell $, one easily deduces the
lower bound in \eqref{eq:2pointZ3} after a change of variable in
$\eta $ and $a$.

It remains to prove \eqref{eq:proof2point4}. By definition of
$\mathbb{P}_{U}^{a,\ell}$ and $A(K,K',a)$, we have
%
\begin{equation}
\label{eq:proof2point6}
\begin{aligned} \mathbb{P}_{U}^{a,\ell} \bigl(A\bigl(K,K',a\bigr) \bigr)&\geq
\mathbb{P}_{U} \bigl(K\leftrightarrow K' \text{ in } \bigl\{x\in{U}:
\varphi _{x}\geq a\bigl(1-\eta -{h}_{K\cup K'}(x)-\overline{h}_{\ell}(x)\bigr)
\bigr\} \bigr)
\\
&\geq \mathbb{P}_{U}  \bigl(K\leftrightarrow K' \text{ in }\bigl\{x\in{
\mathcal{L}'_{\ell}\cap U}: \varphi _{x}\geq -\eta a\bigr\}  \bigr)
\end{aligned}
%
\end{equation}
since
$h_{K\cup K'}(x)+\overline{h}_{\ell}(x)=P_{x}(H_{K\cup K'}<\infty )+P_{x}(H_{
\mathcal{L}_{\ell}''}<H_{K\cup K'})\geq 1$ for all
$x\in{\mathcal{L}'_{\ell}}$ by definition as long as
$\sigma '\ell >\sigma \xi $, which holds as long as $R/\xi $ is large enough,
depending on $\sigma $, $\sigma '$. For $\sigma '$ and
$M\geq \sigma '$ large enough, depending only on $s$ and $\eta $, one can
bound from below the last probability in \eqref{eq:proof2point6} by
\begin{equation*}
(1-\eta /2) \bigl(1-\exp \bigl\{-Ca^{2}\operatorname{cap}(K) \bigr
\} \bigr) \bigl(1-\exp \bigl\{-Ca^{2} \operatorname{cap}
\bigl(K' \bigr) \bigr\} \bigr)
\end{equation*}
for some constants $c=c(\eta )>0$ and $C=C(\eta )<\infty $ using a reasoning
similar to the proof of \cite{DrePreRod5}, (8.2). Indeed, the proof essentially
consists of defining three independent random interlacements, each at level
$(\eta a)^{2}/6$, whose union is included in
$\{\varphi \geq -\eta a\}$ via the isomorphism theorem, such that the following
events occur: the first interlacement has a trajectory hitting $K$ and
going to $\infty $, the second interlacement has a trajectory hitting
$K'$ and going to $\infty $, and the last interlacement has a connected
component in $\mathcal{L}_{\ell}'\cap U$ which intersects both the first
and second interlacement. The probability of the intersection of the two
first events can be bounded from below by
$(1-\exp \{-Ca^{2}\operatorname{cap}(K)\})(1-\exp \{-Ca^{2}\operatorname{cap}(K')
\})$ owing to \cite{DrePreRod5}, Lemma~7.2, and the probability of the
last one can be lower bounded by $1-\eta /2$ upon taking $\sigma '$ and
$M$ large enough; we refer to \cite{DrePreRod5}, (7.17), (7.20), for as
to why and leave the details to the reader. The inequality
\eqref{eq:proof2point4} then follows readily when
$\operatorname{cap}(K)\geq \Cr{cts}s^{\nu}\xi ^{\nu}$,
$\operatorname{cap}(K')\geq \Cr{cts}s^{\nu}\xi ^{\nu}$, and $s=s(\eta )$ is a
sufficiently large constant.
\end{proof}

\begin{Rk}
\label{rem5.2}
As should be clear from \eqref{eq:proof2point3},
\eqref{eq:proof2point5}, and \eqref{eq:proof2point6} (see also
\cite{prevost2023passage}, Proposition~3.2, for the upper bound), the intuitive
reason one is able to obtain the exact constant $\pi /6$ in Corollary~\ref{cor:2pointZ3}
is that
%
\begin{equation}
\label{eq:equivtwopoint} \log \biggl( \frac{\tau ^{\mathrm{tr}}_{a}(0,[R e])}{\tau ^{\mathrm{tr}}_{0}(0,[R e])} \biggr)\sim -
\frac{a^{2}\operatorname{cap}(\mathcal{L}_{\ell})}{2} 
\end{equation}
as $R\rightarrow \infty $, where $\mathcal{L}_{\ell}$ is a ``tube'' (see
\eqref{eq:defLell}) from $0$ to $Re$ of length $R$ and width $\ell $, and
that by \cite{prevost2023passage}, Lemma~2.1 (see also
\cite{dembo2022capacity}, Lemma~2.1), for a similar statement directly
on $\mathbb{Z}^{3}$,
\begin{equation*}
\operatorname{cap}(\mathcal{L}_{\ell})\sim \frac{\pi R}{3\log (R/\ell )} \sim
\frac{\pi R}{3\log (R/\xi )}
\end{equation*}
as long as $\ell $ is roughly of order $\xi $. One could similarly obtain
an explicit constant for graphs satisfying \eqref{eq:intro_sizeball} and
\eqref{eq:intro_Green} with $\nu \leq 1$ as long as the function
$g(x,y)d(x,y)^{-\nu}$ converges as $d(x,y)\rightarrow \infty $; see
\cite{prevost2023passage}, (2.2) and~Lemma~2.1. However, when
$\nu >1$, for instance, on $\mathbb{Z}^{\alpha}$, $\alpha >3$, even if
\eqref{eq:equivtwopoint} was true with $\ell $ of order $\xi $, one would
not obtain the correct constant anymore as
$\operatorname{cap}(\mathcal{L}_{\ell})\asymp R\ell ^{\nu -1}$ (see
\cite{prevost2023passage}, Lemma~2.1) and thus depends on the exact choice
of the constant $\ell /\xi $, which is a priori not clear. Note that
$\ell $ is actually of order $\xi (\log (r/\xi ))^{C}$ in the proof of
\eqref{eq:lowerboundtau}, hence the additional logarithmic factor therein.
\end{Rk}

We conclude with the following brief:

\begin{proof}[Proof of \eqref{eq:lowerboundtau}]
We start with observing that \cite{DrePreRod5}, (8.3), in combination with
\cite{prevost2023passage}, Remark~6.5,4), entails that, for our choices
of parameters, we have
%
\begin{equation}
\label{eq:lowerboundtauaInvent} \tau _{a/C}^{\mathrm{tr}}(x,y)\geq \xi
^{-\nu}q(\xi )^{-2}\exp \biggl\{-{C}q(\xi )-
\frac{{C}(d(x,y)/\xi )}{(\log (d(x,y)/\xi ))^{b}} \biggr\}, 
\end{equation}
where $q$ is as in Theorem~\ref{T:1arm-crit}, $\xi $ as in
\eqref{eq:defxi}, $b = 1$ if $\nu = 1$, and $b = 1 - \nu $ for
$\nu \in (1,\alpha /2)$. Now, due to \eqref{eq:intro_Green}, we have that
\cite{MR3502602}, Proposition~5.2, entails
$\tau _{0}^{\mathrm{tr}}(x,y) \le Cd(x,y)^{-\nu}$, and we furthermore note
that the term $q(\xi )^{-2}\exp \{- C q(\xi )\}$ appearing in
\eqref{eq:lowerboundtauaInvent} is now negligible by Theorem~\ref{T:1arm-crit},
and our condition on $d(x,y)$; inequality \eqref{eq:lowerboundtau} follows.
\end{proof}

\medskip

{\bf Acknowledgments:}
This research was supported through the programs ``Oberwolfach Research
Fellows'' and ``Oberwolfach Leibniz Fellows'' by the Mathematisches
Forschungs\-institut Oberwolfach in 2023. PFR thanks the Research Institute
for Mathematical Sciences (RIMS), an International Joint Usage/Research
Center located in Kyoto University, for its hospitality.

The research of AD has been supported by the Deutsche Forschungsgemeinschaft
(DFG) Grant DR 1096/2-1.

AP has been supported by the Engineering and Physical
Sciences Research Council (EPSRC) Grant EP/R022615/1, Isaac Newton Trust
(INT) grant G101121, European Research Council (ERC) starting grant 804166
(SPRS), the Swiss NSF, and the Deutsche Forschungsgemeinschaft (DFG)---Projektnummer
552316285.

The research of PFR has been supported by the European Research Council (ERC) under the European Union's Horizon Europe research and innovation programme (grant agreement No 101171046).

\bibliography{bibliographie}
\bibliographystyle{abbrv}

\end{document}